\newtheorem{THM}{Theorem}[section]
\newtheorem{PRP}[THM]{Proposition}
\newtheorem{LEM}[THM]{Lemma}
\newtheorem{CON}[THM]{Conjecture}
\newtheorem{EXA}[THM]{Example}
\newtheorem{COR}[THM]{Corollary}
\newtheorem{DEF}[THM]{Definition}
\newtheorem{COND}[THM]{Condition}
\newcommand{\R}{\mathbb{R}}
\renewcommand{\P}{\mathbb{P}}
\newcommand{\Z}{\mathbb{Z}}
\newcommand{\N}{\mathbb{N}}
\newcommand{\C}{{\bf C}}
\renewcommand{\c}{\mathbb{C}}
\newcommand\1{\mathds{1}}
\newcommand{\indic}[1]{\1_{\{#1\}}}
\newcommand{\sss}{\scriptscriptstyle}
\newcommand{\mc}[1]{\mathcal{#1}}
\def\ra{\rightarrow}
\newcommand\Qed{$\qed$ \medskip}
\def\re{\textnormal {Re}}
\def\im{\textnormal {Im}}
\newcommand{\blank}[1]{}
\newcommand{\FIGS}[1]{#1}
    \def\d{{\textnormal d}}
 \def\beq{\begin{eqnarray}}
    \def\eeq{\end{eqnarray}}
    \def\beqq{\begin{eqnarray*}}
    \def\eeqq{\end{eqnarray*}}
\newcommand{\nn}{\nonumber}
\newcommand{\Aa}{\mc{A}_{\alpha}}
\newcommand{\Ea}{\mc{E}_{\alpha}}
\newcommand{\Sa}{\mc{S}_{\alpha}}
\newcommand{\adj}{\mathrm{adj}}
\newcommand{\e}{{\mathrm e}}
\title{Strongly reinforced P\'olya urns with graph-based competition}
\author{
{Remco van der Hofstad
\footnote{Department of Mathematics and Computer Science, Eindhoven University of Technology, the Netherlands. \newline
E-mail: rhofstad@win.tue.nl}} \; ,
{Mark Holmes
\footnote{Department of Statistics, University of Auckland, New Zealand. Email:  mholmes@stat.auckland.ac.nz}} \;,
{Alexey Kuznetsov
\footnote{Department of Mathematics and Statistics, 
York University, Canada. Email: kuznetsov@mathstat.yorku.ca}}\;, 
and 
{Wioletta Ruszel
\footnote{Department of Applied Mathematics, Delft University of Technology, the Netherlands. Email: W.M.Ruszel@tudelft.nl}} 
}
\begin{document}
\maketitle
\abstract{We introduce a class of reinforcement models where, at each time step $t$, one first chooses a random subset $A_t$ of colours (independent of the past) from $n$ colours of balls, and then chooses a colour $i$ from this subset with probability proportional to the number of balls of colour $i$ in the urn raised to the power $\alpha>1$.  We consider stability of equilibria for such models and establish the existence of phase transitions in a number of examples, including when the colours are the edges of a graph, a context which is a toy model for the formation and reinforcement of neural connections.}
{\vskip 0.15cm}
 \noindent {\it Keywords}: reinforcement model, P\'olya urn,  stochastic approximation algorithm, stable equilibria 
{\vskip 0.15cm}
 \noindent {\it 2010 Mathematics Subject Classification }: Primary: 60K35. Secondary: 37C10.

\tableofcontents

\newpage

\section{Introduction}
\label{sec:intro}
Random processes with reinforcement have been studied mathematically since at least the early 1900s, and have connections to applied problems such as the design of clinical trials, and the formation of networks such as neural networks, the Internet, and social networks.  One of the most simple and elegant of these models is known as {\em P\'olya's urn}:  starting with one black and one red ball in an urn, select a ball at random from the urn, replace it, and add another of the same colour.  The proportion $X_t$ of black balls in the urn after $t$ balls have been added is a bounded Martingale, and has a discrete uniform distribution for each $t$, whence there is a random variable $X\sim U[0,1]$ such that $\P(X_t\ra X)=1$.  Various generalisations of this model have been studied in the last hundred years or so, see e.g.~\cite{Mah09,Pem07}.  In recent times, reinforced random walks and preferential attachment models continue to be studied extensively.

One direction of generalisation of P\'olya's urn is to modify this selection probability (the probability of selecting a ball of a given colour).  Fix $W:\mathbb{N}\ra (0,\infty)$, and if $N_t^{\sss (i)}$ is the number of balls of colour $i$ in the urn at time $t$, then at time $t+$ we select a ball of colour $i$ from the urn with probability $W(N_t^{\sss (i)})/\sum_j W(N_t^{\sss (j)})$.   In P\'olya's urn, there are two colours and $W(x)=x$.  A beautiful construction due to Rubin \cite{Dav90} shows that if $\sum_{x=1}^{\infty}W(x)^{-1}<\infty$ (sometimes called the \emph{strong} reinforcement regime) then only one colour is chosen infinitely often.  Otherwise each colour is chosen infinitely often, and if $W$ grows sufficiently slowly (e.g.~$W(x)=x^{\alpha}$ for some $\alpha\in (0,1)$) then the proportions of each colour are equal in the limit.  

A further direction of generalisation involves having multiple interacting urns, where colours may be present in more than one urn and where multiple balls may be added to one or more urns depending on what colour is selected.  See for example the PhD Thesis (and related papers) of Launay \cite{LaunayThesis,Launay1,Launay2}, and recent work of Launay and Limic \cite{LL12}, and Benaim and coauthors \cite{BRS12,BBCL12}.  In such settings colours may not be competing with each other on every iteration of the process, and Rubin's construction need not apply.

\subsection{Our models}
\label{sec:model}
Consider the following simplistic model for the reinforcement of neural connections in the brain:  A signal enters the brain at some (randomly) chosen neuron and is transmitted to a (random) single neighbouring neuron with probability depending on the relative efficiency of the synapses connecting the neurons, and in doing so the efficiency of the utilized synapse is improved/reinforced.  We are interested in the structures and relative efficiency of neural networks that can arise from repeating this process a very large number of times, in a strong reinforcement regime.

With this motivation, we consider a large class of ``interacting urn"-type models that we have not found in the literature.  Suppose that we have $n$ colours of balls.  Let $N_{t}^{\sss (i)}$ be the number of balls of colour $i$ in our ``urn" at time $t\in \Z_+$, and assume that $N_{0}^{\sss (i)}=1$ for each $i\in [n]=\{1,2,\dots, n\}$.  The process $\vec{N}_t=(N_{t}^{\sss (i)}:i\in [n])$ evolves as follows.  At time $t\in \N$ we choose a subset $A_t \subset [n]$ independently of $\mc{F}_{t-1}=\sigma\{A_{s},(N_{s}^{\sss (i)}:i\in [n])\}_{0\le s\le t-1}$, according to some law.  We then select a colour $i$ from the balls of colours in $A_t$ according to their current weights in the urn, i.e., given $A_t$, we select a ball of colour $i\in A_t$ with probability
\begin{align}
\frac{W(N_{t-1}^{\sss (i)})}{\sum_{j \in A_t}W(N_{t-1}^{\sss (j)})}\label{ball_select}
\end{align}
then we replace that ball and add another of the same colour, so that $N_t^{\sss (j)}=N_{t-1}^{\sss (j)}+\indic{j=i}$.  For a fixed $n$, the law of such a model is then completely specified by the function $W$ and the law of $A_1$, so we will refer to any such model as a $(W,A)$-Reinforcement Model, or simply a {\em WARM}.

In \cite{HK_Exp_urn} we consider the case 
$W(x)=\e^{\gamma x}$ for some fixed $\gamma >0$.  In this paper our results will be for reinforcement functions $W:\N\ra (0,\infty)$ of the following kind: 
\medskip

{\noindent \bf Condition $(\alpha)$:} $W(x)=x^{\alpha}$ for some fixed $\alpha>1$.

\medskip

We will also assume the following condition. 
\begin{COND}[Subset selection]
\label{cond:iid}
The subsets $(A_t)_{t \ge 0}$ are i.i.d.~with $p_{\varnothing}=0$, where $p_A\equiv \P(A_t=A)$.
\end{COND} 

We are interested in the random vectors  $\vec{X}_t=\vec{N}_t/(t+n)$ of proportions of balls of each colour,
 and more precisely their limits as $t\ra \infty$.  
Any model with $p_{\varnothing}\in (0,1)$ can be considered as a random time change of a model with $p_{\varnothing}=0$, which does not affect the possible limits of $\vec{X}_t$.  Thus we lose nothing in assuming that $p_{\varnothing}=0$ in Condition \ref{cond:iid}.
For models with plenty of symmetry in terms of the colour labellings, we may instead consider the ordered vector $[\vec{X}_t]$, having the same elements as $\vec{X}_t$, but listed in decreasing order. 
Most of our examples satisfy the following symmetry property, which implies that $\P(|A_1|=m)=nm^{-1}a_mp_m$:
\begin{COND}[Symmetry]
\label{cond:symmetry}
There exist $(p_\ell)_{\ell=1}^n$ and $(a_\ell)_{\ell=1}^n$ such that for every $m\ge 0$,
\begin{itemize}
\item[(i)] $p_A\in \{0,p_m\}$ whenever $|A|=m$, and
\item[(ii)] $\#\{A\ni i:|A|=m, p_A=p_m\}=a_m$  for every $i\in [n]$.
\end{itemize}
\end{COND}

Condition \ref{cond:symmetry} is somewhat unpalatable, so let us point out that many of the models considered in this paper satisfy the following stronger symmetry property, which implies that $\P(|A_1|=m)={n \choose m}p_m$, and also that (almost surely) at least $n-\underline{m}+1$ colours are chosen a positive proportion of the time, where $\underline{m}=\min\{m\ge 1:p_m>0\}$.
\begin{COND}[Strong symmetry]
\label{cond:strong_symmetry}
There exist $(p_i)_{i=1}^n$ such that $p_A=p_m$ whenever $|A|=m$.
\end{COND}
We are primarily interested in the setting where the colours $[n]$ are the edges (synapses) of a connected graph $G$ (brain) with $n_v$ vertices (neurons).  In this setting we will assume the following.
\begin{COND}
\label{cond:Vuniform}
$V_t$ is chosen uniformly at random from the vertices of $G$, and $A_t$ is the set of edges incident to $V_t$.
\end{COND}
WARMs where the law of $A_1$ corresponds to Condition \ref{cond:Vuniform} on some graph $G$ will be called {\em WARM graphs}.  When $G$ is specified the WARM will be called a {\em WARM (on) $G$}.

\subsection{Examples}
\label{sec:examples}
We begin with two WARMs that are in general not WARM graphs.
\begin{EXA}[Uniform, fixed $m$]
\label{exa:fixedm}
Fix $m \in [n]$ (the model becomes relatively trivial when $m=1$ or $m=n$) and choose $A_t$ with $|A_t|=m$ uniformly at random from $[n]$.  Then $|A_t|=m$ almost surely and $\P(A_t=A)=m!(n-m)!/n!$ when $|A|=m$.  This is the special case of Condition \ref{cond:strong_symmetry} with $p_r=0$ for all $r\ne m$.  At least $n-m+1$ colours are each chosen a positive proportion of the time.  
\end{EXA}

\begin{EXA}[Bernoulli$(p)$]
\label{exa:bernoulli}
Fix $p \in (0,1)$, and independently choose each colour to be in $A_t$ with probability $p$.  After a parameter change (due to $p_{\varnothing}=(1-p)^n>0$), this is the special case of Condition \ref{cond:strong_symmetry} with $p_m=p^m(1-p)^{n-m}(1-p_{\varnothing})^{-1}$ for all $m\ge 1$. All $n$ colours are chosen a positive proportion of the time.  
\end{EXA}
A natural extension of Example \ref{exa:bernoulli} would be to have a different $p$ for each colour.  Turning to WARM graphs (i.e., assuming Condition \ref{cond:Vuniform} hereafter), observe that the special case of Example \ref{exa:bernoulli} with $n=2$ and $p=1/2$ is the same as the WARM on the star-graph on 2 edges.
\begin{EXA}[WARM Star graph]
\label{exa:star_graph}
Let $G$ be the star-graph on $n_v=n+1$ vertices consisting of a central vertex connected by $n$ edges to $n$ leaves (vertices of degree 1).  
Then the WARM on $G$ is the special case of Condition \ref{cond:strong_symmetry} with 
$p_1=p_n=1/(n+1)$ and $p_m=0$ otherwise.
\end{EXA}

In the next two examples, $G$ is regular with degree $d=d(n)$ (so $|A_t|=d$ almost surely),  so the WARM on $G$ satisfies Condition \ref{cond:symmetry} with $p_A=0$ if $|A|\ne d$, and with $p_{d}=1/n_v$ and $a_{d}=2$ since any one of the $n_v$ vertices is equally likely to be $V_t$ and every edge is incident to 2 vertices.  On the other hand there exist subsets of size $d$ that are chosen with probability 0 (so Condition \ref{cond:strong_symmetry} is not satisfied).

\begin{EXA}[WARM Cycle graph]
\label{exa:circle}
Let $G$ be the cycle graph with $n$ edges and $n$ vertices. Each vertex is of degree $d=2$.
\end{EXA}

\begin{EXA}[WARM Complete graph]
\label{exa:complete_graph}
Let $G$ be the complete graph on $n_v$ vertices, with $n=n_v(n_v-1)/2$ edges.  Each vertex is of degree $d=n_v-1$.
\end{EXA}
Note that Examples \ref{exa:complete_graph}, \ref{exa:circle}, and \ref{exa:fixedm} (with $m=2$) are all identical when $n=3$, and correspond to the WARM {\em triangle graph} which is studied extensively in Section \ref{sec:triangle}.  All of the above examples satisfy the symmetry property Condition \ref{cond:symmetry}.
Let us now give a simple example that does not satisfy Condition \ref{cond:symmetry}(ii).
\begin{EXA}[WARM Line/Path graph]
\label{exa:line}
Let $G$ be the line segment with $n$ edges (and $n+1$ vertices).  The two leaves have degree 1, while all interior vertices have degree 2.
\end{EXA}
 Star graphs and the line graph with $n=3$ are special cases of {\em whisker graphs} (which also fails to satisfy Condition \ref{cond:symmetry} in general) defined as follows. 
\begin{EXA}[WARM Whisker graph]
\label{exa:whisker}
A whisker graph is defined as a tree with a diameter less than or equal to three. If the diameter is equal to two, then we obtain a star graph. If the diameter is equal to three, then we have a graph consisting of a distinguished edge $e$ with $r\ge 1$ leaves incident to one endvertex of $e$ and $s=n-(r+1)\ge 1$ leaves incident to the other endvertex (i.e.~$G$ is constructed by connecting two star graphs by a single edge, $e$).  
\end{EXA} 
We believe that whisker graphs play a central role in the graph setting (see Conjecture \ref{con:whisker-forest}).

\subsection{Linearly stable equilibria}
\label{sec:stable}
For fixed $n$ and $\vec{v}\in \Delta_n\equiv\{\vec{u}\in \R^n:u_i\ge 0, \sum_{i=1}^n u_i=1\}$, let $F\colon \Delta_n\ra \R^n$ be defined (for a given WARM) by 
\begin{align}
F(\vec{v})_i=-v_i+\lim_{t \ra \infty}\sum_{A\ni i}p_A\frac{W(v_it)}{\sum_{j \in A}W(v_jt)}.\label{Fdef}
\end{align}

\begin{DEF}[Equilibrium]
\label{def:equilibrium}
For fixed $n$, a vector $\vec{v}\in \Delta_n$ is an {\em equilibrium distribution} for the WARM if $F(\vec{v})=\vec{0}$.
We let $\mc{E}$ denote the set of equilibria for a given WARM, and write $\mc{E}_{\alpha}=\mc{E}$ when Condition $(\alpha)$ holds.
\end{DEF}
Intuitively this says that in the limit as $t \ra \infty$, the proportion of balls of colour $i$ in the urn is equal to the probability that the next selected ball is of colour $i$.  To see that the term inside the limit in \eqref{Fdef} sums to 1, observe that
\begin{align}
\sum_{i=1}^n\sum_{A\ni i}p_A\frac{W(v_it)}{\sum_{j \in A}W(v_jt)}=\sum_{A \ne \varnothing}\sum_{i \in A}p_A\frac{W(v_it)}{\sum_{j \in A}W(v_jt)}=\sum_{A \ne \varnothing}p_A=1.\nn
\end{align}
Note that when $W(x)=x^{\alpha}$, $F(\vec{v})=\vec{0}$  reduces to 
\begin{align}
v_i=&\sum_{A\ni i}p_A\cdot \frac{v_i^{\alpha}}{\sum_{j \in A}v_j^{\alpha}}.\label{equil_alpha}
\end{align}

 Let the partial derivatives of $F$ at $\vec{v}$ be denoted by $D_{i,k}=\partial F(\vec{v})_i/\partial v_k$, whenever these quantities exist, and let ${\bf D}(\vec{v})$ denote the matrix with $(i,k)$ entry $D_{i,k}$ evaluated at the point $\vec{v}$.  
\begin{DEF}[Stable equilibrium]
\label{stable}
An equilibrium distribution $\vec{v}$ (i.e.~satisfying \eqref{equil_alpha}) is a {\em linearly-stable equilibrium} if all eigenvalues of ${\bf D}(\vec{v})$ have negative real parts,  {\em linearly-unstable equilibrium} if some eigenvalue of ${\bf D}(\vec{v})$ has positive real part, and {\em critical} otherwise.  
 Let $\mc{S}$ denote the set of linearly-stable equilibria for a given WARM, and write $\mc{S}_{\alpha}=\mc{S}$ when Condition $(\alpha)$ holds.  
\end{DEF}
For a given WARM, let $\mc{A}$(=$\mc{A}_{\alpha}$ when Condition $(\alpha)$ holds) denote the (random, nonempty) set of accumulation points of the sequence $\vec{X}_t$.  The main reason that we are interested in linearly-stable equilibria is because of the following theorem (and conjecture) whose proof (see Appendix \ref{sec:appendix}) relies on Theorem \ref{thm:number_equi} below together with the general theory of the dynamical system approach to studying stochastic approximation algorithms, established by Bena\"\i m and coauthors.  See for example \cite[Proposition 3.5, Theorem 3.9,  Theorem 3.11]{BRS12}.  
\begin{THM}[Accumulation structure]
\label{thm:convergence}
Assume Conditions $(\alpha)$ and \ref{cond:iid}.    Then 
\begin{enumerate}
\item[(i)] almost surely $\Aa\subset \Ea$ and $\Aa$ is a connected subset of $\Delta_n$,
\item[(ii)] $\P(\vec{X}_t\rightarrow \vec{v})>0$ for every $\vec{v}\in \Sa$.
\end{enumerate}
\end{THM}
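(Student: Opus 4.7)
The plan is to present $\vec X_t$ as a Robbins--Monro stochastic approximation on $\Delta_n$ and then invoke the dynamical-systems approach developed by Bena\"\i m and coauthors. Writing $I_{t+1}$ for the colour selected at step $t+1$, the recursion
\[
\vec X_{t+1}-\vec X_t \;=\; \gamma_{t+1}\bigl(\vec e_{I_{t+1}}-\vec X_t\bigr)
\;=\; \gamma_{t+1}\bigl(F(\vec X_t)+\xi_{t+1}\bigr),
\qquad \gamma_{t+1}=\tfrac{1}{t+n+1},
\]
decomposes into the mean field $F$ of \eqref{Fdef} (the $t\to\infty$ limit being trivial by the $\alpha$-homogeneity of $W(x)=x^\alpha$) and a bounded $\mc{F}_{t+1}$-martingale difference $\xi_{t+1}=\vec e_{I_{t+1}}-\pi(\vec X_t)$, where $\pi_i(\vec v)=\sum_{A\ni i}p_A v_i^\alpha/\sum_{j\in A}v_j^\alpha$. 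Since $\sum_t \gamma_t=\infty$, $\sum_t\gamma_t^2<\infty$, and $\|\xi_{t+1}\|\le 2$, the standard input conditions for Bena\"\i m's theory hold, at least on the relative interior of $\Delta_n$ where $F$ is smooth.

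For part (i), \cite[Proposition 3.5]{BRS12} then yields that $\Aa$ is a.s.\ a nonempty, compact, connected, internally chain transitive (ICT) set for the semi-flow induced by $\dot{\vec v}=F(\vec v)$. Connectedness is immediate, so the substantive task is to show $\Aa\subset \Ea$, i.e.\ that every ICT set of this flow is contained in the equilibrium set. One route is to exhibit a strict Lyapunov function for $F$; a candidate suggested by \eqref{equil_alpha} is
\[
L(\vec v)\;=\;-\tfrac{1}{\alpha-1}\sum_A p_A\log\!\Bigl(\sum_{j\in A}v_j^\alpha\Bigr),
\]
for which one would verify $\nabla L\cdot F\le 0$ with equality precisely on $\Ea$. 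Once such a function is available, every ICT set is contained in the critical set $\Ea$, and Theorem \ref{thm:number_equi} (providing the structure of equilibria) closes out $\Aa\subset\Ea$. If a single global Lyapunov function is elusive, the backup plan is a face-by-face argument: on each face $\{\vec v\in\Delta_n:\mathrm{supp}(\vec v)=S\}$ the mean field $F$ is smooth; one rules out non-equilibrium invariant sets there using the connectedness of $\Aa$ and the finiteness of equilibria from Theorem \ref{thm:number_equi}, and then pieces the faces together.

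For part (ii), the plan is to invoke an attraction / positive-probability-of-convergence result for linearly stable equilibria, such as \cite[Theorem 3.11]{BRS12}. If $\vec v\in\Sa$ lies in the relative interior of the face of $\Delta_n$ carrying $\mathrm{supp}(\vec v)$, then $F$ is smooth at $\vec v$, the spectrum of ${\bf D}(\vec v)$ lies in the open left half-plane by assumption, and the noise $\xi_{t+1}$ is bounded, so such a theorem applied on that face gives $\P(\vec X_t\to\vec v)>0$ as soon as $\vec X_t$ enters a sufficiently small neighbourhood of $\vec v$ with positive probability. The latter is a short explicit argument: under Condition \ref{cond:iid} an appropriate finite sequence of colour selections has positive probability and can drive the ratios $\vec X_t$ into any prescribed open subset of the relevant face, after which local attraction takes over.

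The main obstacle I expect is the lack of smoothness---in fact of continuity---of $F$ on the full boundary of $\Delta_n$ at points where some $A$ with $p_A>0$ has all of its coordinates vanishing, so that Bena\"\i m's black-box theorems have to be applied with localisation care (either on the open simplex, or separately on each face). The real content of the argument, beyond invocation of those theorems, is the verification that ICT sets of $F$ lie in $\Ea$: this is where Theorem \ref{thm:number_equi} on the structure of $\Ea$, combined with either a Lyapunov function or a face-by-face analysis, does the actual work.
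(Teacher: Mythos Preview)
Your approach is essentially the paper's: write $\vec X_t$ as a stochastic approximation, exhibit a strict Lyapunov function built from $\sum_A p_A\log\bigl(\sum_{j\in A}v_j^\alpha\bigr)$ (your candidate agrees with the paper's up to an additive constant on $\Delta_n$ and a sign), and invoke Bena\"\i m's limit-set theorem; for (ii) the paper simply cites \cite[Proposition 7.5]{Benaim1999}, so your reachability-plus-local-attraction outline is more than is needed.

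One correction and one sharpening. Your invocation of Theorem \ref{thm:number_equi} is misplaced: that result says $\Sa$ is finite, not $\Ea$, and it is not what closes the argument. A strict Lyapunov function alone does not force ICT sets into $\Ea$; what is actually required, and what you gloss over, is that $L(\Ea)\subset\R$ have empty interior. The paper obtains this via Sard's theorem applied face-by-face on the strata $\Delta_S=\{\vec v:v_i=0\text{ iff }i\notin S\}$, where $L$ is $C^\infty$---so your ``backup'' face-by-face plan is the real mechanism, not an alternative. For the boundary singularity you correctly flag, the paper's concrete fix is to work on the compact $\Delta=\{\vec v\in\Delta_n:\sum_{i\in A}v_i\ge c\text{ for every }A\text{ with }p_A>0\}$ with $c=\tfrac12\min\{p_A:p_A>0\}$, which is positively invariant for the ODE and on which $F$ is Lipschitz and $L$ is smooth.
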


It follows from Theorem \ref{thm:convergence}(i) that if $|\Ea|<\infty$ then ($|\Aa|=1$ almost surely so) $\vec{X}_t$ converges almost surely.  Moreover, if $|\Ea|=1$ then $\vec{X}_t$ converges almost surely to this unique equilibrium.  We shall see that when $n=2$ and $\alpha=3$ in Example \ref{exa:star_graph} there is a unique equilibrium ($\Ea=\{(1/2,1/2)\}$, whence $\vec{X}_t$ almost surely converges to $(1/2,1/2)$) that is not linearly stable ($\Sa=\varnothing$).
\begin{CON}[Convergence to equlibirum]
\label{con:non-convergence}
For any WARM with $\alpha>1$, there exists a random vector $\vec{X}=(X_1,\dots,X_n)$, supported on the set of linearly-stable and critical equilibria such that $\P(\vec{X}_t \ra \vec{X})=1$.
\end{CON}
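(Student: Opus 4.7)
The plan is to combine Theorem \ref{thm:convergence} with two additional ingredients: a finiteness property of the equilibrium set $\Ea$ (to obtain $|\Aa|=1$ and hence a.s.\ convergence of $\vec{X}_t$) and a Pemantle--Bena\"\i m-type non-convergence theorem (to exclude linearly unstable equilibria as limits). First I would apply Theorem \ref{thm:number_equi} to conclude $|\Ea|<\infty$ for any WARM with $\alpha>1$. Since by Theorem \ref{thm:convergence}(i) $\Aa$ is a connected subset of $\Ea$, finiteness forces $|\Aa|=1$ almost surely, and calling its (random) element $\vec{X}$ produces $\P(\vec{X}_t\ra\vec{X})=1$ with $\vec{X}\in\Ea$.

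To show that $\vec{X}$ is in fact supported on $\Sa$ together with the critical equilibria, I would write the dynamics in standard stochastic approximation form
\begin{equation*}
\vec{X}_t-\vec{X}_{t-1}=\frac{1}{t+n}\bigl(F(\vec{X}_{t-1})+\vec{U}_t\bigr),
\end{equation*}
where $\vec{U}_t=e_{\xi_t}-\E[e_{\xi_t}\mid\mc{F}_{t-1}]$ is a bounded $\mc{F}_t$-martingale difference and $\xi_t$ denotes the colour selected at time $t$. Under Condition $(\alpha)$ the limit in \eqref{Fdef} is already attained at every finite $t$ because $W$ is homogeneous, so no remainder term is required. For each $\vec{v}\in\Ea$ at which ${\bf D}(\vec{v})$ has an eigenvalue with positive real part I would apply the non-convergence theorem of \cite[Proposition 3.5]{BRS12} (cf.\ \cite[Ch.~2]{Pem07}) to obtain $\P(\vec{X}_t\ra\vec{v})=0$; the essential hypothesis is a uniform lower bound on the conditional covariance of $\vec{U}_t$ along the unstable subspace $E^u$ of ${\bf D}(\vec{v})$ in a neighbourhood of $\vec{v}$. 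Since $|\Ea|<\infty$, a union bound over such $\vec{v}$ concludes the proof.

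The main obstacle is verifying this non-degeneracy when $\vec{v}\in\partial\Delta_n$. If $v_i=0$ for some $i$, then colour $i$ is selected with probability vanishing as $\vec{X}_t\ra\vec{v}$, so the component $U_t^{\sss(i)}$ of the noise degenerates and the standard covariance bound fails along $e_i$. One approach is to show, using the block structure of ${\bf D}(\vec{v})$ inherited from \eqref{equil_alpha} with $\alpha>1$, that $E^u$ is always contained in the coordinate subspace spanned by $\{e_j:v_j>0\}$; on this subspace the noise excitation is uniform in a neighbourhood of $\vec{v}$ and the non-convergence theorem applies directly. Alternatively one could invoke refined non-convergence results (Brandi\`ere--Duflo and successors) that tolerate degenerate noise in directions transverse to $E^u$. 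This boundary analysis is what I expect to be the technical crux and is presumably why the statement is currently phrased as a conjecture rather than a theorem.
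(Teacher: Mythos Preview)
The statement is labelled a \emph{Conjecture} in the paper and is not proved there, so there is no paper proof to compare against. Your proposal is an outline of how one might attack it, and you correctly identify the boundary noise-degeneracy as a genuine obstacle. However, there is a more basic gap earlier in your argument.

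You write ``First I would apply Theorem \ref{thm:number_equi} to conclude $|\Ea|<\infty$''. Theorem \ref{thm:number_equi} only asserts that $\Sa$ is finite, not $\Ea$. Its proof shows that equilibria with all coordinates positive are bounded away from the boundary, and that those with $\det{\bf D}(\vec{v})\ne 0$ are isolated; this handles linearly stable (and generic linearly unstable) equilibria, but says nothing about critical equilibria, which could in principle form a continuum. The paper is explicit about this gap: immediately after Theorem \ref{thm:convergence} it notes only the conditional statement ``if $|\Ea|<\infty$ then $\vec{X}_t$ converges almost surely'', and the example following (star graph, $n=2$, $\alpha=3$) shows that $\Sa$ can be empty while $\Ea$ is not. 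Without $|\Ea|<\infty$, connectedness of $\Aa$ does not force $|\Aa|=1$, so your route to almost-sure convergence of $\vec{X}_t$ is blocked before you ever reach the non-convergence step.

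In short, the two missing ingredients are (i) finiteness of $\Ea$ (or some substitute ruling out convergence to a nontrivial connected set of critical equilibria) and (ii) the boundary non-convergence analysis you already flagged. Neither is supplied by the results of the paper, which is precisely why the statement remains a conjecture.
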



\subsection{Main results}\label{sec:main results}
Our main results describe the set $\Sa$ of linearly-stable equilibria in various situations, and hence (assuming Conjecture \ref{con:non-convergence}) the possible limiting proportions of balls of each colour.  
In particular we are interested in phase transitions in the set $\Sa$ (including whether each colour can be chosen equally often) as $\alpha>1$ varies.
Our first main result states that the number of linearly-stable equilibria is finite under Conditions $(\alpha)$ and \ref{cond:iid}.
\begin{THM}[Finite number of stable equilibria]
\label{thm:number_equi}
Assume Conditions ($\alpha$) and \ref{cond:iid}.  Then $\Sa$ is finite.
\end{THM}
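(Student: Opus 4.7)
The strategy is to show that every linearly-stable equilibrium is an isolated zero of the vector field $F$ (via non-degeneracy of its Jacobian) and then to upgrade isolation to finiteness via a tame-geometry argument on each face of $\Delta_n$.

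I would first partition $\Delta_n$ by the support of a candidate equilibrium: for each non-empty $I\subseteq [n]$ set $\Delta_n^I:=\{\vec v\in\Delta_n:v_i>0\iff i\in I\}$. This yields a finite partition into $2^n-1$ strata, so it suffices to show $|\Sa\cap\Delta_n^I|<\infty$ for each $I$. One checks that either $\Delta_n^I$ contains no equilibria (if $p_A>0$ for some $A\subseteq [n]\setminus I$), or else, from \eqref{equil_alpha}, the non-trivial components of $F$ on $\Delta_n^I$ take the form
\[
F(\vec v)_i=-v_i+\sum_{A\ni i}p_A\frac{v_i^{\alpha}}{\sum_{j\in A\cap I}v_j^{\alpha}},\qquad i\in I,
\]
with the remaining coordinates automatically zero (since $v_i=0$ for $i\notin I$ and every $A\ni i$ with $p_A>0$ satisfies $A\cap I\neq\varnothing$). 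Because every $v_i$ with $i\in I$ is strictly positive on $\Delta_n^I$, the map $v_i\mapsto v_i^{\alpha}=\exp(\alpha\log v_i)$ is real-analytic and each denominator is strictly positive, so $F|_{\Delta_n^I}$ is real-analytic.

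For $\vec v\in\Sa\cap\Delta_n^I$, the spectral hypothesis forces the restriction of $\mathbf{D}(\vec v)$ to the tangent space of $\Delta_n^I$ at $\vec v$ to be invertible; the implicit function theorem then gives that $\vec v$ is an isolated zero of $F$ in $\Delta_n^I$. To upgrade isolation to finiteness, I would invoke tame geometry: the set $\Ea\cap\Delta_n^I$ is definable in the o-minimal structure $\R_{\mathrm{an},\exp}$ (since $v\mapsto v^{\alpha}$ is definable there), and hence has only finitely many connected components. Each element of $\Sa\cap\Delta_n^I$ is its own $0$-dimensional connected component of $\Ea\cap\Delta_n^I$, forcing $|\Sa\cap\Delta_n^I|<\infty$. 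Summing over the $2^n-1$ strata proves the theorem.

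The main obstacle I anticipate is precisely the step that converts isolation into finiteness, i.e.\ ruling out accumulation of stable equilibria at the relative boundary of a stratum, where $F$ loses real-analyticity for non-integer $\alpha$ (so that isolated zeros of a real-analytic map in a bounded open set are not automatically finite, as the example $x\sin(1/x)$ on $(0,1)$ illustrates). Appealing to o-minimality is the cleanest resolution; as an elementary alternative when $\alpha=p/q$ is rational, one may substitute $v_i=y_i^{q}$ to reduce the equilibrium equations to a polynomial system in $\vec y$ and bound the number of isolated solutions via Bezout's theorem.
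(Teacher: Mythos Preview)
Your proposal is correct and takes a genuinely different route from the paper's proof. Both arguments start the same way: stratify by the support $I$ of $\vec v$ (the paper does this as an induction via Lemma~\ref{lem:reduction}) and observe that a linearly-stable equilibrium has invertible Jacobian, so by the implicit function theorem it is an isolated zero of $F$. The divergence is in how accumulation at the boundary of a stratum is ruled out. The paper first proves an a priori lower bound: using H\"older's inequality and the law-of-large-numbers estimate $\sum_{k\in A}v_k\ge p_A$, it shows that any interior equilibrium satisfies $v_i\ge\varepsilon>0$ for a model-dependent $\varepsilon$. This pushes all interior equilibria into a compact set, after which the paper complexifies $F$ to an analytic map on a polydisc in $\mathbb{C}^n$ and invokes the several-complex-variables theory of analytic sets (decomposition into pure-dimensional pieces; zero-dimensional analytic sets have no interior accumulation points) to conclude finiteness. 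Your argument bypasses both the $\varepsilon$-bound and the complex-analytic machinery by observing that $v\mapsto v^{\alpha}=\exp(\alpha\log v)$ is definable in the o-minimal structure $\R_{\exp}$ (hence in $\R_{\mathrm{an},\exp}$), so the equilibrium set on each stratum is definable and therefore has finitely many connected components; each stable equilibrium is its own component. Your approach is shorter and conceptually clean, at the price of invoking Wilkie's theorem as a black box; the paper's approach uses more classical tools and yields the a~priori bound as a byproduct, which is of independent interest. One small remark: rather than restricting $\mathbf{D}(\vec v)$ to the tangent space of $\Delta_n^I$, it is slightly cleaner to note that at a boundary equilibrium $\mathbf{D}(\vec v)$ is block-diagonal with blocks $\mathbf{D}_I$ and $-\mathbf{I}_{[n]\setminus I}$, so the full $|I|\times|I|$ block $\mathbf{D}_I$ is invertible and the implicit function theorem applies directly in $(0,\infty)^{|I|}$.
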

We will see some examples where $\Sa=\varnothing$, but in many cases the existence of at least one linearly-stable equilibrium is given by the following results, when $\alpha>1$ is sufficiently small.
\begin{LEM}[$\vec{1}/n$ equilibrium]
\label{lem:symmetry_ones}
Assume Conditions \ref{cond:iid} and \ref{cond:symmetry}.  Then $\vec{1}/n\in \Ea$.
\end{LEM}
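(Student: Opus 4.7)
The plan is to verify the equilibrium equation \eqref{equil_alpha} directly at $\vec{v} = \vec{1}/n$, using Condition \ref{cond:symmetry} to show that the resulting sum does not depend on the coordinate index $i$ and evaluates to $1/n$.

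First I would substitute $v_j = 1/n$ for all $j$ into \eqref{equil_alpha}. Then $v_j^{\alpha} = n^{-\alpha}$ for every $j$, so for every nonempty $A$ we have
\begin{equation*}
\frac{v_i^{\alpha}}{\sum_{j \in A} v_j^{\alpha}} = \frac{1}{|A|},
\end{equation*}
and the right-hand side of \eqref{equil_alpha} collapses to $\sum_{A \ni i} p_A / |A|$. Grouping the sum by the cardinality of $A$ and invoking Condition \ref{cond:symmetry}(i) and (ii), this becomes
\begin{equation*}
\sum_{m=1}^{n} \frac{1}{m}\, \#\{A \ni i : |A|=m,\ p_A=p_m\}\, p_m = \sum_{m=1}^{n} \frac{a_m p_m}{m},
\end{equation*}
which is independent of $i$, as required for symmetry.

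The remaining step is to identify this constant with $1/n$. For this I would justify the identity $\P(|A_1|=m) = n m^{-1} a_m p_m$ mentioned in the text by a standard double-counting argument: the number $N_m$ of sets $A$ with $|A|=m$ and $p_A = p_m$ satisfies $m N_m = n a_m$ (count pairs $(i, A)$ with $i \in A$ in two ways, using Condition \ref{cond:symmetry}(ii)), so $\P(|A_1|=m) = N_m p_m = n m^{-1} a_m p_m$. Since Condition \ref{cond:iid} gives $p_{\varnothing}=0$, these probabilities must sum to $1$ over $m \in \{1,\dots,n\}$, yielding $\sum_{m=1}^n a_m p_m / m = 1/n$. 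Combining this with the previous display gives $F(\vec{1}/n)_i = -1/n + 1/n = 0$ for every $i$, which is exactly the claim.

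There is no real obstacle here; the only ingredient beyond substitution is the double-counting identity relating $(p_m, a_m)$ to $\P(|A_1|=m)$, and once that is in hand the computation is essentially one line. The lemma should be immediate from Condition \ref{cond:symmetry} and the normalisation of the law of $A_1$.
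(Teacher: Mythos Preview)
Your proof is correct and follows essentially the same route as the paper: substitute $\vec{v}=\vec{1}/n$ into \eqref{equil_alpha}, reduce the right-hand side to $\sum_{A\ni i}p_A/|A|=\sum_{m}a_mp_m/m$ using Condition~\ref{cond:symmetry}, and observe this is independent of $i$. The only cosmetic difference is in the final normalisation step: the paper simply invokes the already-established fact that the right-hand side of \eqref{equil_alpha} sums to $1$ over $i$ (so $n$ equal terms must each be $1/n$), whereas you re-derive $\sum_m a_mp_m/m=1/n$ via the double-counting identity $\P(|A_1|=m)=nm^{-1}a_mp_m$ --- which is in fact the identity the paper states without proof just after Condition~\ref{cond:symmetry}.
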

Note that $\vec{1}/n$ is not an equilibrium for Example \ref{exa:line}, which does not satisfy Condition \ref{cond:symmetry}.  The eigenvalues associated to $\vec{1}/n\in \Ea$ will be continuous functions of $\alpha$, giving rise to the transitions between the linear-stable and critical regions:

\begin{PRP}[Stability of $\vec{1}/n$]
\label{prp:all_equal_stable}
Assume Conditions \ref{cond:iid}, \ref{cond:strong_symmetry}, $(\alpha)$, and that $p_n<1$.  Then $\vec{1}/n\in \Sa$ if and only if 
\begin{align}
\alpha<\frac{1}{n^2\sum_{m=2}^n\frac{p_m}{m^2}{n-2 \choose m-2}}.\label{all_equal_stable}
\end{align}
Moreover, $\vec{1}/n$ is critical if and only if equality holds in \eqref{all_equal_stable}.
\end{PRP}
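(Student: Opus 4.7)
The plan is to compute the Jacobian $\mathbf{D}(\vec{1}/n)$ explicitly, diagonalize it using the $S_n$-symmetry implied by Condition \ref{cond:strong_symmetry}, and then match the resulting scalar inequality with \eqref{all_equal_stable}. Write $F_i(\vec{v}) = -v_i + h_i(\vec{v})$ with
\[
h_i(\vec{v}) := \sum_{A \ni i} p_A \frac{v_i^{\alpha}}{\sum_{j \in A} v_j^{\alpha}}.
\]
Since $h$ is homogeneous of degree zero, $F$ extends verbatim to an open neighborhood of $\vec{1}/n$ in $\R_{>0}^n$, on which one computes $\sum_i F_i(\vec{v}) = 1 - \sum_i v_i$. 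Differentiating this last identity in $v_k$ gives $\vec{1}^T \mathbf{D}(\vec{1}/n) = -\vec{1}^T$, so $-1$ is automatically an eigenvalue of $\mathbf{D}(\vec{1}/n)$, associated with the direction transverse to $\Delta_n$.

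Under Condition \ref{cond:strong_symmetry} the model is invariant under any permutation of the $n$ colours, and $\vec{1}/n$ is fixed by this action; hence $\mathbf{D}(\vec{1}/n)$ commutes with every permutation matrix. The commutant of the defining $S_n$-action on $\R^{n \times n}$ is two-dimensional, spanned by $I$ and the all-ones matrix $J$, so
\[
\mathbf{D}(\vec{1}/n) \;=\; (d-e)\, I + e\, J
\]
for some scalars $d, e$. Its eigenvalues are $d + (n-1)e$ on $\mathrm{span}(\vec{1})$ and $d - e$, with multiplicity $n-1$, on the tangent hyperplane $\{\vec{u} : \sum_i u_i = 0\}$ to $\Delta_n$. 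By the previous paragraph the first equals $-1$, so linear stability of $\vec{1}/n$ (in the sense of Definition \ref{stable}) is equivalent to $d - e < 0$, and criticality to $d - e = 0$.

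To evaluate $d - e$, I differentiate $h$ at $\vec{1}/n$. Using $v_i^\alpha = n^{-\alpha}$ and $\sum_{j\in A} v_j^\alpha = m\,n^{-\alpha}$ when $|A|=m$, each subset $A \ni i$ of size $m$ contributes $\alpha n(m-1)/m^{2}$ to $\partial h_i/\partial v_i$, while each subset $A$ containing both $i$ and a distinct $k$ contributes $-\alpha n/m^{2}$ to $\partial h_i/\partial v_k$. Summing over subsets via Condition \ref{cond:strong_symmetry}, which supplies multiplicities $\binom{n-1}{m-1}$ and $\binom{n-2}{m-2}$ respectively with common weight $p_m$, yields
\[
d - e \;=\; -1 + \alpha n \sum_{m=2}^{n} p_m \!\left[\binom{n-1}{m-1}\frac{m-1}{m^{2}} + \binom{n-2}{m-2}\frac{1}{m^{2}}\right].
\]
The identity $(m-1)\binom{n-1}{m-1} = (n-1)\binom{n-2}{m-2}$ collapses the bracket to $n \binom{n-2}{m-2}/m^{2}$, so
\[
d - e \;=\; -1 + \alpha\, n^{2} \sum_{m=2}^{n} \frac{p_m \binom{n-2}{m-2}}{m^{2}},
\]
from which the equivalences $d - e < 0 \iff $ \eqref{all_equal_stable} and $d - e = 0 \iff $ equality in \eqref{all_equal_stable} are immediate. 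No step is genuinely delicate; the main care lies in correctly accounting for the symmetry factors and in applying the binomial identity above. The hypothesis $p_n < 1$ serves only to exclude the degenerate case in which the whole set is selected with probability one (for which \eqref{all_equal_stable} would reduce to the vacuous $\alpha < 1$), and plays no role in the calculation itself.
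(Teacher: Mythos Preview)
Your proof is correct. The overall strategy matches the paper's---compute the diagonal and off-diagonal entries of $\mathbf{D}(\vec{1}/n)$ under strong symmetry, extract the eigenvalues, and simplify via $(m-1)\binom{n-1}{m-1}=(n-1)\binom{n-2}{m-2}$---but you diverge in how you diagonalize. The paper writes $\mathbf{D}-\lambda I$ as a rank-one perturbation of a scalar matrix and invokes a Modified Matrix Determinant Lemma (Lemma~\ref{lem:MMDL}) to compute the characteristic polynomial directly. You instead exploit the $S_n$-equivariance to conclude a priori that $\mathbf{D}(\vec{1}/n)\in\mathrm{span}\{I,J\}$, read off the two eigenvalues from that structure, and pin down the one on $\mathrm{span}(\vec{1})$ as $-1$ via the identity $\sum_i F_i(\vec{v})=1-\sum_i v_i$. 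Your route is slightly more conceptual and self-contained (no auxiliary determinant lemma needed); the paper's MMDL approach, on the other hand, is reused later for less symmetric equilibria (e.g.\ Corollary~\ref{cor:cond_alpha} and the star-graph analysis in Section~\ref{sec:star}) where the full permutation symmetry is unavailable.
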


Here the right hand side is strictly larger than 1 when $p_n<1$, so under the assumptions of Proposition \ref{prp:all_equal_stable}, $\vec{1}/n\in \Sa$ for $\alpha>1$ but sufficiently close (depending on the model) to $1$, and $\vec{1}/n\notin \Sa$ for $\alpha$ sufficiently large.  In other words, all such models exhibit at least one phase transition. By applying Proposition \ref{prp:all_equal_stable} to various special cases one obtains the following:
\begin{COR}[Stability of $\vec{1}/n$ in Examples \ref{exa:fixedm}--\ref{exa:star_graph}]
\label{cor:all_equal_stable}
The equilibrium $\vec{1}/n$ is linearly-stable (critical when equality holds below) for:
\begin{itemize}
\item[] Example \ref{exa:fixedm} if and only if
\[\alpha<\frac{m(n-1)}{n(m-1)};    \]
\item[] Example \ref{exa:bernoulli} if and only if
\[\alpha<\frac{1-(1-p)^n}{\sum_{m=2}^np^m(1-p)^{n-m}\frac{n^2}{m^2}{n-2 \choose m-2}};\]
\item[] Example \ref{exa:star_graph} if and only if $\alpha<n+1$.
\end{itemize}
\end{COR}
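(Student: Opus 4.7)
The plan is to apply Proposition \ref{prp:all_equal_stable} directly to each example, verifying first that Conditions \ref{cond:iid} and \ref{cond:strong_symmetry} hold with $p_n<1$, and then explicitly computing the sum $S_n:=n^2\sum_{m=2}^n\frac{p_m}{m^2}\binom{n-2}{m-2}$ so that the threshold for linear stability becomes $\alpha<1/S_n$, with criticality at $\alpha=1/S_n$. Each of the three cases reduces to a direct substitution together with a single binomial identity.

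For Example \ref{exa:fixedm}, under Condition \ref{cond:strong_symmetry} we have $p_r=0$ for $r\ne m$ and $p_m=1/\binom{n}{m}$ (so that $\P(|A_1|=m)=\binom{n}{m}p_m=1$). The sum $S_n$ collapses to a single term, and the identity
\[
\binom{n-2}{m-2}\Big/\binom{n}{m}=\frac{m(m-1)}{n(n-1)}
\]
gives $S_n=\frac{n(m-1)}{m(n-1)}$, so the stability threshold $\alpha<1/S_n$ is exactly $\alpha<\frac{m(n-1)}{n(m-1)}$. Here we need $p_n<1$, which holds when $m<n$ (the case $m=n$ is trivial and excluded in the example).

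For Example \ref{exa:bernoulli}, plugging $p_m=p^m(1-p)^{n-m}/(1-(1-p)^n)$ into $S_n$ and clearing the normalizing constant $1-(1-p)^n$ from the denominator yields
\[
\frac{1}{S_n}=\frac{1-(1-p)^n}{\sum_{m=2}^np^m(1-p)^{n-m}\frac{n^2}{m^2}\binom{n-2}{m-2}},
\]
matching the claim. For Example \ref{exa:star_graph}, the only $m\ge 2$ contributing to $S_n$ is $m=n$ (since $p_m=0$ for $2\le m<n$), with $p_n=\frac{1}{n+1}$ and $\binom{n-2}{n-2}=1$, so $S_n=\frac{1}{n+1}$ and the threshold becomes $\alpha<n+1$. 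In all three cases the ``equality'' clause of Proposition \ref{prp:all_equal_stable} immediately translates to the stated criticality condition.

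There is no substantial obstacle: the entire argument is bookkeeping. The only point requiring minor care is checking that the hypothesis $p_n<1$ of Proposition \ref{prp:all_equal_stable} holds in each non-trivial case (trivially true for Examples \ref{exa:fixedm} with $m<n$ and \ref{exa:star_graph}; for Example \ref{exa:bernoulli} it follows from $p^n+(1-p)^n<1$ for $n\ge 2$).
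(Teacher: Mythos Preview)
Your proposal is correct and follows exactly the approach the paper intends: the paper states that the corollary is obtained ``by applying Proposition \ref{prp:all_equal_stable} to various special cases,'' and you have carried out precisely that substitution, including the binomial identity for Example \ref{exa:fixedm} and the verification of $p_n<1$ in each case. There is nothing to add.
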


The remaining examples have rather different behaviour:
\begin{PRP}[Stability of $\vec{1}/n$ in Examples \ref{exa:circle}--\ref{exa:complete_graph}]
\label{prp:cycle_complete_all_equal_stable}
The equilibrium $\vec{1}/n$ is linearly-stable (critical when equality holds below) for:
\begin{itemize}
\item[] Example \ref{exa:circle} if and only if $n$ is odd and $\alpha<\cos\left(\tfrac{\pi}{2n} \right)^{-2}$;
\item[] Example \ref{exa:complete_graph} if and only if $n=3$ and $\alpha<4/3$.
\end{itemize}
\end{PRP}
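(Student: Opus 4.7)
The plan is to linearise the vector field $F$ at $\vec{v}=\vec{1}/n$ in each example, identify the resulting Jacobian ${\bf D}(\vec{1}/n)$ as an affine function of the adjacency matrix of a familiar auxiliary graph, and read off linear stability from known spectral data.

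Under Condition \ref{cond:Vuniform} we have
\[F(\vec{v})_e = -v_e + \frac{1}{n_v}\sum_{u\in e}\frac{v_e^{\alpha}}{\sum_{e'\ni u} v_{e'}^{\alpha}},\]
so computing $\partial F_e/\partial v_f$ at $\vec{v}=\vec{1}/n$ is a direct quotient-rule exercise. In both examples the Jacobian reduces to
\[{\bf D}(\vec{1}/n) = c_0\, I - c_1\, M,\]
where $M$ is the adjacency matrix of the line graph $L(G)$ (so $M_{e,f}=1$ iff $e\neq f$ share an endvertex) and $c_0,c_1$ are explicit constants in $\alpha$ and the graph parameters, with $c_1>0$. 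Since $L(G)$ is regular, $\vec{1}$ is an eigenvector of $M$; the associated eigenvalue of ${\bf D}$ is orthogonal to the tangent space $T\Delta_n=\{\vec{u}:\sum_i u_i=0\}$ and so does not affect linear stability, while the remaining eigenvectors of the symmetric matrix $M$ span $T\Delta_n$ and supply the relevant eigenvalues.

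For Example \ref{exa:circle} one finds $c_0=-1+\alpha/2$ and $c_1=\alpha/4$, with $L(C_n)=C_n$. The circulant $M$ has eigenvalues $2\cos(2\pi k/n)$, $k=0,\dots,n-1$, so the tangent-space eigenvalues of ${\bf D}$ are $\mu_k = -1 + \alpha\sin^2(\pi k/n)$ for $k=1,\dots,n-1$. If $n$ is even, taking $k=n/2$ gives $\mu_{n/2}=\alpha-1>0$ for $\alpha>1$, so stability fails. If $n$ is odd, the maximum of $\sin^2(\pi k/n)$ over $k\in\{1,\dots,n-1\}$ is attained at $k=(n\pm 1)/2$ with value $\cos^2(\pi/(2n))$, so $\vec{1}/n\in\Sa$ iff $\alpha\cos^2(\pi/(2n))<1$, with equality giving a zero eigenvalue (the critical case).

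For Example \ref{exa:complete_graph}, $L(K_{n_v})$ is the triangular (Johnson) graph $T(n_v)$, whose classical spectrum is $2(n_v-2)$ with multiplicity $1$ and eigenvector $\vec{1}$, $n_v-4$ with multiplicity $n_v-1$, and $-2$ with multiplicity $n_v(n_v-3)/2$. The computation yields $c_0=-1+\alpha(n_v-2)/(n_v-1)$ and $c_1=\alpha/(2(n_v-1))$, and substitution gives non-trivial eigenvalues of ${\bf D}$ equal to $-1+\alpha n_v/(2(n_v-1))$ (multiplicity $n_v-1$) and $-1+\alpha$ (multiplicity $n_v(n_v-3)/2$). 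For $\alpha>1$ the second eigenvalue is strictly positive whenever its multiplicity is positive, forcing $n_v=3$; when $n_v=3$ only the first eigenvalue survives and the stability condition becomes $\alpha<4/3$, with equality the critical case. The main obstacle is spectral: identifying the off-diagonal block in the complete-graph case as the adjacency of the Johnson graph $T(n_v)$ and invoking its known spectrum. Once this is done, the dichotomy $n_v=3$ vs.\ $n_v\ge 4$ is forced by the presence of the $-2$ eigenvalue of $M$.
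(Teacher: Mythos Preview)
Your approach is essentially identical to the paper's: in Section~\ref{sec:circle} the Jacobian at $\vec{1}/n$ is recognised as a circulant and its eigenvalues $-1+\tfrac{\alpha}{2}(1-\cos(2\pi j/n))$ are read off directly, and in Theorem~\ref{thm_complete_graph_1_over_n} the Jacobian is written as an affine function of the adjacency matrix of $L(K_{n_v})$ and the eigenvalue $-2$ (of multiplicity $n-n_v$) from \cite{Brouwer} is invoked to produce the destabilising eigenvalue $-1+\alpha$ when $n_v\ge 4$. One small remark: your aside that the $\vec{1}$-eigenvector ``does not affect linear stability'' is not needed and does not quite match Definition~\ref{stable}, which asks that \emph{all} eigenvalues of ${\bf D}$ have negative real part; fortunately the corresponding eigenvalue is $c_0-c_1\cdot(\text{degree of }L(G))=-1$ in both examples, so nothing is lost.
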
 
Note that in the graph setting, when $\vec{v}=\vec{1}/n$, the matrix of partial derivatives is related to the edge-adjacency matrix.  Typically $\vec{1}/n$ is not the \emph{only} equilibrium, and indeed we will see many more examples of linearly-stable equilibria for various models.  See for example Corollary \ref{cor:cond_alpha} in the case of Example \ref{exa:fixedm}.  If Condition \ref{cond:Vuniform} (and Condition \ref{cond:iid}) is satisfied then by the law of large numbers, any $\vec{v}\in \mc{E}$ must satisfy $v_i\le 2/n_v$ for each edge $i$, since each edge is incident to 2 vertices.  Similarly, for any $i$ incident to a leaf we have $v_i\ge 1/n_v$. 

The following result often allows one to find stable equilibria in large systems by finding stable equilibria in smaller systems.
\begin{LEM}[Stability reduction]
\label{lem:reduction}
Suppose that $p_A=\P(A_t=A)$ for each $A\subset [n]$, with $p_{\{n\}}=0$,
and define 
$p'_{A\setminus\{n\}}=p_A$.  Then 
$\vec{v}=(v_1,\dots,v_{n-1},0)$ is a (linearly-stable) equilibrium for the WARM $(p_A)_{A\subset [n]}$ if and only if $\vec{v}'=(v_1,\dots, v_{n-1})$ is a (linearly-stable) equilibrium for the WARM $(p'_{A'})_{A'\subset \C_{n-1}}$.
\end{LEM}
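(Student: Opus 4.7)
The plan is to show that, because $v_n=0$ at the equilibrium and $\alpha>1$, the vector field $F$ at $\vec{v}=(v_1,\dots,v_{n-1},0)$ and its Jacobian $\mathbf{D}$ decouple cleanly into an $(n-1)$-dimensional part that matches the reduced WARM $(p'_{A'})$ and a trivial one-dimensional part in the $n$-th coordinate. Throughout, one should read the definition $p'_{A\setminus\{n\}}=p_A$ as $p'_{A'}=p_{A'}+p_{A'\cup\{n\}}$ for $A'\subset[n-1]$.

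For the equilibrium equivalence, I would substitute $v_n=0$ into \eqref{equil_alpha}. The hypothesis $p_{\{n\}}=0$ ensures that every $A\ni n$ with $p_A>0$ has $|A|\ge 2$, so $v_n^\alpha=0$ drops from the denominators $\sum_{j\in A}v_j^\alpha$ without introducing any $0/0$ issue. The $i=n$ equation then reduces to $0=0$. For $i\in[n-1]$, splitting the sum over $A\ni i$ according to whether or not $n\in A$, and using $p'_{A'}=p_{A'}+p_{A'\cup\{n\}}$, converts \eqref{equil_alpha} at $\vec{v}$ into \eqref{equil_alpha} for the reduced WARM at $\vec{v}'$. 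The argument is reversible.

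For the stability equivalence, I would compute the four blocks of the $n\times n$ matrix $\mathbf{D}(\vec{v})$ and show it is block-diagonal with blocks $\mathbf{D}'(\vec{v}')$ and $(-1)$. The $(n-1)\times(n-1)$ top-left block equals $\mathbf{D}'(\vec{v}')$ by the same splitting argument applied to the partial derivatives $\partial F_i/\partial v_k$ with $i,k\in[n-1]$. The row $(\partial F_n/\partial v_k)_{k\in[n-1]}$ vanishes because every summand of $F_n$ carries a factor $v_n^\alpha=0$ and differentiating in $v_k$ preserves this factor. The column $(\partial F_i/\partial v_n)_{i\in[n-1]}$ vanishes because $v_n$ enters $F_i$ only through $v_n^\alpha$ in denominators, and $\partial v_n^\alpha/\partial v_n=\alpha v_n^{\alpha-1}=0$ at $v_n=0$ thanks to $\alpha>1$. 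The $(n,n)$ entry equals $-1$ for the same reason. Consequently $\mathrm{Spec}(\mathbf{D}(\vec{v}))=\mathrm{Spec}(\mathbf{D}'(\vec{v}'))\cup\{-1\}$, and since $-1<0$, linear stability of $\vec{v}$ is equivalent to linear stability of $\vec{v}'$.

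The main obstacle is technical rather than conceptual: one must carefully justify that the Jacobian entries exist at the boundary point $v_n=0$ and equal the naive formal derivatives. Both hypotheses pull their weight here. The assumption $p_{\{n\}}=0$ keeps the ratios $v_i^\alpha/\sum_{j\in A}v_j^\alpha$ bounded in a neighbourhood of $\vec{v}$, while $\alpha>1$ is essential for the off-diagonal decoupling at $v_n=0$; were $\alpha\le 1$, the block structure would fail and the reduction would be false in general.
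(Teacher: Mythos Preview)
Your proposal is correct. The paper does not give a separate proof of this lemma, opting instead to prove Theorem~\ref{thm:subgraphs} directly (see the remark at the start of Section~\ref{sec:general_proofs}); that proof uses exactly the block-diagonal Jacobian structure you describe, so your argument is essentially the same as the paper's approach specialised from the graph setting to the general WARM setting.
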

The most important consequence of Lemma \ref{lem:reduction} for us is in the graph setting.
 Let $G$ be a graph with vertex set $V$ and edge set $E$, and let $n_v=|V|$. Let $G_1,\dots, G_k$ be connected subgraphs of $G$ with $E_j=\{e_1,\dots,e_{|E_j|}\}$ and $V_j$ denoting the edge set and vertex set respectively of $G_j$.  Let 
\begin{align}
\mathcal{G}_{\sss G}=\big\{{\bf G}=\{G_j\}_{j=1}^k:k\le n_v/2, \, |V_j|\ge 2 \text{ for each }j, \,V=\cup_{j=1}^kV_j, \, V_j\cap V_{j'}=\varnothing \text{ for all }j\ne j'\big\},
\end{align}
denote the $V$-spanning collections of non-trivial connected clusters of $G$, and let ${\bf E}=\cup_{j=1}^kE_j$.  Let $\mc{E}_{\sss G}$ and $\mc{S}_{\sss G}$ denote the equilibria and linearly stable equilibria for a WARM on $G$.
\begin{THM}[Subgraph stability reduction]
\label{thm:subgraphs}
Fix $G$, and let 
	\[{\bf G}=\{G_j\}_{j=1}^k\in \mathcal{G}_{\sss G}\qquad \text{and} \qquad \vec{v}=\big((v_e)_{e\in E_1},(v_e)_{e\in E_2},\dots,(v_e)_{e\in E_k},(0)_{e\in E\setminus {\bf E}}\big).\]
Then, for any WARM on $G$,
\begin{itemize}
\item[(1)] $\vec{v}\in \mc{E}_{\sss G}$ if and only if $\frac{|V|}{|V_j|}(v_e)_{e\in E_j}\in \mc{E}_{\sss G_j}$ for each $j=1,\dots, k$,
\item[(2)] $\vec{v}\in \mc{S}_{\sss G}$ if and only if $\frac{|V|}{|V_j|}(v_e)_{e\in E_j}\in \mc{S}_{\sss G_j}$ for each $j=1,\dots, k$.
\end{itemize}
\end{THM}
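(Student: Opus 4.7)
The plan is to show that the ansatz where $\vec{v}$ vanishes off ${\bf E}$ makes both the equilibrium map $F$ and its Jacobian ${\bf D}$ decouple into blocks indexed by the clusters, while the $E\setminus{\bf E}$ coordinates contribute only trivial directions; the homogeneity $W(cx)=c^\alpha W(x)$ of $W(x)=x^\alpha$ then rescales the block on $E_j$ onto the WARM on $G_j$ with scaling factor $|V|/|V_j|$. For (1), I would substitute $v_e=(|V_j|/|V|)u^{(j)}_e$ for $e\in E_j$ and $v_e=0$ otherwise into \eqref{equil_alpha}. For $e\in E_j$, each $A$ containing $e$ is the edge-star $A_u$ around an endpoint $u\in V_j$, and because $v_{e'}=0$ for $e'\notin{\bf E}$ the denominator $\sum_{e'\in A_u}v_{e'}^\alpha$ collapses to a sum over edges of $G_j$ at $u$. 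After cancelling the common factor $(|V_j|/|V|)^{\alpha-1}$, the $G$-equation for $e$ becomes precisely the $G_j$-equilibrium equation for $\vec{u}^{(j)}$ with vertex-selection probability $1/|V_j|$. Summing over $e\in E_j$ forces $\sum_{e\in E_j}v_e=|V_j|/|V|$, so each $\vec{u}^{(j)}\in\Delta_{|E_j|}$. For $e\notin {\bf E}$ the equation reduces to $0=0$, since $v_e^\alpha=0$ and the denominators at each endpoint are strictly positive (each endpoint lies in some $V_j$ with $|V_j|\ge 2$, and by connectedness of $G_j$ is incident to at least one edge of $E_j$ carrying positive weight). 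The converse substitution reverses the argument.

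For (2), I would compute
\[D_{e,f}=-\alpha v_f^{\alpha-1}\sum_{A\ni e,f}p_A\frac{v_e^\alpha}{S_A^2}\qquad(e\ne f),\]
with $S_A=\sum_{e'\in A}v_{e'}^\alpha$. Because $\alpha>1$, every vanishing coordinate kills the factor $v^{\alpha-1}$ or $v^\alpha$, and no star $A_u$ contains edges from two distinct clusters; this forces $D_{e,f}=0$ whenever $e,f$ lie in different $E_j$'s, or whenever either of them lies in $E\setminus{\bf E}$. The diagonal entries for $e\in E\setminus{\bf E}$ collapse to $D_{e,e}=-1$ for the same reason. For $e,f\in E_j$ (including $e=f$) the same substitution as in (1) produces matching scalings: numerators scale by $(|V_j|/|V|)^{2\alpha-1}$, denominators by $(|V_j|/|V|)^{2\alpha}$, and the residual factor $|V|/|V_j|$ turns the prefactor $1/|V|$ into $1/|V_j|$. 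Every entry of the $E_j\times E_j$ block therefore equals the corresponding entry of the Jacobian ${\bf D}^{(j)}(\vec{u}^{(j)})$ for the WARM on $G_j$.

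Combining these, ${\bf D}(\vec{v})$ is block-diagonal with blocks $\{{\bf D}^{(j)}(\vec{u}^{(j)})\}_{j=1}^k$ together with $-I$ on the $E\setminus{\bf E}$ directions, so its spectrum is the union of the spectra of the blocks and of $\{-1\}$. Consequently $\vec{v}\in\mc{S}_{\sss G}$ iff every $\vec{u}^{(j)}\in\mc{S}_{\sss G_j}$, proving (2). The main technical obstacle is the scaling bookkeeping in the $E_j\times E_j$ block: tracking the powers of $|V_j|/|V|$ and verifying they cancel so as to reproduce the smaller Jacobian \emph{exactly}. A secondary but essential point is that the off-diagonal cancellations genuinely require $\alpha>1$, rather than $\alpha\ge 1$; without this, the block-diagonal structure fails and the reduction breaks down.
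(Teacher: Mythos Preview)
Your approach is correct and essentially identical to the paper's: both reduce the equilibrium equations cluster-by-cluster via the vanishing of cross-terms, then observe that ${\bf D}(\vec{v})$ is block-diagonal with blocks ${\bf D}^{(j)}$ and a $-I$ block on $E\setminus{\bf E}$, so the spectrum is the union. Your treatment is in fact more careful on two points the paper glosses over: you track the $|V|/|V_j|$ scaling in the Jacobian block explicitly, and you correctly note that the vanishing of the off-block entries when $e'\in E\setminus{\bf E}$ comes from the factor $v_{e'}^{\alpha-1}=0$ (so genuinely needs $\alpha>1$), whereas the paper asserts somewhat loosely that ``$F(\vec{v})_e$ does not depend on $v_{e'}$''.
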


\begin{DEF}[$(G,\alpha)$-stable allocation]Given a graph $G$, $\alpha>1$ and ${\bf G}\in \mc{G}_{\sss G}$, we say that ${\bf G}$ admits a $(G,\alpha)$-stable allocation if there exists $\vec{v}$ with $v_{e'}>0$ for all $e'\in {\bf E}$ and $v_e=0$ for all $e\in E\setminus {\bf E}$ such that $\vec{v}\in (\Sa)_{\sss G}$ or $\vec{v}$ is critical.  
\end{DEF}
An element ${\bf G}$ of $\mc{G}_{\sss G}$ is said to be a whisker-forest (resp.~star-forest) if each component $G_j$ is a whisker (resp.~star) graph.
\begin{CON}[Whisker-forest conjecture]
\label{con:whisker-forest}
Let $G$ be any graph. There exists $\alpha_{\sss G}$ such that, for all $\alpha>\alpha_{\sss G}$,
\begin{itemize}
\item[(i)] any whisker-forest ${\bf G}$ on $G$ admits a $(G,\alpha)$-stable allocation;
\item[(ii)] for any $\vec{v}\in (\Sa)_{\sss G}$, there exists a whisker-forest ${\bf G}$ on $G$ such that: $v_e>0$ if and only if $e \in {\bf E}$.
\end{itemize}
\end{CON}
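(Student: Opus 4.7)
The strategy is to exploit Theorem \ref{thm:subgraphs} to reduce both parts of the conjecture to the analysis of WARMs on a \emph{single} whisker, and then to carry out a large-$\alpha$ asymptotic analysis of equilibria and the associated Jacobians.

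\emph{Part (i).} By Theorem \ref{thm:subgraphs}, showing that a whisker-forest ${\bf G}=\{G_j\}_{j=1}^k$ admits a $(G,\alpha)$-stable allocation reduces to showing, for each component $G_j$, that the WARM on $G_j$ viewed as a stand-alone graph has a linearly-stable equilibrium with strictly positive coordinates. Since $G$ is finite, only finitely many whiskers arise as components in any ${\bf G}\in\mc{G}_{\sss G}$, so it suffices to prove: for every whisker $H$ there exists $\alpha_H$ such that for all $\alpha>\alpha_H$ the WARM on $H$ admits a full-support linearly-stable equilibrium. For a star graph this is classical, since any equilibrium satisfies $v_e\ge 1/n_v$ (the leaf vertex contributes $1/n_v$ directly to $v_e$ in \eqref{equil_alpha}); the asymmetric equilibria observed for Example~\ref{exa:star_graph} with $n=2$ above the threshold $\alpha=n+1$ generalise, and their stability can be verified by computing the Jacobian in a basis adapted to the leaf edges. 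For a diameter-$3$ whisker with central edge $c$ joining hubs of degrees $r+1$ and $s+1$, one considers the family of equilibria in which $v_c^\alpha$ dominates its neighbours at both hubs. As $\alpha\to\infty$ the leading order gives $v_c=2/n_v$ and every leaf edge $=1/n_v$ (summing to $1$), and the implicit function theorem continues this to a smooth branch of equilibria for large $\alpha$. The limiting Jacobian is block-diagonal up to $o(1)$, with the leaf-block negative-definite and the $c$-direction contributing a negative eigenvalue, so the branch is linearly stable for $\alpha$ large enough.

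\emph{Part (ii).} Given $\vec{v}\in(\Sa)_{\sss G}$, let $H$ be the subgraph spanned by $\{e:v_e>0\}$. First, $V_H=V$: if some vertex $u$ were isolated in $H$, then for $V_t=u$ the selection is among edges of weight zero, and the limit in \eqref{Fdef} is undefined (equivalently, the corresponding stratum of $\Delta_n$ carries no stable fixed point of the vector field). Applying Theorem \ref{thm:subgraphs} to the connected components of $H$ reduces (ii) to the claim: for all sufficiently large $\alpha$, any connected graph $H'$ with a full-support linearly-stable equilibrium is a whisker. This splits into ruling out (a) cycles in $H'$, and (b) a path of length $\ge 4$ in $H'$. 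For (a), Proposition \ref{prp:cycle_complete_all_equal_stable} already handles the uniform equilibrium on cycles, and for general cycle-containing $H'$ one argues that the full-support equilibrium equation forces a ``balance'' constraint around the cycle that produces an eigenvalue of order $\alpha$ with positive real part. For (b), consider an internal edge $e=\{u,u'\}$ on a longest path, so that both $u$ and $u'$ have a further neighbour in $H'$. Full support forces a near-balance $v_e^\alpha\asymp v_{e'}^\alpha$ at some neighbour, and the corresponding $2\times 2$ sub-block of $\mathbf{D}(\vec{v})$ has off-diagonal entries of order $\alpha$, yielding a positive eigenvalue for large $\alpha$.

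The main obstacle is making step (b) work uniformly across all possible combinatorial structures of $H'$: the Jacobian is a global object, and one must rule out the possibility that off-diagonal entries elsewhere conspire to cancel the $\alpha$-growing destabilising entry. A natural line of attack is to construct an explicit test vector supported on the ``bad'' edges along the longest path and to show $\langle w,\mathbf{D}(\vec{v})w\rangle>0$ for large $\alpha$, using the monotonicity of $x\mapsto x^\alpha$ and the fact that on trees of diameter $\ge 4$ the relevant two-edge competitions cannot be screened by leaf edges. This spectral uniformity is precisely why the statement remains a conjecture, and progress on it likely requires a finer understanding of the shape of the stable manifold near boundary strata of $\Delta_n$ than is developed in the present paper.
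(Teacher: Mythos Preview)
The statement you are attempting to prove is a \emph{conjecture} in the paper (Conjecture~\ref{con:whisker-forest}), not a theorem; the paper does not contain a proof and explicitly lists it as open in Section~\ref{sec:discussion}. What the paper does establish are partial results toward part~(i) only: Theorem~\ref{thm:star_forest_admits} handles star-forests (for all $\alpha>1$, via the complete classification in Theorem~\ref{thm:general_star_graph}), and Theorem~\ref{thm:symmetric_whisker} handles symmetric-whisker-forests for large $\alpha$ (via Theorem~\ref{thm:symmetric_whisker2}). Nothing is proved toward part~(ii) beyond the triangle graph (Theorem~\ref{thm_triangle_main}), and the paper explicitly says it \emph{believes} but has not verified the corresponding statement even for the line graph on four edges.

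Your sketch for part~(i) follows the same reduction as the paper (Theorem~\ref{thm:subgraphs} reduces to single whiskers), and your limiting ansatz $v_c\to 2/n_v$, leaf edges $\to 1/n_v$ agrees with what the paper proves in the symmetric case (Theorem~\ref{thm:symmetric_whisker2}, where $u(+\infty)=(r+1)^{-1}$). However, your implicit-function-theorem continuation from $\alpha=\infty$ and the assertion that the limiting Jacobian is ``block-diagonal up to $o(1)$ with the right signs'' are not actually carried out; the paper's own stability analysis (Lemma~\ref{lem:stable_symmetric_whisker} and the proof of Theorem~\ref{thm:symmetric_whisker2}) shows this step is delicate even in the symmetric case, hinging on the explicit inequality~\eqref{inequality_lambda2}. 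The asymmetric whisker remains genuinely open.

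For part~(ii), your reduction and the split into cycles versus long paths are sensible, but the spectral claims are not proved. Proposition~\ref{prp:cycle_complete_all_equal_stable} addresses only the barycentric equilibrium $\vec{1}/n$ on the bare cycle, not arbitrary full-support equilibria on graphs \emph{containing} a cycle, so invoking it for~(a) does not suffice. For~(b), the test-vector idea is plausible but is precisely the ``spectral uniformity'' obstacle you yourself flag in the final paragraph. In short, there is no paper proof to compare against; your outline is consonant with the paper's partial results and stated beliefs, but the gaps you identify are exactly why the statement is labelled a conjecture.
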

As in Conjecture \ref{con:whisker-forest}(ii), we believe that when $\alpha$ is large enough (depending on $G$), all stable equilibria live on whisker forests. What we have proved in this direction is the following result, which follows from an explicit characterisation (see Theorem \ref{thm:general_star_graph}) of $\mc{S}_{\alpha}$ for the WARM star graph:

\begin{THM}[$(G,\alpha)$-stable allocation for star-forests]
\label{thm:star_forest_admits}
For any graph $G$, and $\alpha>1$, any star-forest ${\bf G}$ on $G$ admits a $(G,\alpha)$-stable allocation.
\end{THM}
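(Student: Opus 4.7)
The plan is to reduce, via Theorem \ref{thm:subgraphs}(2), the existence of a $(G,\alpha)$-stable allocation on a star-forest ${\bf G}=\{G_j\}_{j=1}^k$ to the corresponding question on each individual star-component $G_j$ viewed in isolation. Explicitly, any candidate allocation $\vec{v}$ supported on ${\bf E}$ is a linearly-stable (resp.\ critical) equilibrium for the WARM on $G$ if and only if its $|V|/|V_j|$-scaling, restricted to $E_j$, is a linearly-stable (resp.\ critical) equilibrium for the WARM on $G_j$ for each $j$; moreover $\vec{v}$ is fully supported on ${\bf E}$ if and only if each scaled restriction is fully supported on $E_j$. Thus it suffices to prove that for every star graph $S_d$ on $d\ge 1$ edges and every $\alpha>1$, the WARM on $S_d$ has a fully-supported linearly-stable or critical equilibrium.

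I would next observe that for the star graph $S_d$, full support is automatic. Since every edge $e$ of $S_d$ is incident to a leaf, Condition \ref{cond:Vuniform} contributes an additive $1/(d+1)$ to the right-hand side of \eqref{equil_alpha} for that edge, forcing $v_e \ge 1/(d+1) > 0$ at any equilibrium. Thus the only remaining task is to exhibit, for each $d\ge 1$ and each $\alpha>1$, at least one linearly-stable or critical equilibrium for the WARM on $S_d$.

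The case $d=1$ is trivial (the unique point $v=1$ is a stable equilibrium). For $d\ge 2$ and $\alpha\in(1,d+1]$, Corollary \ref{cor:all_equal_stable} says that $\vec{1}/d$ is linearly-stable for $\alpha<d+1$ and critical at $\alpha=d+1$. For the remaining range $\alpha>d+1$, where $\vec{1}/d$ is linearly unstable, I would invoke Theorem \ref{thm:general_star_graph}, the explicit characterisation of $\mc{S}_{\alpha}$ for the WARM star graph, which produces a symmetry-broken fully-supported stable equilibrium of the form where $k$ ``winner'' edges share a common larger weight and the remaining $d-k$ edges share a common smaller weight, for an appropriate $k=k(\alpha,d)\in\{1,\dots,d-1\}$.

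The main obstacle in this program is Theorem \ref{thm:general_star_graph} itself, whose proof I expect will parameterise non-symmetric equilibria of the star by two positive weights $a>b$ with $ka+(d-k)b=1$ satisfying the pair of equations $(d+1)a-1 = a^{\alpha}/S$ and $(d+1)b-1 = b^{\alpha}/S$ with $S=ka^{\alpha}+(d-k)b^{\alpha}$, track the persistence of such solution branches as $\alpha$ varies past $d+1$, and verify that the associated Jacobian has all eigenvalues with non-positive real parts along the branch bifurcating from $\vec{1}/d$. Granting that theorem, the present result follows at once from the subgraph reduction and the automatic full-support observation.
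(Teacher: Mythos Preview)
Your proposal is correct and follows essentially the same approach as the paper: reduce via Theorem \ref{thm:subgraphs} to individual star components, then invoke the explicit characterisation of $\Sa$ for star graphs (Theorem \ref{thm:general_star_graph}, together with Corollary \ref{cor:all_equal_stable} for the small-$\alpha$ regime) to produce a fully supported linearly-stable or critical equilibrium on each star. One small remark: your ``resp.\ critical'' phrasing in the reduction step is not literally an if-and-only-if (the whole is critical iff all components are stable-or-critical with at least one critical, not iff all are critical), but the direction you actually need --- that stable-or-critical on every component implies stable-or-critical on $G$ --- follows immediately from the block-diagonal structure of ${\bf D}$ established in the proof of Theorem \ref{thm:subgraphs}.
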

For large $\alpha$, this result can be extended to symmetric-whisker-forests, where each non-star component is symmetric (i.e.~$s=r$) due to the following result:
\begin{THM}[Symmetric whisker graphs]
\label{thm:symmetric_whisker}
For every symmetric ($s=r$) whisker graph $G$, there exists $\alpha_{\sss G}>1$ such that for all $\alpha>\alpha_{\sss G}$, $(\Sa)_{\sss G}$ is non-empty. Consequently,  any symmetric whisker-forest ${\bf G}$ on $G$ admits a $(G,\alpha)$-stable allocation, if $\alpha$ is sufficiently large (depending on ${\bf G}$).
\end{THM}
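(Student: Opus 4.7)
The plan is to construct an explicit equilibrium $\vec v^*(\alpha)$ using the full symmetry group of a symmetric whisker graph $G$ (swap of the two endpoints $u,v$ of the central edge $e_0$, together with permutations of the $r$ whiskers on each side), and then to prove linear stability by exploiting the same symmetry to block-diagonalise ${\bf D}(\vec v^*)$ into three scalar blocks that can be bounded asymptotically as $\alpha\to\infty$.

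\textbf{Step 1: a symmetric equilibrium.} I look for $\vec v^*$ of the form $v_{e_0}=x$ and $v_e=y$ for every whisker edge $e$, subject to $x+2ry=1$. Condition \ref{cond:Vuniform} gives $p_A=1/(2r+2)$ for the two ``star'' subsets $A_u,A_v$ consisting of the edges incident to $u$ resp.\ $v$, and for each singleton $\{e\}$ arising from a leaf. Plugging into \eqref{equil_alpha} and letting $\beta=x/y$, the two distinct equilibrium equations reduce to the single scalar relation $\beta^{\alpha-1}(2-\beta)=r+1$. The map $\beta\mapsto\beta^{\alpha-1}(2-\beta)$ vanishes at $0$ and $2$ and attains a maximum of order $2^\alpha/\alpha$, which exceeds $r+1$ once $\alpha$ is large, so there are two positive branches of solutions; I select the branch $\beta(\alpha)\in(1,2)$ with $\beta(\alpha)\to 2$ from below, which yields $x(\alpha)\to 1/(r+1)$ and $y(\alpha)\to 1/(2r+2)$.

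\textbf{Step 2: symmetry decomposition and eigenvalues.} The $2r$-dimensional tangent space to $\Delta_n$ at $\vec v^*$ decomposes under the symmetry group into three ${\bf D}$-invariant subspaces: (a) the fully symmetric direction $(\delta_{e_0},\delta_e)=(-2r,1)$ of dimension $1$; (b) the left--right antisymmetric direction $\delta_{e_0}=0$, $\delta_e=\pm 1$ according to side, of dimension $1$; and (c) the sum-zero perturbations within each side, of total dimension $2(r-1)$. Since left and right whiskers never co-occur in any subset $A$, this decomposition is automatic. Writing $S=x^\alpha+ry^\alpha$, differentiating \eqref{Fdef} and using that each singleton subset $\{e\}$ contributes $0$ to the diagonal entry (because $S_{\{e\}}-v_e^\alpha=0$), the three scalar eigenvalues are
\begin{align*}
\lambda_{(c)}&=-1+\frac{\alpha\,y^{\alpha-1}}{(2r+2)\,S},\qquad
\lambda_{(b)}=-1+\frac{\alpha\,x^\alpha y^{\alpha-1}}{(2r+2)\,S^2},\\
\lambda_{(a)}&=-1+\frac{\alpha\,x^{\alpha-1}y^{\alpha-1}(x+2ry)}{(2r+2)\,S^2}=-1+\frac{\alpha\,x^{\alpha-1}y^{\alpha-1}}{(2r+2)\,S^2},
\end{align*}
where the second equality in $\lambda_{(a)}$ uses the normalisation $x+2ry=1$. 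Substituting $x=\beta y$ with $\beta\to 2$ and $y$ bounded below, each correction term is of order $\alpha\beta^{\alpha-1}/(\beta^\alpha+r)^2\sim\alpha\cdot 2^{-\alpha-1}\to 0$, so each eigenvalue tends to $-1$ and is in particular strictly negative for all $\alpha>\alpha_{\sss G}$ with $\alpha_{\sss G}$ sufficiently large, giving $\vec v^*(\alpha)\in(\Sa)_{\sss G}$.

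\textbf{Main obstacle and the forest extension.} I expect the main delicacy to be the choice of root $\beta(\alpha)$, since the other branch $\beta(\alpha)\to 1$ can be shown to give $\lambda_{(a)}\to -1+\alpha/2$, hence an unstable equilibrium; only the $\beta\to 2$ branch---matching the heuristic that the central edge absorbs twice the weight of a whisker, as predicted by Conjecture \ref{con:whisker-forest}---is stable. The ``consequently'' clause of Theorem \ref{thm:symmetric_whisker} then follows from Theorem \ref{thm:subgraphs}(2): each component of a symmetric whisker-forest ${\bf G}$ is either a star (handled by Theorem \ref{thm:star_forest_admits}) or a symmetric whisker (handled by the previous paragraphs), and the rescaled stable equilibria on the components combine into a stable allocation of ${\bf G}$.
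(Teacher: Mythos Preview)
Your proof is correct and complete for Theorem \ref{thm:symmetric_whisker} as stated, but it follows a genuinely different route from the paper.

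\emph{Computing the eigenvalues.} The paper obtains the eigenvalues of ${\bf D}(\vec v^*)$ via a heavy block-determinant computation (Lemmas \ref{lem:determinant_whisker} and \ref{lem:determinant_whisker_2}), specialised to the symmetric case in Lemma \ref{lem:stable_symmetric_whisker}. You instead use the action of the automorphism group $(S_r\times S_r)\rtimes\Z_2$ of the symmetric whisker graph: since $\vec v^*$ is fixed by this group and $F$ is equivariant, ${\bf D}(\vec v^*)$ preserves each isotypic component; combined with the fact that ${\bf D}$ preserves the tangent hyperplane (because $\vec 1^{\mathrm T}{\bf D}=-\vec 1^{\mathrm T}$), this forces (a), (b), (c) to be eigendirections, and your three scalars coincide with the paper's $\xi_r x^{\alpha-1}y^{\alpha-1}-1$, $\xi_r x^{\alpha}y^{\alpha-1}-1$, and $\delta_r\xi_r y^{\alpha-1}-1$ respectively. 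Your route is shorter and more conceptual; the paper's determinant machinery, on the other hand, is set up in enough generality to handle the \emph{asymmetric} whisker graph as well.

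\emph{Verifying stability.} The paper (Theorem \ref{thm:symmetric_whisker2}) proves that the increasing branch $t_2(\alpha)$ (your branch $\beta\to 2$) is linearly stable for \emph{every} $\alpha$ above the exact bifurcation value $\alpha(r)$. It does this by showing that condition \eqref{condition1} is equivalent to $\partial t/\partial\alpha>0$ (a monotonicity argument reminiscent of Lemma \ref{star_graph_lemma_4}), and by checking condition \eqref{condition2} via the explicit inequality $(u/v)^{\alpha-1}>\alpha/2$. You instead observe that along the branch $\beta\to 2$ one has $\beta^\alpha\to\infty$, and hence each of $\lambda_{(a)},\lambda_{(b)},\lambda_{(c)}$ tends to $-1$; this gives \emph{some} $\alpha_{\sss G}$, which is all the theorem asks for, but not the sharp threshold $\alpha(r)$ that the paper obtains in Theorem \ref{thm:symmetric_whisker2}. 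Your identification of the unstable branch ($\beta\to 1$, giving $\lambda_{(a)}\sim -1+\alpha/2$) is also correct, and the ``consequently'' clause via Theorem \ref{thm:subgraphs}(2) and Theorem \ref{thm:star_forest_admits} matches the paper's logic exactly.
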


\subsection{Overview of the paper}
In Section \ref{sec:general_proofs} we prove Theorem \ref{thm:number_equi}, Lemma \ref{lem:symmetry_ones}, Proposition \ref{prp:all_equal_stable}, and Theorem \ref{thm:subgraphs}.  In Section \ref{sec:special cases} we obtain more detailed results on the existence of linearly-stable equilibria for various examples.    In Section \ref{sec:discussion} we discuss some open problems, in particular some related to Conjecture \ref{con:whisker-forest}.  The proof of Theorem \ref{thm:convergence} follows the standard approach and is relegated to Appendix \ref{sec:appendix}.


\section{Proofs of general results}
\label{sec:general_proofs}
In this section we prove the general results of Section \ref{sec:main results}, assuming Conditions \ref{cond:iid} and $(\alpha)$ throughout.  We opt for a proof of Theorem \ref{thm:subgraphs} instead of proving Lemma \ref{lem:reduction}.  
We therefore begin this section with the proof that  there are only finitely many stable equilibria.

\subsection{Proof of Theorem \ref{thm:number_equi}}
Fix $\alpha>1$.  
For $n=1$, the claim is trivial.  The proof proceeds via induction over $n$, assuming that the result holds for all $n'<n$.

Let $\vec{v}=(v_1,\dots,v_n)\in \Ea$ denote an equilibrium distribution, 
so that
\begin{equation}
F(\vec{v}) = \vec{0}, \label{a1}
\end{equation} 
where
\begin{equation}
F_i(\vec{v}) = -v_i + \sum_{A \ni i} p_{A}\frac{v_i^{\alpha}}{\sum_{k \in A} v_k^{\alpha}},\quad\qquad  \text{ for } i\in[n].
\end{equation}
We assume $v_i \neq 0$ for all $i\in[n]$. If an equilibrium is linearly stable 
for the system of $n$ equations and there is some $I\ne \varnothing$ such that $v_i=0$ for all $i \in I$, then it is linearly stable for the system on $[n] \setminus I$ (see Lemma \ref{lem:reduction} or \cite[Corollary 3.8]{BRS12}).

Let $A\subset [n]$ be non-empty.  Since $\alpha > 1$, using H\"older's inequality we have
\begin{equation}
\sum_{k \in A} v_k^{\alpha} \geq \biggl ( \sum_{k \in A} v_k \biggr )^{\alpha} |A|^{1-\alpha}. \label{m1}
\end{equation}
By the law of large numbers, the set $A$ is chosen a proportion $p_{A}$ of the time (in the limit as $t\ra \infty$).  It follows that colours contained in $A$ are chosen at least $p_{A}$ proportion of the time, hence
\begin{equation}
\sum_{k\in A} v_k \geq p_{A}.\label{m2}
\end{equation} 
Equations \eqref{m1} and \eqref{m2} yield
\begin{equation}
\sum_{k \in A} v_k^{\alpha} \geq p^{\alpha}_{A} |A|^{1-\alpha}.\label{abound}
\end{equation}
Inserting this into \eqref{a1} we obtain
\begin{align*}
v_{i}^{1-\alpha}=\sum_{A\ni i}p_A\frac{1}{\sum_{k\in A}v_k^{\alpha}}\le \sum_{A\ni i}p_A p^{-\alpha}_{A} |A|^{\alpha-1}.
\end{align*}
Thus,
\begin{equation*}
v_i \geq \biggl (\sum_{A \ni i} (p_{A}/ |A|)^{1-\alpha} \biggr )^{1/(1-\alpha)}.
\end{equation*}
This shows that there exists a (model dependent) $\varepsilon > 0$ such that there is no $\vec{v}\in \Ea$ satisfying $0 < v_i < \varepsilon$ for some $i \in [n]$. It remains to prove that for any $\varepsilon>0$ there are only finitely many $\vec{v}\in \Sa$ satisfying $v_i \geq \varepsilon$ for all $i \in [n]$. 

Fix $\varepsilon>0$, and choose $\delta\in (0,\frac{\pi}{2\alpha})$ and define $H^n \subset \c^n$ to be the Cartesian product of $n$ copies of the open complex domain
\begin{equation*}
H:=\biggl\{ z \in \mathbb{C}: \frac{\varepsilon}{2} < |z| < 2, |\arg(z)| < \delta \biggr\}.
\end{equation*}
Since, for $z\in H$,
\begin{align*}
|\arg(z^{\alpha})|=\alpha|\arg(z)|<\alpha \delta<\pi/2,
\end{align*}
we see that $\re(z^{\alpha})>0$ for all $z \in H$.  Therefore for non-empty $A$, 
$\re\big[\sum_{k\in A} v_k^{\alpha}\big]>0$ for $\vec{v}\in H^n$, in particular, all functions
$\vec{v}\mapsto \sum\limits_{k\in A} v_k^{\alpha}$ are analytic and zero-free in $H^n$, which shows that the functions 
$$
\vec{v} \mapsto \frac{v_i^{\alpha}}{\sum\limits_{k\in A} v_k^{\alpha}}
$$
are also analytic in $H^n$, so finally we conclude that the functions $F_i(\vec{v})$ are analytic in $H^n$. 

Next, define the map
$F\colon H^n \mapsto \c^n$ by 
$
F(\vec{v})=(F_1(\vec{v}),F_2(\vec{v}),\dots,F_n(\vec{v}))
$ and the set 
\begin{align*}
{\mathcal H}:=\{\vec{v}\in H^n \colon F(\vec{v})=\vec{0}\; {\textnormal{ and }} {\textnormal{det}}\left[{\bf D}(\vec{v})\right]\ne 0\}.
\end{align*}
Clearly $\Sa\subset {\mathcal H}$. Our goal is to show that (i) ${\mathcal H}$ is a set of isolated points and (ii) it does not have accumulation points in the interior of the domain $H^n$. 

\vspace{0.25cm}
\noindent
To prove (i),
let $\vec{w} \in {\mathcal H}$. Since $F(\vec{w})=\vec{0}$ and ${\textnormal{det}}\left[{\bf D}(\vec{w})\right]\ne 0$, due to the Implicit Function Theorem (see \cite[Theorem 2, page 40]{Shabat}) there exists a biholomorphic map between some neighborhoods $U \ni \vec{w}$ and $V\ni \vec{0}$ (that is, a bijective holomorphic function whose inverse is also holomorphic). Since the map is bijective, there are no other solutions to the system $F_i(\vec{v})=0$, $i\in[n]$ in $U$, which shows that each element of ${\mathcal H}$ must be an isolated point.

To prove (ii), let us assume the converse, i.e., there exists a point $\vec{w} \in H^n$ which is an accumulation point of ${\mathcal H}$. Define
$$
{\mathcal Z}:=\{\vec{v} \in H^n\colon F(\vec{v})=\vec{0}\},
$$
so ${\mathcal Z}$ is an {\it analytic set} in the sense of \cite[Definition 1, page 129]{Shabat}, and clearly ${\mathcal H}\subseteq {\mathcal Z}$. According to \cite[Theorem 2.2, page 52]{Nishino}, there exists a neighborhood $\Delta \subset H^n$ of the point $\vec{w}$, such that 
 the analytic set $\Delta \cap {\mathcal Z}$ can be decomposed into a {\it finite} number of pure-dimensional analytic sets. Pure-dimensional means that the set has the same dimension at each point. One of these pure-dimensional analytic sets must have dimension zero (since we have
 assumed that $\vec{w}$ is an accumulation point for isolated points in ${\mathcal H}$, and isolated points are zero-dimensional). It is also clear that this zero-dimensional analytic set must have an accumulation point at $\vec{w}$. Now we use \cite[Theorem 6 on page 135]{Shabat}, which says that this is impossible: any zero-dimensional analytic set in $\Delta$ cannot have limit points inside $\Delta$. Therefore, we have arrived at a contradiction.

So far we have proved that the set ${\mathcal H}$ consists of isolated points and does not have accumulation points in the interior of $H^n$. 
Since the  set 
\beqq
B:=\{ \vec{v} \in \c^n \colon \im(v_i)=0, \;\;\; \varepsilon\le \re(v_i) \le 1  \} 
\eeqq 
is compact in $\c^n$, we conclude that the set $B\cap H^n$ is finite. Since stable equilibria are elements of $B \cap H^n$,  this shows that we can have only finitely many $\vec{v}\in \Sa$ satisfying $v_i>\varepsilon$ for each $i$. \hfill\Qed
\blank{
For all $z \in D$ the real part of $z^{\alpha}$ is strictly positive since $|\arg(z^{\alpha})| < \frac{\pi}{2}$ and hence is the real part of $\sum_{k \in A_j} v_k^{\alpha}$.  The functions $F(\vec{v})=(F_1(\vec{v}),...,F_n(\vec{v}))$ are analytic in 
$D_{\alpha}^n$ since sums, products and ratios (with non-zero denominator) of analytic functions are analytic. Let us define the set $\mathcal{S}$ including in particular all linearly stable equilibria
\begin{equation*}
\mathcal{S}:= \{ \vec{v} \in D^n_{\alpha}: F(\vec{v})=\vec{0} \text{ and }  det(Jac(\vec{v})) \neq 0 \}
\end{equation*}
where $Jac(\cdot)$ denotes the Jacobian matrix related to $(D_{i,k}(\vec{v}))_{i,k}$.
In the following we show that $\mathcal{S}$ consists of isolated points and it does not have accumulation points in the interior of $D^n_{\alpha}$. 
For the first part let $\vec{w} \in \mathcal{S}$, then $F(\vec{w})=0$ and $det(Jac(\vec{w})) \neq 0$, due to the implicit function theorem there exists a biholomorphic map between neighbourhoods $U \ni \vec{w}$ and $V \ni \vec{0}$.
Since the map is bijective the solution to $F(\vec{w})$ is unique in $U$, hence each point in $\mathcal{S}$ must be an isolated point. In the remainder we show that it has no accumulation points in the interior of the domain. Let us assume there exists an accumulation point $w \in D^n_{\alpha}$ in $\mathcal{S}$ and define $\mathcal{S}_0 \supset \mathcal{S}$ as
\begin{equation*}
\mathcal{S}_0:= \{ \vec{v} \in D^n_{\alpha}: F(\vec{v})=\vec{0} \}
\end{equation*}
 then $\mathcal{S}_0$ is an \textit{analytic set} (page 129 \cite{N01}). According to theorem 2.2 in \cite{N01} there exists a neighbourhood $\mathcal{N}_{\alpha} \subset D_{\alpha}^n$ of $\vec{w}$ such that the analytic set $\mathcal{N}_{\alpha}\cap \mathcal{S}_0$ can be decomposed into a finite number of analytic sets which have the same dimension at each points. Since we assumed that $\vec{w}$ is an accumulation point, one of the sets has to have dimension zero and also an accumulation point at $\vec{w}$. But any zero-dimensional analytic set in $\mathcal{N}_{\alpha}$ cannot have limit points in the interior of $D^n_{\alpha}$ (theorem 6 on page 135 in \cite{S92}), so we arrive at a contradiction. 
We define
\begin{equation*}
\mathcal{B}_{\alpha}:=\{ \vec{v} \in \mathcal{C}^n:\mathfrak{I}(v_i)=0;  \varepsilon(\alpha) \leq \mathfrak{R}(v_i) \leq 1\}
\end{equation*}
 it is compact in $\mathbb{C}^n$. Since $\mathcal{S}$ consists of isolated points and has  no accumulation points inside $D^n_{\alpha}$ and $\mathcal{B}_{\alpha}$ is compact, we conclude $\mathcal{B}_{\alpha} \cap D^n_{\alpha}$ is finite. It follows that there can be only finitely many stable equilibria satisfying $v_i \geq \varepsilon(\alpha)$ for all $i=1,...,n$.
\Qed 
}

\subsection{Proof of Lemma \ref{lem:symmetry_ones} and Proposition \ref{prp:all_equal_stable}}

\noindent {\em Proof of Lemma \ref{lem:symmetry_ones}.}  
Assume that Condition \ref{cond:symmetry} holds.  Then, for $\vec{v}=\vec{1}/n$, the right hand side of \eqref{equil_alpha} becomes
\begin{align}
&\sum_{A\ni i}\frac{p_A}{|A|}=\sum_{m=1}^{n}\sum_{\stackrel{A\ni i:}{|A|=m}}\frac{p_A}{m}=\sum_{m=1}^{n}\frac{a_mp_m}{m},
\label{equil_equal}
\end{align}
which does not depend on $i\in [n]$.  Since these quantities sum to 1, it follows that the right hand side of \eqref{equil_alpha} is equal to $1/n$ for each $i$, which proves that $\vec{1}/n$ is an equilibrium (i.e., Lemma \ref{lem:symmetry_ones}).\hfill\Qed

Recall that the adjugate matrix $\adj{\bf A}$ of a square matrix ${\bf A}$ is given by $\adj{\bf A}={\bf C}^{\mathrm{T}}$, i.e., the transpose of the cofactor matrix ${\bf C}$ of ${\bf A}$.
Recall that if ${\bf A}$ is a diagonal matrix with entries $A_{ii}$, then its cofactor matrix is  a diagonal matrix ${\bf C}$ with $C_{ii}=\prod_{j\ne i}A_{jj}$, and its adjugate matrix is a diagonal matrix $\adj{\bf A}={\bf C}^{\mathrm{T}}={\bf C}$.
In order to prove Proposition \ref{prp:all_equal_stable}, we will use the following modification of the Matrix Determinant Lemma, which we have not found in the literature (although we expect that it is well known).
 \begin{LEM}[Modified Matrix Determinant Lemma]
\label{lem:MMDL}
If ${\bf R}\in \R^{n\times n}$ and $\vec{y},\vec{w}\in \R^n$ are column vectors then
\begin{align}
\det({\bf R}+\vec{y}\vec{w}^{\mathrm{T}})=\det({\bf R})+\vec{w}^{\mathrm{T}}\adj({\bf R})\vec{y}.
\label{MMDL}
\end{align}
\end{LEM}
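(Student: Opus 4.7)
The plan is to first establish the identity in the invertible case by a direct reduction to the classical Matrix Determinant Lemma, and then extend to arbitrary $\mathbf{R}$ by a polynomial/continuity argument.

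First I would note that when $\mathbf{R}$ is invertible, the classical Matrix Determinant Lemma states
\begin{align*}
\det(\mathbf{R}+\vec{y}\vec{w}^{\mathrm{T}}) = \det(\mathbf{R})\bigl(1+\vec{w}^{\mathrm{T}}\mathbf{R}^{-1}\vec{y}\bigr).
\end{align*}
Substituting the standard identity $\mathbf{R}^{-1}=\adj(\mathbf{R})/\det(\mathbf{R})$ and multiplying out yields
\begin{align*}
\det(\mathbf{R}+\vec{y}\vec{w}^{\mathrm{T}}) = \det(\mathbf{R})+\vec{w}^{\mathrm{T}}\adj(\mathbf{R})\vec{y},
\end{align*}
which is exactly \eqref{MMDL} in the invertible case.

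Second, I would extend to arbitrary (possibly singular) $\mathbf{R}$ by a density argument. Both sides of \eqref{MMDL} are polynomial expressions in the $n^2+2n$ real variables comprising the entries of $\mathbf{R}$, $\vec{y}$ and $\vec{w}$: the determinant on the left is a polynomial in these entries, and on the right the adjugate is a matrix of $(n-1)\times(n-1)$ minors of $\mathbf{R}$, hence also polynomial. Since they agree on the Zariski-dense (equivalently, nonempty open) subset $\{\mathbf{R}:\det(\mathbf{R})\ne 0\}\times\mathbb{R}^n\times\mathbb{R}^n$, they agree on all of $\mathbb{R}^{n\times n}\times \mathbb{R}^n\times \mathbb{R}^n$. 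Equivalently, for any $\mathbf{R}$ the matrices $\mathbf{R}+\varepsilon \mathbf{I}$ are invertible for all but finitely many $\varepsilon$, the identity holds for each such perturbation, and both sides are continuous in $\varepsilon$, so we may let $\varepsilon\to 0$.

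There is no real obstacle here: the content is just the observation that the classical Matrix Determinant Lemma, which a priori requires invertibility, becomes an identity of polynomials once one clears the factor of $\det(\mathbf{R})$ using $\adj(\mathbf{R})=\det(\mathbf{R})\mathbf{R}^{-1}$, and such polynomial identities extend automatically.
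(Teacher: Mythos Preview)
Your proof is correct and follows essentially the same approach as the paper: reduce to the classical Matrix Determinant Lemma in the invertible case via $\mathbf{R}^{-1}=\adj(\mathbf{R})/\det(\mathbf{R})$, then extend to singular $\mathbf{R}$ by perturbing to $\mathbf{R}-\varepsilon\mathbf{I}$ (invertible for all but finitely many $\varepsilon$) and passing to the limit using continuity of determinants and adjugates. Your additional remark that both sides are polynomials agreeing on a Zariski-dense set is a slightly cleaner way to phrase the same extension, but the content is identical.
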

\proof If ${\bf R}$ is invertible, then the matrix determinant lemma gives
\begin{align*}
\det({\bf R}+\vec{y}\vec{w}^{\mathrm{T}})=(1+\vec{w}^{\mathrm{T}}{\bf R}^{-1}\vec{y})\det({\bf R})=\det({\bf R})+\vec{w}^{\mathrm{T}}{\bf R}^{-1}\det({\bf R})\vec{y}=\det({\bf R})+\vec{w}^{\mathrm{T}}\adj({\bf R})\vec{y}.\nonumber
\end{align*}
If ${\bf R}$ is not invertible then ${\bf R}$ has some eigenvalues that are zero (and possibly some non-zero) and there exists some $\varepsilon_0$ (corresponding to the smallest magnitude-non-zero eigenvalue) such that no $\varepsilon\in (0,\varepsilon_0)$ is an eigenvalue for ${\bf R}$, i.e.,
$\det({\bf R} - \varepsilon {\bf I})\ne 0$ for all such $\varepsilon$.  Therefore, 
${\bf R} - \varepsilon {\bf I}$ is invertible for any such $\varepsilon$.  It follows that for all $\varepsilon\in (0,\varepsilon_0)$ 
\begin{align}
\det({\bf R}- \varepsilon {\bf I}+\vec{y}\vec{w}^{\mathrm{T}})=\det({\bf R}- \varepsilon {\bf I})+\vec{w}^{\mathrm{T}}\adj({\bf R}- \varepsilon {\bf I})\vec{y}.\label{epsilon}
\end{align}
We obtain the desired conclusion by taking the limit as $\varepsilon\downarrow 0$ on both sides of \eqref{epsilon}, and using the facts that all entries of $\adj({\bf R})$ are just sums and differences of minors (determinants of submatrices), and determinants are continuous functions of ${\bf R}$ (in the natural sense).
\hfill\Qed

\noindent {\em Proof of Proposition \ref{prp:all_equal_stable}}.   
When $W(x)=x^{\alpha}$, \eqref{Fdef} becomes
\begin{align*}
F(\vec{v})_i=-v_i+\sum_{A\ni i}p_A\frac{v_i^{\alpha}}{\sum_{j \in A}v_j^{\alpha}},
\end{align*}
so that
\begin{align}
\label{Dalpha_ii}
D_{i,i}(\vec{v})=
-1+\alpha v_i^{\alpha-1}\sum_{A\ni i}p_A
\frac{\sum_{j \in A}v_j^{\alpha}-v_i^{\alpha}}{\left(\sum_{j \in A}v_j^{\alpha}\right)^2},
\end{align}
and, for $k\ne i$,
\begin{align}
\label{Dalpha_ik}
D_{i,k}(\vec{v})=-\alpha v_{k}^{\alpha-1}v_i^{\alpha}\sum_{A \ni i,k} p_A\frac{1}{\left(\sum_{j \in A}v_j^{\alpha}\right)^2}.
\end{align}
When $\vec{v}=\vec{1}/n$, this reduces to
\begin{align}
\label{D_vec1n}
D_{i,i}(\vec{1}/n)=&
-1+\alpha n\sum_{A\ni i}p_A
\frac{|A|-1}{|A|^2},
\\
D_{i,k}(\vec{1}/n)=&
-\alpha n\sum_{A\ni i,k}p_A
\frac{1}{|A|^2}.
\end{align}

Assume that Condition \ref{cond:strong_symmetry} holds. Then, \eqref{D_vec1n} can be written as
\begin{align}
\label{Dalpha_baricentre}
\nonumber
D_{i,i}(\vec{1}/n)=&
-1+\alpha n\sum_{m=1}^np_m
\sum_{A:|A|=m,i\in A}\frac{m-1}{m^2}\\ \nonumber
=&-1+\alpha n\sum_{m=2}^n\frac{p_m}{m^2}
{n-1 \choose m-1}(m-1)\\
\equiv & -1+\beta,\\ \nonumber
D_{i,k}(\vec{1}/n)=&-\alpha n\sum_{m=2}^n \frac{p_m}{m^2}{n-2 \choose m-2}, \quad \text{ for } k\ne i\\ \nonumber
\equiv&\delta<0.
\end{align}
To compute the eigenvalues of ${\bf D}(\vec{1}/n)$, observe that 
\begin{align*}
{\bf H}\equiv {\bf D}-\lambda{\bf I}=(-(1+\lambda)+\beta-\delta){\bf I}+\vec{1} (\delta\vec{1})^\mathrm{T}.
\end{align*}

Hence, by Lemma \ref{lem:MMDL}, the determinant of ${\bf H}$ is 
\begin{align*}
(-(1+\lambda)+\beta-\delta)^n+\sum_{i=1}^n\delta(-(1+\lambda)+\beta-\delta)^{n-1}.
\end{align*}
This is equal to zero when 
\begin{align*}
\lambda=\beta-\delta-1, \qquad \text{ or }\qquad \lambda=(n-1)\delta+\beta-1=-1.
\end{align*}

The first eigenvalue satisfies
\begin{align*}
\lambda=&\alpha n\sum_{m=2}^n\frac{p_m}{m^2}
{n-1 \choose m-1}(m-1)+\alpha n\sum_{m=2}^n \frac{p_m}{m^2}{n-2 \choose m-2}-1\\
=&\alpha n^2\sum_{m=2}^n\frac{p_m}{m^2}{n-2 \choose m-2}-1,
\end{align*}
which is continuous and increasing in $\alpha$, and is $<0$ if and only if 
\begin{align*}
\alpha<\frac{1}{n^2\sum_{m=2}^n\frac{p_m}{m^2}{n-2 \choose m-2}},
\end{align*}
as required.\hfill\Qed

\subsection{Proof of Theorem \ref{thm:subgraphs}}
Under Condition \ref{cond:Vuniform}, $p_A=1/|V|$ for every $A$ that is the set of edges incident to some vertex, and of course every edge $e$ is an edge in exactly two such $A$.
For a vertex $x$ and an edge $e$ write $x\in e$ if $e=(x,x')=(x',x)$ for some $x'\in V$ (i.e.~if $x$ is an endvertex of $e$).  Then the equilibrium equation for $e \notin {\bf E}$ is $0=0$, while for $e\in E_j$ it is
\begin{align}
v_e=&\sum_{A \ni e}p_A\frac{v_\e^{\alpha}}{\sum_{e'\in A}v_{e'}^{\alpha}}
=\sum_{\stackrel{x\in V:}{x \in e}}\frac{1}{|V|}\frac{v_\e^{\alpha}}{\sum_{\stackrel{e'\in E:}{x\in e'}}v_{e'}^{\alpha}}\label{subs0}\\
=&\sum_{\stackrel{x\in V_j:}{x \in e}}\frac{1}{|V|}\frac{v_\e^{\alpha}}{   \sum_{\stackrel{e'\in E_j:}{x\in e'}}v_{e'}^{\alpha}+        \sum_{\stackrel{e'\notin E_j:}{x\in e'}}v_{e'}^{\alpha}},\label{subs1}
\end{align}
where 
Let $e'\notin E_j$ but $x\in e'$ and $x\in V_j$.  Then we have by definition of ${\bf G}$ that $x\notin V_i$ for $i\ne j$, so $e'\notin E_i$ for any $i$.  This means that $v_{e'}=0$.  It follows that the second sum in the denominator of \eqref{subs1} vanishes, so 
\begin{align}
v_e=&\sum_{\stackrel{x\in V_j:}{x \in e}}\frac{1}{|V|}\frac{v_\e^{\alpha}}{   \sum_{\stackrel{e'\in E_j:}{x\in e'}}v_{e'}^{\alpha}}, \quad \text{ and therefore }\quad
\frac{|V|}{|V_j|}v_e=\sum_{\stackrel{x\in V_j:}{x \in e}}\frac{1}{|V_j|}\frac{(\frac{|V|}{|V_j|}v_e)^{\alpha}}{   \sum_{\stackrel{e'\in E_j:}{x\in e'}}(\frac{|V|}{|V_j|}v_{e'})^{\alpha}},\label{subs1b}
\end{align}
which is \eqref{subs0} for the graph $G_j\ni e$ and appropriately rescaled $v$ components.  This proves the first claim.

For the second claim, note that if $e\in E_j$ and $e'\notin E_j$ then for $\vec{v}$ as in the theorem $F(\vec{v})_e$ does not depend on $v_e'$.  Thus for such $e,e'$ we have $D_{e,e'}=0$.  It follows that ${\bf D}(\vec{v})$ is a block diagonal matrix of the form 
\begin{align*}
{\bf D} = \begin{pmatrix} 
{\bf D^{\sss (1)}} & 0 & \cdots & 0 \\ 0 & {\bf D^{\sss (2)}} & \cdots &  0 \\
\vdots & \vdots & \ddots & \vdots \\
0 & 0 & \cdots &{\bf D^{\sss (k+1)}}
\end{pmatrix},
\end{align*}
where ${\bf D^{\sss (i)}}$ is the ${\bf D}$ matrix for $G_i$ for $i\le k$ and where ${\bf D^{\sss (k+1)}}=-{\bf I}_{|E\setminus {\bf E}|}$, and ${\bf I}_m$ denotes the identity matrix of dimension $m$.  Thus the eigenvalues of ${\bf D}$ are simply those of each ${\bf D^{\sss (i)}}$, $i\in[k+1]$ combined.  Since the eigenvalue of ${\bf D^{\sss (k+1)}}$ is $-1<0$, the result follows.\hfill\Qed

\section{Special cases}\label{sec:special cases}
In this section, we examine some of our examples more carefully, beginning with one of the non-graphical WARMs.
\subsection{Fixed $m$, uniform $A_t$ model}
\label{sec:uniform}
Recall that for this model, defined in Example \ref{exa:fixedm}, at least $n-m+1$ colours must be each chosen a positive proportion of the time.  It is not too hard to prove that with positive probability $m-1$ of the colours are never chosen, from which it follows that with positive probability exactly $n-m+1$ colours are each chosen a positive proportion of the time.

From \eqref{equil_alpha} $\vec{v}$ is an equilibrium if and only if
\begin{align}
v_i=&{n \choose m}^{-1}\sum_{\stackrel{A\ni i:}{|A|=m}}\frac{v_i^{\alpha}}{\sum_{j\in A}v_j^{\alpha}}.\label{equil_fixed}
\end{align}
The first claim of Corollary \ref{cor:all_equal_stable} follows directly from Proposition \ref{prp:all_equal_stable} with $p_m={n \choose m}^{-1}$ for each of the ${n \choose m}$ subsets of size $m$.   The following extension of this result could be obtained from Lemmas \ref{lem:reduction} and \ref{lem:symmetry_ones}, and  Proposition \ref{prp:all_equal_stable}, by keeping track of the $p'_s$ for various values of $s$ after $n-k$ colours have been removed.  However, a direct proof is also not too hard, as we will see in the following.

Here and elsewhere, we use the notation $(u)_k$ to denote the vector $(u,\dots, u)\in \R^k$.

\begin{COR}[Stability in the fixed $m$, uniform $A_t$ model]
\label{cor:cond_alpha}
Let $k\ge n-m+1$.  Then $\vec{v}=((1/k)_k,(0)_{n-k})\in \mc{E}_{\alpha}$ for all $\alpha$, and $\vec{v}\in \mc{S}_{\alpha}$ if and only if 
\[\alpha<\dfrac{\displaystyle{n \choose m}}{k^2 \displaystyle{\sum_{r=(m-k)\vee 0}^{\sss (n-k)\wedge (m-2)}{n-k \choose r}\frac{1}{(m-r)^2}{k-2\choose m-r-2}}},\]
while $\vec{v}$ is critical if equality holds.
\end{COR}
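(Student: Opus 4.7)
The plan is to verify directly that $\vec{v}=((1/k)_k,(0)_{n-k})$ is an equilibrium, and then to exploit a block‑diagonal structure of the Jacobian ${\bf D}(\vec{v})$ to reduce the stability analysis to an eigenvalue problem essentially identical to the one solved in Proposition~\ref{prp:all_equal_stable}. For the equilibrium equation \eqref{equil_fixed} at a coordinate $i\le k$, I would parametrise subsets $A\ni i$ of size $m$ by $\ell=|A\cap[k]|\ge 1$: there are $\binom{k-1}{\ell-1}\binom{n-k}{m-\ell}$ such $A$, and $\sum_{j\in A}v_j^{\alpha}=\ell/k^{\alpha}$. The identity $\binom{k-1}{\ell-1}/\ell=\binom{k}{\ell}/k$ together with Vandermonde's convolution reduces the right-hand side of \eqref{equil_fixed} to $\tfrac{1}{k}\binom{n}{m}^{-1}\bigl[\binom{n}{m}-\binom{n-k}{m}\bigr]=1/k$, using $n-k\le m-1$ (forced by $k\ge n-m+1$) to eliminate the last term. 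For $i>k$ both sides vanish, and no $A$ can lie entirely in $[n]\setminus[k]$ by the same bound, so no indeterminate form arises.

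Using \eqref{Dalpha_ii}--\eqref{Dalpha_ik}, I would next observe that since $\alpha>1$, each of the factors $v_i^{\alpha-1}$ and $v_i^{\alpha}$ kills every entry of ${\bf D}(\vec{v})$ involving a coordinate $i>k$, except for the diagonal entries $D_{i,i}=-1$. Hence ${\bf D}(\vec{v})$ is block diagonal with an upper block ${\bf D}_1\in\R^{k\times k}$ and a lower block $-{\bf I}_{n-k}$ contributing $n-k$ eigenvalues equal to $-1$. The symmetry of $\vec{v}$ in its first $k$ coordinates forces ${\bf D}_1$ to have constant diagonal $-1+\beta$ and constant off-diagonal $\delta$, so that ${\bf D}_1=(-1+\beta-\delta){\bf I}+\delta\vec{1}\vec{1}^{\mathrm{T}}$. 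Lemma~\ref{lem:MMDL}, applied exactly as in the proof of Proposition~\ref{prp:all_equal_stable}, then gives eigenvalues $\beta-\delta-1$ (with multiplicity $k-1$) and $\beta+(k-1)\delta-1$.

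To finish, I would substitute $\vec{v}$ into \eqref{Dalpha_ii}--\eqref{Dalpha_ik} and again parametrise by $\ell$; the identity $(\ell-1)\binom{k-1}{\ell-1}=(k-1)\binom{k-2}{\ell-2}$ immediately yields $\beta=-(k-1)\delta$, so the second eigenvalue collapses to $-1$, and stability is controlled by $\beta-\delta-1=-k\delta-1$. Explicitly,
\[
-k\delta=\alpha k^2\binom{n}{m}^{-1}\sum_{\ell=2}^{m}\binom{k-2}{\ell-2}\binom{n-k}{m-\ell}\frac{1}{\ell^{2}},
\]
and the substitution $r=m-\ell$, with the ranges enforced by the non-vanishing of $\binom{n-k}{r}$ and $\binom{k-2}{m-r-2}$ giving $r\in[(m-k)\vee 0,(n-k)\wedge(m-2)]$, yields the displayed stability condition, with equality corresponding to the critical case. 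The main technical obstacle is keeping the binomial identities and summation ranges straight; the structural observation that makes the proof clean is that when $\alpha>1$ the null coordinates of $\vec{v}$ decouple from the non-null ones in ${\bf D}(\vec{v})$, reducing the problem to a single scalar inequality.
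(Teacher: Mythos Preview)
Your proposal is correct and follows essentially the same route as the paper: block-diagonalise ${\bf D}(\vec{v})$ into a $(k\times k)$ constant-diagonal/constant-off-diagonal block and $-{\bf I}_{n-k}$, then apply Lemma~\ref{lem:MMDL} exactly as in Proposition~\ref{prp:all_equal_stable} to extract the two eigenvalues and reduce to the scalar inequality. The only cosmetic difference is that you parametrise by $\ell=|A\cap[k]|$ whereas the paper uses $r=|A\cap([n]\setminus[k])|=m-\ell$, and you spell out the equilibrium verification via Vandermonde, which the paper leaves implicit.
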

\proof
With $\vec{v}$ of the given form we have that \eqref{Dalpha_ii}-\eqref{Dalpha_ik} reduces to $D_{i,i}=-1$ and $D_{i,\ell}=D_{\ell, i}=0$ for $i>k$. 
For $i\le k$, using the fact that $n-k\le m-1$, and that $m-r-1=0$ if $r=m-1$ we have with $s'=(n-k)\wedge (m-2)$,
\begin{align*}
D_{i,i}=&-1+{n \choose m}^{-1}\alpha  k    \sum_{r=s}^{s'}{k-1\choose m-r-1}{n-k \choose r}\frac{m-r-1}{(m-r)^2},\\
D_{i,\ell}=&-{n \choose m}^{-1}\alpha  k  \sum_{r=s}^{s'}{k-2\choose m-r-2}{n-k \choose r}\frac{1}{(m-r)^2}.
\end{align*}
Using Lemma \ref{lem:MMDL},
\begin{align*}
\det({\bf D}-\lambda{\bf I})=(-(1+\lambda))^{n-k})(a^{k}+ba^{k-1}k)=(-(1+\lambda))^{n-k}a^{k-1}(a+bk),
\end{align*}
with 
\begin{align*}
b=&-{n \choose m}^{-1}\alpha  k  \sum_{r=s}^{s'}{k-2\choose m-r-2}{n-k \choose r}\frac{1}{(m-r)^2}\\
a=&-(1+\lambda)+{n \choose m}^{-1}\alpha  k    \sum_{r=s}^{s'}{k-1\choose m-r-1}{n-k \choose r}\frac{m-r-1}{(m-r)^2}-b\\
=&-(1+\lambda)+
{n \choose m}^{-1}\alpha  k^2    \sum_{r=s}^{s'}{n-k \choose r}\frac{1}{(m-r)^2}{k-2\choose m-r-2}.
\end{align*}
The term $a+bk$ can be computed explicitly and equals $-(1+\lambda)$, i.e., $\lambda=-1$.

Note that $k-1\ge 1$ since $m<n$, so it remains to consider the case $a=0$, i.e.,
\begin{align*}
\lambda=-1+{n \choose m}^{-1}\alpha  k^2    \sum_{r=s}^{s'}{n-k \choose r}\frac{1}{(m-r)^2}{k-2\choose m-r-2},
\end{align*}
which is continuous and increasing in $\alpha$ and is negative if and only if 
\begin{align*}
\alpha<\dfrac{{n \choose m}}{k^2 \sum_{r=s}^{s'}{n-k \choose r}\frac{1}{(m-r)^2}{k-2\choose m-r-2}}.
\end{align*}
\qed


\subsection{Star graph}
\label{sec:star}
Throughout this section, we consider a WARM star graph on $n$ edges.  
First we describe the situation where $n=2$ (which is the same as the simplest line graph, and which also corresponds (after a time-change) to Example \ref{exa:bernoulli} with $n=2$ and $p=1/2$).
\begin{THM}[Equilibria and stability for star graph with two edges]
\label{thm_line_graph_two_edges}
For the star graph with two edges the following is true:  For $\alpha=3$,  $\Ea=\{(1/2,1/2)\}$   and this equilibrium is critical, while 
for every $\alpha\neq 3$ there exists a unique $(v,u)\in \Sa$, where $v=v(\alpha)\ge 1/2$. Moreover, 
$v(\alpha)$ is a continuous function of $\alpha$, that is strictly increasing for $\alpha>3$ from $v(3)=1/2$ to $v(+\infty)=2/3$, and such that $v(\alpha)=1/2$ for $\alpha<3$.
\end{THM}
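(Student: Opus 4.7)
The plan is to reduce the analysis to one dimension by parameterising $\vec v = (v,1-v)$ and exploiting the $v \leftrightarrow 1-v$ symmetry. For the two-edge star graph (three vertices: one centre, two leaves), Condition \ref{cond:Vuniform} gives $p_{\{1\}} = p_{\{2\}} = p_{\{1,2\}} = 1/3$, so \eqref{equil_alpha} becomes
\begin{equation}
v = \tfrac{1}{3} + \tfrac{1}{3}\cdot\tfrac{v^\alpha}{v^\alpha + (1-v)^\alpha}, \label{plan_eq}
\end{equation}
equivalently $v^\alpha(2-3v) = (1-v)^\alpha(3v-1)$. The right-hand side of \eqref{plan_eq} lies in $[1/3, 2/3]$ for any $v$, so every equilibrium has $v \in [1/3, 2/3]$; $v = 1/2$ is always a solution; and by the $v \leftrightarrow 1-v$ symmetry it suffices to search for further roots in $v \in (1/2, 2/3)$, where the equation rearranges to $\alpha = \phi(v) := N(v)/D(v)$ with $N(v) := \log((3v-1)/(2-3v))$ and $D(v) := \log(v/(1-v))$.

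The main step is to prove that $\phi$ is a strictly increasing continuous bijection from $(1/2, 2/3)$ onto $(3, \infty)$. Continuity and the endpoint limits $\phi(1/2^+) = 3$ (by L'H\^opital) and $\phi((2/3)^-) = +\infty$ are routine. For the monotonicity, the simplifying move is the rational substitution $v = 1/2 + y/2$ with $y \in (0, 1/3)$, under which $v/(1-v) = (1+y)/(1-y)$ and $(3v-1)/(2-3v) = (1+3y)/(1-3y)$, yielding
\[
\phi(v) = \frac{\operatorname{arctanh}(3y)}{\operatorname{arctanh}(y)}.
\]
A short calculation shows that $\phi'(y) > 0$ is equivalent to $\psi(y) := 3(1-y^2)\operatorname{arctanh}(y) - (1-9y^2)\operatorname{arctanh}(3y) > 0$ on $(0, 1/3)$. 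Since $\psi(0) = 0$ and $\psi'(y) = 6y\bigl(3\operatorname{arctanh}(3y) - \operatorname{arctanh}(y)\bigr) > 0$ there (as $\operatorname{arctanh}$ is strictly increasing and positive on $(0,1)$), the monotonicity follows. Hence \eqref{plan_eq} has no asymmetric root when $\alpha \le 3$, and a unique root $v(\alpha) \in (1/2, 2/3)$ when $\alpha > 3$, and its inverse relation makes $v(\alpha)$ continuous and strictly increasing, with $v(3^+) = 1/2$ and $v(\alpha) \to 2/3$ as $\alpha \to \infty$.

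For stability I would parameterise $\Delta_2$ by $v$ and work with $f(v) := F_1(v, 1-v)$; a short check of the $2\times 2$ matrix ${\bf D}$ (whose column sums are $-1$) shows its eigenvalues to be $-1$ and $f'(v)$, so $\vec v \in \Sa$ iff $f'(v) < 0$. Differentiating $f$ and using the identity $(v^\alpha + (1-v)^\alpha)^2 = v^\alpha(1-v)^\alpha/\bigl((3v-1)(2-3v)\bigr)$ (valid at any asymmetric equilibrium by \eqref{plan_eq}) collapses the derivative to
\[ f'(v) = -1 + \tfrac{\alpha}{3}\cdot\tfrac{(3v-1)(2-3v)}{v(1-v)}, \]
so stability is equivalent to $\alpha < g(v) := 3v(1-v)/\bigl((3v-1)(2-3v)\bigr)$. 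Now $g = N'/D'$ and $\phi = N/D$ with $D, D' > 0$ on $(1/2, 2/3)$, so the monotonicity $\phi' > 0$ just proved is identically the inequality $g > \phi$ on this interval. Hence at the equilibrium $\alpha = \phi(v(\alpha)) < g(v(\alpha))$, giving $(v(\alpha), 1-v(\alpha)) \in \Sa$. Stability of $(1/2, 1/2)$ for $\alpha < 3$, its criticality at $\alpha = 3$, and instability for $\alpha > 3$ come directly from Corollary \ref{cor:all_equal_stable}. Combining these facts gives all the claims. The main obstacle is the monotonicity of $\phi$; the key simplifying move is the $\operatorname{arctanh}$ substitution, which replaces an opaque ratio of logarithms by the transparent inequality $3\operatorname{arctanh}(3y) > \operatorname{arctanh}(y)$.
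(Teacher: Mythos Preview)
Your proof is correct. The overall architecture matches the paper's: both reduce to a one-dimensional equation via $t=\log(v/(1-v))$ (the paper writes $\alpha t=f_{1,2}(t)$, you write $\alpha=\phi(v)=N/D$, which is the same relation after dividing by $t$), and both identify stability of the asymmetric equilibrium with the condition that it lies on the increasing branch of $v(\alpha)$. The execution differs in two places. For existence/uniqueness of the asymmetric root, the paper argues via convexity of $f_{1,2}$ on $(0,\ln 2)$ together with $f_{1,2}'(0)=3$; you instead prove directly that $\phi$ is strictly increasing using the substitution $v=\tfrac12+\tfrac{y}{2}$, which turns $\phi$ into $\operatorname{arctanh}(3y)/\operatorname{arctanh}(y)$ and reduces the question to the transparent inequality $3\operatorname{arctanh}(3y)>\operatorname{arctanh}(y)$. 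For stability, the paper computes the eigenvalues for the general $n$-edge star (Lemma~\ref{lem:cond equi}) and then differentiates the implicit equation (Lemma~\ref{star_graph_lemma_4}) to show the stability condition $\xi(uv)^{\alpha-1}<1$ is equivalent to $\partial v/\partial\alpha>0$; you compute the $2\times 2$ eigenvalues directly and make the slick observation that the stability threshold $g(v)$ equals $N'/D'$, so that $\phi'>0$ is literally the inequality $g>\phi=\alpha$. Your route is more elementary and entirely self-contained for $n=2$; the paper's lemmas are set up to handle all star graphs at once, which is what buys their generality.
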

The main result of this section is the following, which will be proved via a sequence of lemmas:

\begin{THM}[Equilibria and stability for star graphs]
\label{thm:general_star_graph}
There exist $\tilde \alpha(k,n)\in (1,n+1)$ (for $k\in [n]$) such that the only equilibria for the star graph with $n\ge 2$ edges are given by
\begin{itemize}
\item[(i)] $(1/n)_n$ for $\alpha>1$; and (with $v>u$) 
\item[(ii)] $((v)_k, (u)_{n-k})$ for $1\le k <n/2$ and $\alpha>\tilde \alpha(k,n)$, with 
$v(\alpha)$ increasing in $\alpha$;  
\item[(iii)] $((v)_k, (u)_{n-k})$ for $1\le k <n/2$ and $\alpha \in (\tilde\alpha(k,n),n+1)$, with 
$v(\alpha)$ decreasing in $\alpha$;  
\item[(iv)] $((v)_k, (u)_{n-k})$ for $n/2 \le k \le n-1$ and $\alpha>n+1$, with $v(\alpha)$ increasing in $\alpha$.
\end{itemize}
Moreover, $(1/n)_n\in \Sa$ if and only if $\alpha<n+1$ (it is critical when $\alpha=n+1$). Equilibrium  (ii) $\in \Sa$  if and only if $k=1$ and $\alpha>\tilde \alpha(1,n)$ (in which case $v(+\infty)=2/(n+1)$). All other equilibria are not linearly stable.
\end{THM}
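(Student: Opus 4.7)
The plan is to show every equilibrium has at most two distinct coordinate values, parameterise the non-symmetric equilibria by the ratio $r=v/u>1$, analyse the resulting implicit function $\alpha_k(r)$ via a Descartes-type root count, and finally decompose the Jacobian spectrum using the two-block symmetry to read off stability. Condition \ref{cond:Vuniform} gives $p_{\{i\}}=p_{[n]}=1/(n+1)$, so \eqref{equil_alpha} reduces to $v_i^\alpha = S((n+1)v_i-1)$ with $S=\sum_j v_j^\alpha$. Since $\alpha>1$, the map $x\mapsto x^\alpha - S((n+1)x-1)$ is strictly convex on $(0,\infty)$ and hence has at most two positive zeros, so every $\vec v\in\mathcal E_\alpha$ is of the form $((v)_k,(u)_{n-k})$ with $v\ge u\ge 0$; the symmetric case (i) is covered by Lemma \ref{lem:symmetry_ones} and Corollary \ref{cor:all_equal_stable}. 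For $k\in\{1,\dots,n-1\}$, setting $r=v/u$ and using $kv+(n-k)u=1$ gives $u=1/(kr+n-k)$ and the defining relation
\[
r^\alpha = \frac{(n+1-k)(r-1)+1}{1-k(r-1)},\qquad r\in(1,\,1+1/k),
\]
which implicitly defines $\alpha_k(r)$ with $\alpha_k(1^+)=n+1$, $\alpha_k(r)\to\infty$ at the right endpoint, and $\alpha_k'(1^+)=(n+1)(2k-n)/2$ by Taylor expansion.

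Multiplying out, equilibria correspond to positive roots of
\[
G_k(r,\alpha) := -kr^{\alpha+1}+(k+1)r^\alpha-(n+1-k)r+(n-k),
\]
whose coefficient sign pattern $(+,-,+,-)$ gives, via the generalised Descartes rule for real-power sums, at most three positive roots for every $\alpha$. Since $r=1$ is always a root and $G_k(1+1/k,\cdot)\equiv -(n+1)/k\ne 0$, $\alpha_k$ cannot have two interior critical points on $(1,1+1/k)$: such a pair would force at least three roots of $G_k(\cdot,\alpha)$ inside $(1,1+1/k)$ at an intermediate critical value of $\alpha$, and hence four positive roots in total, exceeding the Descartes bound. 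Combined with $\alpha_k'(1^+)=(n+1)(2k-n)/2$: for $k<n/2$, $\alpha_k$ has a unique interior minimum $\tilde\alpha(k,n)\in(1,n+1)$ at some $r^\dagger\in(1,1+1/k)$, yielding the decreasing arm (iii) on $(1,r^\dagger)$ and the increasing arm (ii) on $(r^\dagger,1+1/k)$; for $k=n/2$, $\alpha_k'(1^+)=0$ but the next-order Taylor coefficient is positive, so $\alpha_k$ is strictly increasing from $n+1$ to $\infty$ (case (iv)); for $k>n/2$, the same monotonicity argument gives case (iv).

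For stability, the $S_k\times S_{n-k}$ symmetry decomposes the Jacobian $\mathbf{D}(\vec v)$ into a $2\times 2$ block $\begin{pmatrix}A&B\\C&D\end{pmatrix}$ on the subspace spanned by the two block-indicator vectors, together with the eigenvalue $\lambda_b=(\alpha-1)-\alpha/((n+1)v)$ of multiplicity $k-1$ and $\lambda_c=(\alpha-1)-\alpha/((n+1)u)$ of multiplicity $n-k-1$. A short calculation verifies the identity $BC=(A+1)(D+1)$, so the two eigenvalues of the $2\times 2$ block collapse to $-1$ and
\[
\lambda_a := A+D+1 = -1+\frac{\alpha\bigl((n+1)u-1\bigr)\bigl((n+1)v-1\bigr)}{(n+1)uv}.
\]
Writing $g_\alpha(x)=x^\alpha/((n+1)x-1)$, whose unique interior minimiser is $x^\ast=\alpha/((\alpha-1)(n+1))$, the equilibrium equations read $g_\alpha(v)=g_\alpha(u)$, so on any non-symmetric branch $u<x^\ast<v$, giving $\lambda_b>0$ and $\lambda_c<0$. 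Hence every equilibrium with $k\ge 2$ is linearly unstable, and on the $k=1$ branch stability reduces to $\lambda_a<0$.

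To fix the sign of $\lambda_a$ on the $k=1$ branch I identify $\lambda_a$ with the derivative of the restriction of $F$ to the one-dimensional shape-preserving manifold $\{v+(n-1)u=1\}$, and apply the implicit function theorem to $F_v(v,\alpha)=0$: $v'(\alpha)=-\partial_\alpha F_v/\lambda_a$. A direct computation using the identity $1-k((n+1)v-1)=(n-k)((n+1)u-1)$ yields
\[
\partial_\alpha F_v \;=\; \frac{((n+1)v-1)(n-k)((n+1)u-1)\log r}{n+1} \;>\; 0
\]
on the branch. Hence $\lambda_a$ and $v'(\alpha)$ always have opposite signs: branch (ii) (with $v$ increasing in $\alpha$) has $\lambda_a<0$ and is stable, branch (iii) (with $v$ decreasing in $\alpha$) has $\lambda_a>0$ and is unstable, and $\lambda_a=0$ exactly at the saddle-node. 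Finally, $v(+\infty)=2/(n+1)$ follows from $v=r/(r+n-1)$ evaluated at the right endpoint $r\to 2^-$ of branch (ii). The most delicate step is the Descartes-based uniqueness of the critical point of $\alpha_k$ in paragraph two: carefully tracking the multiplicity of the root at $r=1$ and using the absence of a root at the other endpoint is essential for squeezing the argument against the bound of three positive roots.
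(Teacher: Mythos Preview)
Your stability analysis is correct and in places more streamlined than the paper's: the $S_k\times S_{n-k}$ decomposition, the identity $BC=(A+1)(D+1)$ collapsing the $2\times2$ block to eigenvalues $-1$ and $\lambda_a$, and the rewriting of $\lambda_b,\lambda_c,\lambda_a$ via the equilibrium relation $v^\alpha/\eta=(n+1)v-1$ reproduce Lemma~\ref{lem:cond equi} exactly, while the implicit-function link between $\lambda_a$ and the sign of $v'(\alpha)$ is precisely Lemma~\ref{star_graph_lemma_4}.

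The genuine gap is in the existence/uniqueness step via Descartes. Your key claim is that two interior critical points of $\alpha_k$ would force at least three roots of $G_k(\cdot,\alpha)$ in $(1,1+1/k)$ ``at an intermediate critical value of~$\alpha$''. But at a critical value $\alpha_0=\alpha_k(r_0)$ the root $r_0$ generically has multiplicity exactly two, and together with the simple root at $r=1$ this gives exactly three positive roots --- saturating but not exceeding the Descartes bound. To obtain a contradiction you must go further: either show that one of the two critical points is a horizontal inflection (making $r_0$ a root of multiplicity $\ge 3$, hence $\ge 4$ in total), or use the boundary behaviour $\alpha_k(1^+)=n+1$, $\alpha_k\to\infty$ at the right, and $\alpha_k'(1^+)<0$ for $k<n/2$ to argue that two genuine extrema necessarily force a third, after which a suitable level $\alpha$ meets at least three monotone pieces. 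This case analysis is not difficult but is not what you wrote, and the same incompleteness reappears when you infer global strict monotonicity of $\alpha_k$ for $k\ge n/2$ from local Taylor data at $r=1^+$ alone. The paper sidesteps the issue entirely by computing $f''_{k,n}(t)$ for $f_{k,n}(t)=\log\!\bigl(\tfrac{n+1-k}{k}\,\tfrac{e^t-a}{b-e^t}\bigr)$ directly and reading off its unique inflection at $\tilde t=\tfrac12\log(ab)$; the resulting concave-then-convex (or purely convex, when $k\ge n/2$) shape gives the exact solution count for $\alpha t=f_{k,n}(t)$ without any indirect root-counting.
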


Recall that for the star graph on $n$ edges, any equilibrium $\vec{v}\in \Ea$ must satisfy
\begin{equation}\label{equi}
v_i = \frac{1}{n+1} + \frac{1}{n+1}\cdot\frac{v_i^{\alpha}}{v_1^{\alpha}+\dots+v_{n}^{\alpha}}, \;\;\;\; 1\le i \le n.
\end{equation}
Then, clearly $\vec{v}\in \Ea$  must satisfy $1/(n+1)< v_i <2/(n+1)$ for each edge $i\in[n]$, therefore all equilibria are internal, and $v_i/v_j\in [1/2,2]$.

\begin{LEM}
\label{lem:star_alpha}
Assume that $\vec{v}\in \Ea$ for the WARM star graph with $n$ edges. Then 
$\vec{v}=(1/n)_n$ or there exist $v>u$ and $k  \in [n-1]$ such that 
$\vec{v}=((v)_k,(u)_{n-k})$.
\end{LEM}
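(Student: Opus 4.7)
The plan is to exploit the fact that equation \eqref{equi} says each coordinate $v_i$ of an equilibrium must be a root of the \emph{same} one-variable function, and then use strict convexity to conclude that at most two distinct values can occur.

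More precisely, set $S=\sum_{j=1}^n v_j^{\alpha}$. Since \eqref{equi} holds for all $i$ with the same $S$, one can rewrite it as $v_i^{\alpha}=S\bigl((n+1)v_i-1\bigr)$, so every coordinate $v_i$ is a root of the function
\[
g(x)=x^{\alpha}-S\bigl((n+1)x-1\bigr),\qquad x>0.
\]
Because $\alpha>1$, we have $g''(x)=\alpha(\alpha-1)x^{\alpha-2}>0$ on $(0,\infty)$, so $g$ is strictly convex. A strictly convex function has at most two zeros on $(0,\infty)$. Hence the set $\{v_1,\dots,v_n\}$ contains at most two distinct values.

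It remains to rule out the possibility that some $v_i=0$: from \eqref{equi}, if $v_i=0$ then the right-hand side equals $1/(n+1)>0$, a contradiction, so every $v_i>0$. Combining this with the preceding paragraph, either all the $v_i$ agree, in which case the constraint $\sum v_i=1$ forces $\vec v=(1/n)_n$; or the $v_i$ take exactly two distinct positive values, so up to relabelling $\vec v=\bigl((v)_k,(u)_{n-k}\bigr)$ with $v>u>0$ and $k\in[n-1]$. This gives the dichotomy claimed in the lemma.

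The argument is essentially immediate once the right change of variables is spotted, so I do not expect any serious obstacle. The only mild subtlety is keeping in mind that $S$ itself depends on $\vec v$, but this plays no role in counting the roots of $g$: for a \emph{fixed} equilibrium $\vec v$, $S$ is a single positive constant, and the root-counting argument applies to that fixed $g$. No bound on $\alpha$ beyond $\alpha>1$ is needed, and no information about $k$ (other than $1\le k\le n-1$) is obtained at this stage; refining which pairs $(k,v,u,\alpha)$ actually arise is the content of Theorem \ref{thm:general_star_graph}.
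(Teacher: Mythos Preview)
Your proof is correct and follows essentially the same approach as the paper: both fix the quantity $S=\sum_j v_j^{\alpha}$ (the paper calls it $\delta$) and observe that each $v_i$ must solve the same one-variable equation, then argue this equation has at most two positive roots. The only cosmetic difference is the algebraic packaging: the paper rewrites \eqref{equi} as $f(v_i)=n+1$ with $f(x)=x^{-1}(1+\delta^{-1}x^{\alpha})$ and checks that $f$ has a unique critical point, whereas you rewrite it as $g(v_i)=0$ with $g(x)=x^{\alpha}-S((n+1)x-1)$ and invoke strict convexity of $g$; your formulation is marginally cleaner but the underlying idea is identical.
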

\proof
Assume that $\vec{v}$ is an equilibrium. 
Fix $\delta\in (0,1)$ and consider the set of $\vec{v}$ such that $\sum_{i=1}^n v_i^{\alpha}=\delta$.
Define a function $f\colon(0,1)\mapsto \R$ by 
\begin{equation}
f(x) = x^{-1}(1+\delta^{-1}x^{\alpha}).\label{fdef}
\end{equation}
Then \eqref{equi} is equivalent to $f(v_i) = n+1$.  Since \[f'(x)=-x^{-2}+(\alpha-1)\delta^{-1}x^{\alpha-2}=x^{-2}(  (\alpha-1)\delta^{-1}x^{\alpha}-1),\] 
the function $f$ has an extremum where $x^{\alpha} = \delta/(\alpha-1)$.  Hence, when
\[\delta\in \frac{\alpha-1}{(n+1)^{\alpha}}(1,2^{\alpha}),\]
there is exactly one local extremum in $(1,2)/(n+1)$, and otherwise there are no local extrema in $(1,2)/(n+1)$.
 It follows that for any $\delta$, there are at most two solutions to $f(x)=n+1$, whence any equilibrium $\vec{v}$ has at most 2 distinct components.
\hfill\Qed

\begin{LEM}\label{star_graph_lemma_2}
There exist unique equilibria satisfying (ii)-(iv) of Theorem \ref{thm:general_star_graph}.
\end{LEM}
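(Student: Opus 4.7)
The plan is to reduce the two-valued equilibrium condition provided by Lemma \ref{lem:star_alpha} to a single equation in $t:=v/u>1$, and then exploit an algebraic simplification to read off the monotonicity structure of the resulting curve $\alpha=\alpha_k(t)$. Setting $a(t):=(n-k+1)t-(n-k)$ and $b(t):=(k+1)-kt$, the constraint $kv+(n-k)u=1$ together with $v/u=t$ forces $v=t/(kt+n-k)$ and $u=1/(kt+n-k)$; plugging these into \eqref{equi} and eliminating $S=kv^\alpha+(n-k)u^\alpha$ collapses the equilibrium condition to $t^\alpha=a(t)/b(t)$, equivalently
\[
H_k(t;\alpha):=kt^{\alpha+1}-(k+1)t^\alpha+(n-k+1)t-(n-k)=0.
\]
The bounds $1/(n+1)<u<v<2/(n+1)$ confine $t$ to $(1,(k+1)/k)$, on which $a,b>0$, so we may solve for
\[
\alpha=\alpha_k(t):=\frac{\log a(t)-\log b(t)}{\log t}.
\]
L'H\^opital gives $\alpha_k(t)\to n+1$ as $t\to 1^+$, while $b(t)\downarrow 0$ forces $\alpha_k(t)\to+\infty$ as $t\uparrow(k+1)/k$.

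The heart of the argument is the following algebraic miracle. With $\psi(t):=\log a(t)-\log b(t)=\alpha_k(t)\log t$, a direct cancellation of cross-terms in $(n-k+1)b(t)+ka(t)$ yields the closed form
\[
\psi'(t)=\frac{n+1}{a(t)b(t)},\qquad \Phi(t):=t\psi'(t)=\frac{t(n+1)}{a(t)b(t)},
\]
from which a short computation gives
\[
\Phi'(t)=\frac{(n+1)\bigl[k(n-k+1)t^2-(n-k)(k+1)\bigr]}{a(t)^2b(t)^2}.
\]
Hence $\Phi'$ vanishes precisely at $s:=\sqrt{(n-k)(k+1)/[k(n-k+1)]}$, with $s>1$ if and only if $k<n/2$. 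Defining $\chi(t):=t\log t\cdot\psi'(t)-\psi(t)$, the identity $(t\psi')'=\psi'+t\psi''$ delivers $\chi'(t)=\log t\cdot\Phi'(t)$; combined with $\alpha_k'(t)=\chi(t)/[t(\log t)^2]$ and $\chi(1)=0$, the sign of $\alpha_k'$ on $(1,(k+1)/k)$ is then dictated entirely by the sign pattern of $\Phi'$.

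The two regimes now split cleanly. For $k\ge n/2$ (case (iv)): $s\le 1$, so $\Phi'\ge 0$ and $\chi'\ge 0$ on $(1,(k+1)/k)$; since $\chi(1)=0$ this forces $\chi>0$, hence $\alpha_k$ is a strictly increasing bijection onto $(n+1,\infty)$, giving the unique equilibrium asserted in (iv) with $v=t/(kt+n-k)$ increasing in $\alpha$. For $k<n/2$ (cases (ii), (iii)): $s\in(1,(k+1)/k)$, so $\chi$ strictly decreases from $\chi(1)=0$ on $(1,s)$ and then strictly increases, tending to $+\infty$ at the right endpoint because $\psi'\sim k/b$ dominates $\psi\sim-\log b$ there; thus $\chi$ has a unique zero $t^*\in(s,(k+1)/k)$, and $\alpha_k$ is strictly decreasing on $(1,t^*)$ and strictly increasing on $(t^*,(k+1)/k)$ with interior minimum $\tilde\alpha(k,n):=\alpha_k(t^*)<n+1$. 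Bijectivity on each branch delivers the unique equilibria in cases (ii) and (iii) for the stated $\alpha$-ranges, and monotonicity of $v=t/(kt+n-k)$ in $t$ transfers the branch monotonicity to $v(\alpha)$. The main obstacle is the identity $\psi'(t)=(n+1)/[a(t)b(t)]$: without this cancellation the phase portrait of $\alpha_k$ remains a transcendental mystery, while with it everything reduces to the sign of a single quadratic in $t$.
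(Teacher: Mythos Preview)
Your argument is correct and is essentially the paper's proof under the substitution $t=v/u=e^{\tau}$, where $\tau=\ln(v/u)$ is the paper's variable: your $\psi(t)=\log a(t)-\log b(t)$ is exactly the paper's $f_{k,n}(\tau)$, your $\Phi(t)=t\psi'(t)$ equals $f_{k,n}'(\tau)$, and your zero $s$ of $\Phi'$ is the paper's inflection point $\tilde\tau=\tfrac12\ln(ab)$. The only difference is presentational: the paper fixes $\alpha$ and counts intersections of the line $y=\alpha\tau$ with the concave--convex curve $y=f_{k,n}(\tau)$ (tangent-line picture), whereas you invert to $\alpha=\alpha_k(t)$ and read off the branch structure from the sign of $\alpha_k'$ via $\chi$; these are two ways of parametrizing the same curve in the $(\text{ratio},\alpha)$-plane.

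One small omission: you show $\tilde\alpha(k,n)<n+1$ but not $\tilde\alpha(k,n)>1$, which is part of the theorem's statement. The paper handles this via the lower bound $f_{k,n}'(\tau)\ge(\sqrt b+\sqrt a)/(\sqrt b-\sqrt a)>1$; in your variables it follows immediately from the factorization
\[
a(t)-t\,b(t)=kt^2+(n-2k)t-(n-k)=(t-1)(kt+n-k)>0\quad\text{on }(1,(k+1)/k),
\]
which gives $t^{\alpha_k(t)}=a(t)/b(t)>t$ and hence $\alpha_k(t)>1$ throughout.
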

\proof
Assume that $1\le k \le n-1$. Any $\vec{v}\in \Ea$ if and only if it satisfies \eqref{equi}.  For $\vec{v}$ of the form $\vec{v}=((v)_k,(u)_{n-k})$, \eqref{equi} is equivalent to a single equation
\begin{equation}\label{eqn_equi_main}
u=\frac{1}{n+1}+\frac{1}{n+1} \cdot \frac{u^{\alpha}}{kv^{\alpha}+(n-k)u^{\alpha}},
\end{equation}
plus the balance equation $kv+(n-k)u=1$. We introduce a new variable 
\begin{equation*}
t=\ln(v/u)=\ln\left(\frac{1}{k} \left(\frac{1}{u}-n+k\right)\right).
\end{equation*}
From the above formula it follows that
\begin{equation*}
u=\frac{1}{n+k(\e^t-1)},
\end{equation*}
and \eqref{eqn_equi_main} gives us
\begin{equation}\label{eqn_t_n1}
\frac{1}{n+k(\e^t-1)}=\frac{1}{n+1}+\frac{1}{n+1} \cdot \frac{1}{n-k+k\e^{\alpha t}}.
\end{equation}
Solving the above equation for $\e^{\alpha t}$ we obtain
\begin{equation}\label{star_lemma_2_eqn1}
\e^{\alpha t}=\frac{n+1-k}{k} \cdot \frac{\e^t-\frac{n-k}{n-k+1}}{\frac{1+k}{k}-\e^t}.
\end{equation}
Let us define $a:=(n-k)/(n-k+1)$, $b:=(1+k)/k$  and
\begin{equation}\label{def_fkn}
f_{k,n}(t):=\ln \left( \frac{n+1-k}{k} \cdot \frac{\e^t-a}{b-\e^t}\right), \;\;\; \ln(a)<  t<\ln(b), \; 1\le k \le n-1. 
\end{equation}
Then, \eqref{star_lemma_2_eqn1} is equivalent to 
\begin{equation}\label{eqn_main_t_fkn}
\alpha t= f_{k,n}(t). 
\end{equation}
\begin{figure}
\centering
\captionsetup{width=0.8\textwidth}
\FIGS{\includegraphics[height=6cm]{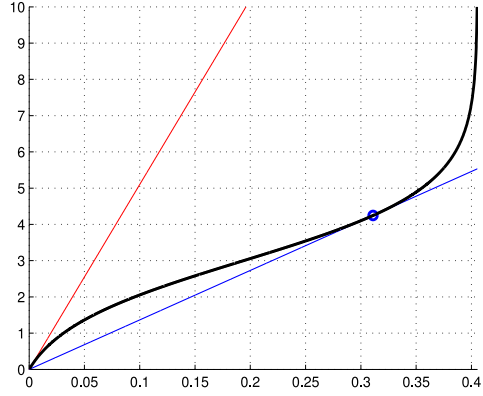}}
\caption{{\small Solving equation $\alpha t= f_{k,n}(t)$ when $k<n/2$. The black curve is $y=f_{k,n}(t)$. The blue line is $y=\tilde \alpha t$
where $\tilde \alpha=\alpha(k,n)$ 
and the red line is $y=(n+1)t$.}}
\label{fig2}
\end{figure}

Let us investigate the function $t\mapsto f_{k,n}(t)$ in more detail. We compute 
\begin{align*}
f_{k,n}'(t)=&\frac{a}{\e^{t}-a}+\frac{b}{b-\e^t}, \\
f_{k,n}''(t)=&\left[-\frac{a}{(\e^t-a)^2}+\frac{b}{(b-\e^t)^2}\right]\e^t=\frac{(b-a)\e^t (\e^{2t}-ab)}{(\e^t-a)^2(b-\e^t)^2}.
\end{align*}
From these equations, we obtain the following facts:
\begin{itemize}
\item[(i)] $f_{k,n}(t)$ is an increasing function and $f_{k,n}(t)\to +\infty$ as $t \uparrow \ln(b)$;
\item[(ii)] $f_{k,n}(0)=0$ and $f_{k,n}'(0)=n+1$;
\item[(iii)] $f_{k,n}(t)$ is concave for $t\in (\ln(a),\tilde t)$ and convex for $t\in (\tilde t,\ln(b))$, where
$\tilde t:=\ln(ab)/2$;
\item[(iv)] $f_{k,n}''(0)=(2k-n)(n+1)$;
\item[(v)]  The inflection point $\tilde t$ satisfies $\tilde t\le 0$  if $k\ge n/2$ and $\tilde t>0$ if $k<n/2$. 
\item[(vi)] For all $t\in (\ln(a),\ln(b))$ we have $f_{k,n}'(t)\ge f_{k,n}'(\tilde t)=\frac{\sqrt{b}+\sqrt{a}}{\sqrt{b}-\sqrt{a}}>1$.
\end{itemize}

Let us first consider the case when $k<n/2$. Then the function $t\mapsto f_{k,n}(t)$ is concave on $(0,\tilde t)$ and 
convex on $(\tilde t, \ln(b))$. The graph of $f_{k,n}(t)$ is shown in Figure \ref{fig2}. Note that there exists a unique $\tilde\alpha(n,k)$ such that the straight line $y=\tilde \alpha t$ is tangent to $y=f_{k,n}(t)$ at some point $\tilde t$ (see the 
blue line in Figure \ref{fig2}). Since $f_{k,n}(0)=n+1$ and $f_{k,n}(t)$ is an increasing function, we see that $\tilde \alpha<n+1$, and item
(vi) above shows that $\tilde \alpha>1$. 
It is clear that: for $\alpha>\tilde\alpha$ there is a solution $t_2(\alpha)$ to \eqref{eqn_main_t_fkn} that is an increasing function of $\alpha$; for $\alpha\in (\tilde\alpha,n+1)$ there is another solution $t_1(\alpha)<\tilde t<t_2(\alpha)$ that is decreasing in $\alpha$; there are no other solutions to \eqref{eqn_main_t_fkn}.
This demonstrates both the existence and uniqueness of equilibria satisfying (ii) and (iii) of Theorem \ref{thm:general_star_graph} respectively.  

When $k\ge n/2$ the situation is simpler, as the function $t\mapsto f_{k,n}(t)$ is convex on $(0,\ln(b))$. Since $f_{k,n}'(0)=n+1$ we see that 
for every $\alpha>n+1$ there exists a unique positive solution to  \eqref{eqn_main_t_fkn}, and that this solution is increasing in $\alpha$. See Figure \ref{fig3}.  
Finally, note that when $k=1$, $t\uparrow \log(b)$ as $\alpha\uparrow \infty$ implies that $v(n-1)/(1-v) \uparrow b=2$ which in turn implies that $v\uparrow 2/(n+1)$.
\hfill\Qed
\begin{figure}
\centering
\captionsetup{width=0.8\textwidth}
\FIGS{\includegraphics[height =6cm]{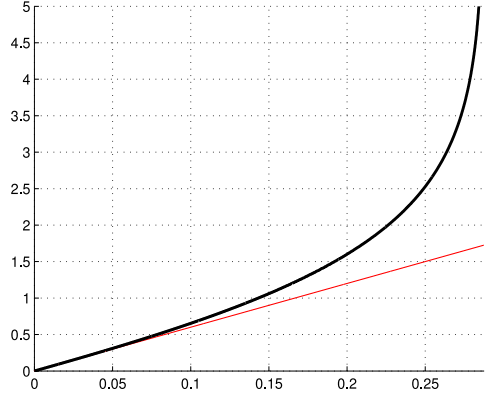}}
\caption{{\small Solving equation $\alpha t= f_{k,n}(t)$ when $k\ge n/2$. The black curve is $y=f_{k,n}(t)$. The red line is $y=(n+1)t$.}}
\label{fig3}
\end{figure}

For $\vec{v}\in \R^n$ and $a>0$, write $\vec{v}^{a}=(v_1^{a},\dots, v_n^a)$, so that e.g.~$((v)_k,(u)_{n-k})^{a}=((v^{a})_k,(u^{a})_{n-k})$.

\begin{LEM}\label{lem:cond equi}
Assume  $\vec{v}=((v)_k,(u)_{n-k})\in \Ea$ for some $1\le k \le n-1$ and $v>u$. 
Let $\eta=kv^{\alpha}+(n-k)u^{\alpha}$ and $\xi=\alpha (n+1)^{-1}\eta^{-2}$. Then $\vec{v}\in \Sa$ (critical if equality holds below) if and only if 
\begin{align*}
k=1 &\quad\qquad \text{and}\quad\qquad \xi(uv)^{\alpha-1}<1, \quad \text{or}\\
k\ge 2 &\quad\qquad \text{and}\quad\qquad  v<\frac{\alpha}{(\alpha-1)(n+1)}.
\end{align*}
\end{LEM}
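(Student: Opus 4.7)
The plan is to compute the Jacobian ${\bf D}(\vec{v})$ explicitly via \eqref{Dalpha_ii}--\eqref{Dalpha_ik}, exploit the $S_k\times S_{n-k}$ symmetry of the equilibrium to find all eigenvalues in closed form, and then use the equilibrium equations to collapse what looks like several stability conditions into the single inequality stated in the lemma. For the star graph, each edge $i$ lies in exactly two subsets that occur with positive probability, namely $[n]$ (when $V_t$ is the center) and $\{i\}$ (when $V_t$ is the leaf adjacent to $i$), while each pair of distinct edges lies only in $[n]$. Writing $\eta = kv^\alpha + (n-k)u^\alpha$ and $\xi = \alpha/((n+1)\eta^2)$, the formulas \eqref{Dalpha_ii}--\eqref{Dalpha_ik} reduce to $D_{i,i} = -1 + \xi v_i^{\alpha-1}(\eta - v_i^\alpha)$ and $D_{i,j} = -\xi v_i^\alpha v_j^{\alpha-1}$ for $i\ne j$.

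Since ${\bf D}$ commutes with all permutations of the first $k$ coordinates and with all permutations of the last $n-k$ coordinates, I would decompose $\R^n$ into isotypic components for $S_k\times S_{n-k}$. On the $(k-1)$-dimensional subspace of vectors supported on $\{1,\dots,k\}$ with zero sum, ${\bf D}$ acts as multiplication by $\lambda_1 = -1 + \xi\eta v^{\alpha-1}$; similarly one obtains an $(n-k-1)$-dimensional eigenspace with eigenvalue $\lambda_2 = -1 + \xi\eta u^{\alpha-1}$. On the remaining $2$-dimensional subspace spanned by the indicator vectors $f_1, f_2$ of the two blocks, the restriction of ${\bf D}$ is a $2\times 2$ matrix; one of its eigenvalues is $-1$, with eigenvector $vf_1 + uf_2 = \vec{v}$ (this follows from Euler's identity applied to the $0$-homogeneous part of $F$, which gives ${\bf D}\vec{v} = -\vec{v}$ at any equilibrium), and the second eigenvalue, read off from the trace and simplified using $kv + (n-k)u = 1$, equals $\lambda_4 = -1 + \xi(uv)^{\alpha-1}$.

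The crucial remaining step is to show that at any equilibrium with $v > u$ the three relevant eigenvalues satisfy $\lambda_2 < \lambda_4 < \lambda_1$. Rewriting \eqref{equi} as $\eta = v^\alpha/((n+1)v - 1)$ (and the analogue for $u$), one obtains $\xi\eta v^{\alpha-1} = \alpha(1 - 1/((n+1)v))$, and multiplying together the expressions for $\lambda_1+1$ and $\lambda_2+1$ yields the key algebraic identity $(\lambda_1+1)(\lambda_2+1) = \alpha(\lambda_4+1)/(n+1)$. Combining this with the constraint $kv + (n-k)u = 1$, which together with $v > u$ forces $u < 1/n < v$, a short computation using the identity gives the claimed ordering.

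Given the ordering, stability reduces to controlling the sign of the largest relevant eigenvalue. When $k\ge 2$, $\lambda_1$ has multiplicity $k-1\ge 1$, so stability is equivalent to $\lambda_1<0$, which translates directly to $v < \alpha/((\alpha-1)(n+1))$, with equality giving criticality. When $k=1$, $\lambda_1$ is absent and $\lambda_4$ is the largest relevant eigenvalue, so stability is equivalent to $\lambda_4<0$, i.e.~$\xi(uv)^{\alpha-1}<1$. I expect the main obstacle to be precisely the chain $\lambda_2 < \lambda_4 < \lambda_1$: the symmetry-based eigenvalue computation is mechanical, but this ordering is what collapses the apparent three stability conditions into a single inequality and it relies on a careful use of the equilibrium equations together with $u < 1/n < v$.
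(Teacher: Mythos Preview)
Your proposal is correct and arrives at exactly the same four eigenvalues as the paper, but by a different mechanism. The paper writes ${\bf D}-\lambda{\bf I}$ as a rank-one perturbation of a diagonal matrix and applies the Modified Matrix Determinant Lemma (Lemma~\ref{lem:MMDL}) to factor the characteristic polynomial directly; you instead exploit the $S_k\times S_{n-k}$ symmetry to split $\R^n$ into isotypic pieces, reading off $\lambda_1,\lambda_2$ on the zero-sum blocks and handling the remaining two-dimensional invariant piece via the Euler-identity observation ${\bf D}\vec{v}=-\vec{v}$ together with a trace computation. For the ordering $\lambda_2<\lambda_4<\lambda_1$, the paper uses the bare inequalities $u^{\alpha-1}<\eta<v^{\alpha-1}$, obtained by writing $\eta-u^{\alpha-1}=kv(v^{\alpha-1}-u^{\alpha-1})$ and $v^{\alpha-1}-\eta=(n-k)u(v^{\alpha-1}-u^{\alpha-1})$ from $kv+(n-k)u=1$; this is slightly more direct than your route through the identity $(\lambda_1+1)(\lambda_2+1)=\alpha(\lambda_4+1)/(n+1)$ combined with $u<1/n<v$ (though your identity does need the observation $\lambda_1+1,\lambda_2+1>0$, which holds since $u,v>1/(n+1)$). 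Your symmetry argument is more structural and the Euler step is a pleasant general fact valid for any WARM; the paper's determinant computation is more hands-on but avoids any representation-theoretic language and makes the chain of implications among the three stability inequalities completely explicit.
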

\proof 
The matrix ${\bf D}$ of partial derivatives has entries
\begin{align*}
D_{ii}=&-1+\xi\times\begin{cases}
v^{\alpha-1}\left((k-1)v^{\alpha}+(n-k)u^{\alpha}\right) & \text{ if }i\le k,\\
u^{\alpha-1}\left(kv^{\alpha}+(n-k-1)u^{\alpha}\right) & \text{ if }i>k,
\end{cases}\\
=&-1+\xi\times\begin{cases}
v^{\alpha-1}\eta -v^{2\alpha-1}& \text{ if }i\le k,\\
u^{\alpha-1}\eta-u^{2\alpha-1}& \text{ if }i>k,
\end{cases}\\
D_{ij}=&-\xi\times \begin{cases}
v^{2\alpha-1}  & \text{ if }i,j\le k,\\
v^{\alpha}u^{\alpha-1}  & \text{ if }i\le k<j,\\
v^{\alpha-1}u^{\alpha}   & \text{ if }j\le k<i,\\
u^{2\alpha-1}  & \text{ if }i,j>k.
\end{cases}
\end{align*}
Let
\begin{align}
\vec{x}=&\vec{v}^{\alpha}, \quad \text{and}\quad \vec{w}=-\xi\vec{v}^{\alpha-1}\label{uwdef}.
\end{align}

Let ${\bf Z}$ be a diagonal matrix with $Z_{ii}=D_{ii}+z_i$, where $\vec{z}=-\lambda\vec{1}+\xi \vec{v}^{2\alpha-1}$.  
Then 
\begin{align*}
Z_{ii}=-(1+\lambda)+\xi \eta \begin{cases}
v^{\alpha-1}, & \text{ if }i\le k,\\
u^{\alpha-1}, & \text{ if }i>k,
\end{cases}
\end{align*}
and ${\bf D}-\lambda {\bf I}={\bf Z}+\vec{x}\vec{w}^\mathrm{T}$.
It follows from Lemma \ref{lem:MMDL} that
\begin{align*}
\det({\bf D}-\lambda {\bf I})=&\det({\bf Z})+\vec{w}^{\mathrm{T}}\adj({\bf Z})\vec{x}\\
=&Z_{1,1}^kZ_{n,n}^{n-k}-\xi v^{2\alpha-1}kZ_{1,1}^{k-1}Z_{n,n}^{n-k}-\xi u^{2\alpha-1}(n-k)Z_{1,1}^{k}Z_{n,n}^{n-k-1}\\
=&Z_{1,1}^{k-1}Z_{n,n}^{n-k-1}\left(Z_{1,1}Z_{n,n}-\xi v^{2\alpha-1}kZ_{n,n}-\xi u^{2\alpha-1}(n-k)Z_{1,1}\right).
\end{align*}
After a lot of simplifying, using the definition of $\eta$ and that $kv+(n-k)u=1$ we get that the term in brackets is zero if and only if 
\begin{align*}
(1+\lambda)^2-(1+\lambda)\xi (uv)^{\alpha-1}=0,
\end{align*}
i.e.~if and only if $\lambda=-1$ or $\lambda=-1+\xi(uv)^{\alpha-1}$.  The latter is $<0$ precisely when $\xi(uv)^{\alpha-1}<1$.

If $k>1$ then we also have an eigenvalue when $Z_{1,1}=0$, for which
$\lambda=-1+\xi \eta v^{\alpha-1}$ is negative when $\xi\eta v^{\alpha-1}<1$.  Similarly if $n-k>1$ then we also have an eigenvalue when $Z_{n,n}=0$ for which 
$\lambda=-1+\xi \eta u^{\alpha-1}$ is negative when $\xi\eta u^{\alpha-1}<1$.

Since $u<v$ we have that $\xi\eta u^{\alpha-1}<1$ if $\xi\eta v^{\alpha-1}<1$.  Next,
\begin{align*}
\eta=&u^{\alpha-1}(kv (v/u)^{\alpha-1}+(n-k)u)
>u^{\alpha-1}(kv+(n-k)u)=u^{\alpha-1}.
\end{align*}
This implies that $\xi(uv)^{\alpha-1}<1$ if $\xi\eta v^{\alpha-1}<1$.  Similarly $v^{\alpha-1}>\eta$ so $\xi\eta u^{\alpha-1}<1$ when $\xi (uv)^{\alpha-1}<1$. Therefore, we have proved that the equilibrium with $k=1$ is linearly stable if and only if $\xi (uv)^{\alpha-1}<1$ 
and the equilibrium with $k\ge 2$ is linearly stable if and only if $\xi\eta v^{\alpha-1}<1$. Since $v$ satisfies 
\begin{equation*}
v=\frac{1}{n+1}+\frac{1}{n+1} \times \frac{v^{\alpha}}{kv^{\alpha}+(n-k)u^{\alpha}}
=\frac{1}{n+1}+\frac{\xi \eta v^{\alpha}}{\alpha},
\end{equation*}
the condition $\xi\eta v^{\alpha-1}<1$ is equivalent to $v<\frac{\alpha}{(\alpha-1)(n+1)}$.
\hfill\Qed

\noindent
{\bf Remark:} The proof of Lemma \ref{lem:cond equi} shows that if $k\ge 2$ and $((v)_k,(u)_{n-k})\in \Sa$ then
$\xi (uv)^{\alpha-1}<1$. This observation will be useful for us later, when we investigate the stability of these equilibria. 
\label{page_remark1}
\vspace{0.5cm}

\begin{LEM}\label{star_graph_lemma_4} Assume that $((v)_k,(u)_{n-k})\in \Ea$ with $v>u$ and $\xi$ and $\eta$ are defined as in Lemma \ref{lem:cond equi}. Then the condition 
$\xi(uv)^{\alpha-1}<1$ is equivalent to $\partial v/\partial \alpha>0$. 
\end{LEM}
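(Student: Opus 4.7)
The plan is to express both sides of the claimed equivalence in terms of the single variable $t=\ln(v/u)$ introduced in the proof of Lemma~\ref{star_graph_lemma_2}, then use implicit differentiation of the equilibrium equation $\alpha t=f_{k,n}(t)$. Parameterising the equilibrium by $t>0$ (which corresponds to $v>u$), we have $u=1/(n-k+k\e^t)$ and $v=\e^t u$, so a direct calculation gives $\partial v/\partial t=(n-k)uv>0$ (using $1\le k\le n-1$). Implicit differentiation of $\alpha t=f_{k,n}(t)$ with respect to $\alpha$ yields
\[
\frac{\partial t}{\partial\alpha}=\frac{t}{f'_{k,n}(t)-\alpha},
\]
which has the same sign as $f'_{k,n}(t)-\alpha$ since $t>0$. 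By the chain rule, $\partial v/\partial\alpha$ has the same sign as $\partial t/\partial\alpha$, so
\[
\frac{\partial v}{\partial\alpha}>0 \quad \Longleftrightarrow \quad f'_{k,n}(t)>\alpha.
\]

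Next I would rewrite both the stability criterion $\xi(uv)^{\alpha-1}<1$ and the inequality $f'_{k,n}(t)>\alpha$ as algebraic relations in $u,v,n,k,\alpha$. The equilibrium equations \eqref{equi} specialised to $\vec v=((v)_k,(u)_{n-k})$ read $(n+1)v-1=v^\alpha/\eta$ and $(n+1)u-1=u^\alpha/\eta$. Multiplying these and combining with $\xi=\alpha/((n+1)\eta^2)$ yields
\[
\xi(uv)^{\alpha-1}=\frac{\alpha\bigl[(n+1)v-1\bigr]\bigl[(n+1)u-1\bigr]}{(n+1)uv},
\]
so $\xi(uv)^{\alpha-1}<1$ is equivalent to $\alpha[(n+1)v-1][(n+1)u-1]<(n+1)uv$. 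On the other side, substituting $\e^t=v/u$, $a=(n-k)/(n-k+1)$ and $b=(k+1)/k$ into $f'_{k,n}(t)=a/(\e^t-a)+b/(b-\e^t)$ and using the balance relation $kv+(n-k)u=1$ to simplify each denominator (to $(n+1)v-1$ and $(n+1)u-1$ respectively) gives
\[
f'_{k,n}(t)=\frac{v(n-k+1)}{(n+1)v-1}+\frac{vk}{(n+1)u-1}.
\]

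Multiplying $f'_{k,n}(t)>\alpha$ through by the positive quantity $[(n+1)v-1][(n+1)u-1]$, the left-hand side becomes
\[
v(n-k+1)\bigl[(n+1)u-1\bigr]+vk\bigl[(n+1)v-1\bigr].
\]
Expanding and grouping yields $(n+1)v\bigl[u(n-k+1)+kv-1\bigr]$, and the bracket collapses to $u$ via $kv+(n-k)u=1$. Thus $f'_{k,n}(t)>\alpha$ is exactly $(n+1)uv>\alpha[(n+1)v-1][(n+1)u-1]$, which matches the stability criterion derived above. I expect this algebraic collapse—an identity using the balance relation in a slightly nonobvious way—to be the main (though still elementary) obstacle; everything else is routine once the $t$-parametrisation from Lemma~\ref{star_graph_lemma_2} is in hand.
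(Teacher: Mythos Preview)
Your argument is correct and follows essentially the same approach as the paper: both implicitly differentiate the $t$-parametrised equilibrium relation and identify the sign of $\partial v/\partial\alpha$ with that of $t'$. The paper's execution is a bit more direct—it differentiates \eqref{eqn_t_n1} rather than $\alpha t=f_{k,n}(t)$, so that the quantity $\xi(uv)^{\alpha-1}$ emerges immediately from rewriting $\frac{\e^t}{(n+k(\e^t-1))^2}$ and $\frac{\e^{\alpha t}}{(n-k+k\e^{\alpha t})^2}$ in $(u,v)$-variables, bypassing your explicit computation of $f'_{k,n}$ and the algebraic identity.
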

\proof
We use the same notation as in the proof of Lemma \ref{star_graph_lemma_2}, that is $t=\ln(v/u)$. Taking the derivative $\partial /\partial \alpha$ of both sides of equation \eqref{eqn_t_n1} we obtain, with $t'= \frac{\d t}{\d \alpha}$,
\begin{equation*}
\frac{\e^t t'}{(n+k(\e^t-1))^2}=\frac{1}{n+1} \cdot \frac{\e^{\alpha t} (t+\alpha t')}{(n-k+k\e^{\alpha t})^2}.
\end{equation*}
Since $t>0$ the above equation gives us
\begin{equation*}
\frac{\e^t t'}{(n+k(\e^t-1))^2}>\frac{\alpha}{n+1} \cdot \frac{\e^{\alpha t} t'}{(n-k+k\e^{\alpha t})^2}.
\end{equation*}
Rewriting this inequality in terms of $u$ and $v$ (recall that $u=1/(n+k(\e^t-1))$ and $\e^t=v/u$) we obtain
\begin{equation*}
uv t'>\frac{\alpha}{n+1} \cdot \frac{(uv)^{\alpha}}{(kv^{\alpha}+(n-k)u^{\alpha})^2}t'
\end{equation*}
which is equivalent to
\begin{equation*}
t'(\xi (uv)^{\alpha-1}-1)<0.
\end{equation*}
Therefore, $\xi(uv)^{\alpha-1}<1$ if and only if $t'>0$, which is equivalent to $\partial v/\partial \alpha>0$ since $t=\log(v/u)$ and $u=(1-kv)/(n-k)$.
\hfill\Qed

\noindent
{\em \bf Proof of Theorems \ref{thm_line_graph_two_edges} and
\ref{thm:general_star_graph}.}
The fact that $(1/n)_n\in \Sa$ if and only if $\alpha<n+1$ is part (iii) of Corollary \ref{cor:all_equal_stable}.  By Lemma \ref{lem:star_alpha} all other equilibria are of the form $((v)_k,(u)_{n-k})$ for some $v>u$, $1\le k\le n-1$.

If $n=2$ then $k=1\ge n/2$, and we have by Lemma \ref{star_graph_lemma_2} that there exists an (unique) equilibrium of the form $((v)_k,(u)_{n-k})$ with $v>u$ if and only if $\alpha>n+1$, and that $v(\alpha)$ is increasing to $2/3$.  This proves Theorem \ref{thm_line_graph_two_edges}.

For $n>2$, if $k=1$ and $\alpha \in (\tilde\alpha(1,n),n+1)$ we have by Lemma \ref{star_graph_lemma_2} that there exist two equilibria of the form $(v,(u)_{n-1})$, one of which has $\partial v/\partial \alpha<0$ and the other $\partial v/\partial \alpha>0$. Lemmas \ref{lem:cond equi} and
\ref{star_graph_lemma_4} tell us that linear stability is equivalent to $\partial v/\partial \alpha>0$, so this shows that only one of these two equilibria is linearly stable. When $\alpha>n+1$ we have a unique equilibrium 
of the form $(v,(u)_{n-1})$, and since $\partial v/\partial \alpha>0$ it is linearly stable. 

Next, let us consider the equilibria corresponding to $k>1$. First, assume that $k\ge n/2$ or $k<n/2$ and $\alpha>n+1$. Then we have only one such equilibrium, which exists for $\alpha>n+1$. However, if $\alpha>n+1$ then $\alpha/((\alpha-1)(n+1))<1/n$, and Lemma \ref{lem:cond equi} tells us that such an equilibrium can not be linearly stable (since $v>u$ implies $v>1/n$). 

Finally, let us consider the case $k<n/2$ and $\alpha \in (\alpha(k,n),n+1)$. In this case we have two equilibria, corresponding to two solutions of equation $\alpha t=f_{k,n}(t)$ (see Figure \ref{fig2}). Let us denote these equilibria 
\begin{equation*}
\vec{v}^{\sss (1)}=\vec{v}^{\sss (1)}(\alpha)=((v^{\sss (1)})_k,(u^{\sss (1)})_{n-k})
\end{equation*}
and similarly for $\vec{v}^{\sss (2)}=\vec{v}^{\sss (2)}(\alpha)$. We assume that $v^{\sss (1)}<v^{\sss (2)}$. 
From the proof of Lemma \ref{star_graph_lemma_2}
we know that $v^{\sss (1)}$ is a decreasing function of $\alpha$ while $v^{\sss (2)}$ is an increasing function of $\alpha$.

From the remark on page \pageref{page_remark1} 
and Lemma 
\ref{star_graph_lemma_4}, $\vec{v}^{\sss (1)}$ can not be linearly stable since $v^{\sss (1)}(\alpha)$ is decreasing in $\alpha$.

Let us consider the equilibrium $\vec{v}^{\sss (2)}$. If this equilibrium is stable, then 
from Lemma \ref{lem:cond equi}, we find that $v^{\sss (2)}<\alpha/((\alpha-1)(n+1))$. Since $v^{\sss (1)}<v^{\sss (2)}$, we see that 
$v^{\sss (1)}$ also satisfies the condition $v^{\sss (1)}<\alpha/((\alpha-1)(n+1))$, therefore $\vec{v}^{\sss (1)}$ must be a stable equilibrium
due to Lemma \ref{lem:cond equi}. Thus we have arrived at a contradiction (since we know that $\vec{v}^{\sss (1)}$ can not be linearly stable), 
and we conclude that $\vec{v}^{\sss (2)}$ is not linearly stable. 
\qed

\subsection{Triangle graph}
\label{sec:triangle}
Consider a WARM triangle graph, under Condition $(\alpha)$. Equations \eqref{equil_alpha} give us the following
\beq\label{system1}
\nonumber
v_1&=&\frac{1}{3} \frac{v_1^{\alpha}}{v_1^{\alpha}+v_2^{\alpha}}+\frac{1}{3} \frac{v_1^{\alpha}}{v_1^{\alpha}+v_3^{\alpha}}, \\
v_2&=&\frac{1}{3} \frac{v_2^{\alpha}}{v_2^{\alpha}+v_3^{\alpha}}+\frac{1}{3} \frac{v_2^{\alpha}}{v_1^{\alpha}+v_2^{\alpha}}, \\\nonumber
v_3&=&\frac{1}{3} \frac{v_3^{\alpha}}{v_1^{\alpha}+v_3^{\alpha}}+\frac{1}{3} \frac{v_3^{\alpha}}{v_2^{\alpha}+v_3^{\alpha}}.
\eeq

From now on we will list $(v_1,v_2,v_3)$ in the decreasing order: $v_1\ge v_2\ge v_3$. 
\begin{THM}[Equilibira and stability for WARM triangle graph]
\label{thm_triangle_main}
 The only equilibria for the WARM triangle graph are given by
\begin{itemize}
\item[(i)] $(1/3,1/3,1/3)$, for all $\alpha>1$;
\item[(ii)] $(1/2,1/2,0)$, for all $\alpha>1$;
\item[(iii)]  $(v,u,0)$ for $\alpha>3$, where $v>u$ and $v(\alpha)$ increases from $v(3+)=1/2$ to $v(+\infty)=2/3$ (here
$(v,u)$ is an equilibrium for the line/star graph with two edges, see Theorem \ref{thm_line_graph_two_edges});
\item[(iv)] $(v,v,u)$, for $\alpha \in (1,4/3]$, where $v>u$ and $v(\alpha)$ decreases from $v(1+)=1/2$ to $v(4/3-)=1/3$.
\item[(v)] $(v,u,u)$, for $\alpha \ge 4/3$, where $v>u$ and $v(\alpha)$ increases from $v(4/3+)=1/3$ to $v(+\infty)=2/3$; 
\end{itemize}
Their stability properties are listed below:
\beqq
&&\textnormal{\it Equilibrium (i) is linearly stable if and only if $\alpha<4/3$,}\\
&&\textnormal{\it Equilibrium (ii) is linearly stable if and only if $\alpha<3$,}\\
&&\textnormal{\it Equilibrium (iii) is linearly stable for all $\alpha>3$,}\\
&&\textnormal{\it Equilibria (iv) and (v) are not linearly stable,}\\
&&\textnormal{\it The equilibria are critical if and only if equality holds in the above.}
\eeqq
\end{THM}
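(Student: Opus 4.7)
The plan breaks into three pieces: importing (i)--(iii) from earlier results in the paper; exhibiting (iv) and (v) by a symmetric ansatz while ruling out any other equilibria by a rigidity argument; and computing the linearised dynamics at (iv) and (v). For (i), the triangle satisfies Condition \ref{cond:symmetry} (with $a_2 = 2$ and $p_2 = 1/3$), so Lemma \ref{lem:symmetry_ones} delivers $(1/3,1/3,1/3) \in \Ea$, and its stability threshold $\alpha = 4/3$ is Proposition \ref{prp:cycle_complete_all_equal_stable} applied to Example \ref{exa:circle} (resp.\ Example \ref{exa:complete_graph}) with $n = 3$. For (ii) and (iii), I would apply Theorem \ref{thm:subgraphs} with $k = 1$ and $G_1$ the $2$-edge path spanning all three vertices: since $|V|/|V_1| = 1$ the equilibrium values do not rescale, and the WARM on $G_1$ coincides with the $2$-edge star (Example \ref{exa:star_graph} with $n=2$). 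Theorem \ref{thm_line_graph_two_edges} then supplies the equilibria $(1/2,1/2,0)$ and the $(v,u,0)$ branch together with the stability claims, which transfer verbatim via Theorem \ref{thm:subgraphs}(2).

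For (iv) and (v) I substitute the ansatz into \eqref{system1}. Writing $s = v/u$, the equation for $v_1$ in the $(v,u,u)$ ansatz, combined with $v + 2u = 1$, collapses to $s^{\alpha-1}(4 - s) = 3$. The function $g(s) = s^{\alpha-1}(4-s)$ satisfies $g(1) = 3$, has a unique maximum at $s^{*} = 4 - 4/\alpha$, and $g'(1) = 3\alpha - 4$, so a second root $s > 1$ exists iff $\alpha > 4/3$, and implicit differentiation gives $ds/d\alpha > 0$ with $s \to 4$, $v = s/(s+2) \to 2/3$ as $\alpha \to \infty$. The same procedure applied to the $(v,v,u)$ ansatz (now with $2v + u = 1$) produces $3 s^{\alpha} = 4s - 1$; the function $h(s) = 3s^{\alpha} - 4s + 1$ is convex with $h(1) = 0$ and $h'(1) = 3\alpha - 4$, so a unique root $s > 1$ exists iff $\alpha < 4/3$, and implicit differentiation shows that $v(\alpha)$ decreases along this branch from $v(1^{+}) = 1/2$ to $v(4/3^{-}) = 1/3$.

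The heart of the proof is ruling out equilibria with three distinct positive components. Suppose such $(v_1, v_2, v_3)$ exists; set $x_i = v_i^{\alpha}$ and $\beta = (1-\alpha)/\alpha \in (-1, 0)$, so that each equilibrium equation reads
\[3\, x_i^{\beta} = \frac{1}{x_i + x_j} + \frac{1}{x_i + x_k}.\]
Subtracting the equations for indices $(1,2)$ and for $(2,3)$ and then taking the ratio eliminates the right-hand sides symmetrically and leaves, after elementary manipulation,
\[\frac{x_2^{\beta} - x_1^{\beta}}{x_1^2 - x_2^2} \;=\; \frac{x_3^{\beta} - x_2^{\beta}}{x_2^2 - x_3^2}.\]
Geometrically this asserts that the three points $\bigl(x_i^2, x_i^{\beta}\bigr)$, $i = 1,2,3$, are collinear in the plane. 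But they all lie on the curve $\{(s, s^{\beta/2}) : s > 0\}$ whose second derivative is $(\beta/2)(\beta/2 - 1)\, s^{\beta/2 - 2} > 0$ (both factors being negative, since $\beta/2 \in (-1/2, 0)$), so the curve is strictly convex and cannot contain three distinct collinear points. The resulting contradiction, together with the cases handled in the first paragraph, shows the list (i)--(v) is exhaustive.

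For stability of (iv) and (v), the residual $S_2$-symmetry of the ansatz supplies one eigenvector of the Jacobian for free: $(1, -1, 0)$ for $(v,v,u)$ and $(0, 1, -1)$ for $(v,u,u)$. The remaining tangent eigenvector (in the tangent plane to the simplex) is then forced to be $(1, 1, -2)$ for (iv) and $(2, -1, -1)$ for (v). Clearing squared denominators with the consequences $S := v^{\alpha} + u^{\alpha} = \tfrac{2}{3} v^{\alpha - 1}$ for (v) (from the equation for $v_1$) and $S = \tfrac{2}{3} u^{\alpha-1}$ for (iv) (from the equation for $v_3$), and then substituting the branch equations $s^{\alpha-1}(4-s) = 3$ or $3s^{\alpha} = 4s - 1$, the tangent eigenvalues reduce to explicit rational expressions in $s$ and $\alpha$; a short check shows that along each branch one of the two is strictly positive and the other strictly negative, so (iv) and (v) are saddles and in particular not linearly stable. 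The main obstacle is the uniqueness step: many natural relations derived from the equilibrium equations turn out to be automatic identities (for instance the sum of the three subtracted equations vanishes trivially); the decisive move is to take the \emph{ratio} of two subtracted equations, which is what converts an underdetermined algebraic system into a rigid collinearity statement that is killed by a single convexity bound.
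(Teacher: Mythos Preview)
Your proposal is correct in outline, and the third paragraph is a genuine improvement over the paper. I'll comment on the two places where your route diverges.

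\textbf{Ruling out three distinct positive components.} The paper does this in two separate lemmas, splitting on whether $\alpha<4/3$ or $\alpha\ge 4/3$, and each case is a page of \emph{ad hoc} inequality chasing (introducing auxiliary variables and bounding rational functions by hand). Your argument is uniform in $\alpha$ and much cleaner. I checked it: subtracting the $i$th and $j$th equations gives $3(x_i^\beta-x_j^\beta)=(x_j-x_i)/\big((x_i+x_k)(x_j+x_k)\big)$, and taking the ratio of two such differences the factors $(x_1+x_3)$ cancel, leaving exactly the collinearity relation you state. The convexity of $t\mapsto t^{\beta/2}$ on $(0,\infty)$ for $\beta\in(-1,0)$ then kills it in one line. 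This buys you a single argument valid for all $\alpha>1$ at the cost of nothing; the paper's case split buys nothing in return.

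\textbf{Instability of (iv) and (v).} Your eigenvector claim is correct, and for the right reason: the Jacobian at $(v,u,u)$ commutes with the transposition of the last two coordinates, so it preserves the symmetric/antisymmetric decomposition of the tangent space to the simplex, forcing $(2,-1,-1)$ and $(0,1,-1)$ to be eigenvectors even though ${\bf D}$ is not symmetric. Where I would push back is on ``a short check'': the paper devotes two lemmas to this (one for each branch), and the inequality one actually needs --- e.g.\ $\eta>u-\alpha/6$ for (v) when $4/3<\alpha\le 2$ --- is not a one-liner. Your reduction via $S=\tfrac23 v^{\alpha-1}$ and the branch equation $s^{\alpha-1}(4-s)=3$ does give $\lambda_2=-1+\alpha(4-s)/4$ and an explicit rational form for $\lambda_3$, so the check is \emph{doable} in your variables, but you should expect to write it out; the paper's Lemma~\ref{lemma_triangle_N5} in particular needs a nontrivial positivity argument near the bifurcation point $\alpha=4/3$, $s=1$, where $\lambda_3$ vanishes. (Your stronger claim that both branches are saddles is not needed for the theorem.)
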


The proof will be completed by a sequence of lemmas.

\begin{LEM}\label{lemma_triangle_N1}
There exist equilibria described in items (iv) and (v) in Theorem \ref{thm_triangle_main}.
\end{LEM}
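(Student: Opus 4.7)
For both families, the plan is to impose the structural ansatz on \eqref{system1}, use the constraint $v_1+v_2+v_3=1$ coming from $\vec{v}\in\Delta_3$ to eliminate one variable, and reduce the whole system to a single scalar equation in the ratio $r=v/u$ (or $s=v/u$). Existence and uniqueness then become elementary calculus exercises; monotonicity and endpoint asymptotics follow from the implicit function theorem applied to that scalar equation.

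\textbf{Case (v): $(v,u,u)$ with $v>u$.} The second and third equations of \eqref{system1} coincide, and writing $r=v/u>1$, the first equation reduces to $v=\tfrac{2}{3}\tfrac{r^\alpha}{r^\alpha+1}$ while the third gives $u=\tfrac{1}{6}+\tfrac{1}{3(r^\alpha+1)}$. Imposing $v=ru$ and clearing denominators yields the single equation
\[
g(r)\;:=\;r^{\alpha-1}(4-r)\;=\;3.
\]
One checks $g(1)=3$ (the uniform equilibrium), $g(4)=0$, and $g$ has a unique critical point at $r^{\star}=4(\alpha-1)/\alpha$, which lies in $(1,4)$ precisely when $\alpha>4/3$; for such $\alpha$, $g$ is strictly increasing on $(1,r^{\star})$ and strictly decreasing on $(r^{\star},4)$, giving exactly one root $r(\alpha)\in(r^{\star},4)$ distinct from $r=1$. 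Differentiating $g(r)=3$ implicitly and using that $g'(r(\alpha))<0$ (we are past the maximum) gives $r'(\alpha)>0$, and since $v=\tfrac{2}{3}r^\alpha/(r^\alpha+1)$ is increasing in $r^\alpha$, which is increasing in $\alpha$, we conclude $v(\alpha)$ is strictly increasing. The limits are read off: as $\alpha\downarrow 4/3$ the critical point $r^{\star}\downarrow 1$ forces $r(\alpha)\downarrow 1$ and hence $v\downarrow 1/3$; as $\alpha\to\infty$, writing $r=4-\varepsilon$ shows $\varepsilon\sim 3\cdot 4^{1-\alpha}\to 0$, so $r\to 4$ and $v\to 2/3$.

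\textbf{Case (iv): $(v,v,u)$ with $v>u$.} By symmetric reasoning, setting $s=v/u>1$ the first equation gives $v=\tfrac{1}{6}+\tfrac{1}{3}\tfrac{s^\alpha}{s^\alpha+1}$ and the third gives $u=\tfrac{2}{3(s^\alpha+1)}$; the compatibility $v=su$ reduces to
\[
h(s)\;:=\;s^\alpha-\tfrac{4s-1}{3}\;=\;0.
\]
Now $h(1)=0$, $h'(1)=\alpha-\tfrac{4}{3}$, and $h''(s)=\alpha(\alpha-1)s^{\alpha-2}>0$, so $h$ is strictly convex on $(0,\infty)$. For $\alpha\in(1,4/3)$ we have $h'(1)<0$, so $h$ dips below zero for $s$ slightly larger than $1$ and, by convexity plus $h(s)\to\infty$, has a unique second root $s(\alpha)>1$; for $\alpha=4/3$ the root $s=1$ is a tangency and there is no other solution (matching $v(4/3-)=1/3$). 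Implicit differentiation at the second root gives $s'(\alpha)=-s^\alpha\log s/h'(s)<0$ since $h'(s)>0$ there, so $s(\alpha)$ and hence $v(\alpha)=s/(2s+1)$ is strictly decreasing. Finally, writing the defining equation as $s^{\alpha-1}=(4-1/s)/3$ shows $s(\alpha)\to\infty$ as $\alpha\downarrow 1$ (since the right side stays bounded in $[1,4/3]$ while the left side forces $(\alpha-1)\log s\to \log(4/3)$'s neighbourhood), yielding $v\to 1/2$; and $s(\alpha)\downarrow 1$ as $\alpha\uparrow 4/3$ gives $v\to 1/3$.

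\textbf{Main obstacle.} The calculus is routine; the only care needed is in the algebraic reduction—verifying that \eqref{system1} together with $v_1+v_2+v_3=1$ collapses to exactly one scalar equation per ansatz (so the two a priori independent equilibrium conditions on $v$ and $u$ become consistent only on a one-parameter family), and then tracking the monotonicity of $v$ through the monotonicity of the auxiliary ratio.
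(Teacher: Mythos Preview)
Your proof is correct and follows essentially the same strategy as the paper: impose the ansatz, use the simplex constraint, and reduce to a single scalar equation in the ratio $v/u$, then analyze that equation by elementary calculus. The only cosmetic difference is that the paper parametrizes by $t=\log(v/u)$ and unifies both cases (iv) and (v) into the single equation $(\alpha-1)t=\log\bigl(3/(4-e^t)\bigr)$ (with $t>0$ for (v) and $t<0$ for (iv)), whereas you work with the ratio directly and obtain two separate but equivalent equations $r^{\alpha-1}(4-r)=3$ and $3s^{\alpha}=4s-1$; the convexity/monotonicity arguments and endpoint computations are then the same in substance.
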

\begin{proof}
Let us consider an equilibrium $(v,u,u)$ with $v>u$. Let us denote $v/u=\e^t$, note that $t>0$.
From the condition $v+2u=1$ we find that $u=(2+\e^t)^{-1}$.
Then equation (2) in \eqref{system1}
gives us 
\beq\label{eqn_Lemma1_n1}
\frac{1}{2+\e^t}=\frac{1}{6}+\frac{1}{3} \cdot \frac{1}{1+\e^{\alpha t}},
\eeq
which can be rewritten in the form
\beqq
\e^{\alpha t}=\frac{3\e^t}{4-\e^t},
\eeqq
which is equivalent to
\beq\label{triangle_graph_eqn1}
h(t):= \ln\left( \frac{3}{4-\e^t}\right)=(\alpha-1) t.
\eeq
One can check that the function $h(t)$ is convex on $t\in (0,\log(4))$ and it satisfies $h(0)=0$ and $h'(0)=1/3$, therefore \eqref{triangle_graph_eqn1} has a positive solution $t=t(\alpha)$ if and only if $\alpha>4/3$ (and this solution is necessarily unique).
The graph of the function $t\mapsto h(t)$ is given in Figure \ref{fig1}. It is clear that $\d t/\d \alpha>0$ (see Figure \ref{fig1}), which implies that $v(\alpha)$ is an increasing function. Finally, $t(4/3)=0$ and $t(+\infty)=\ln(4)$, which gives us $v(4/3+)=1/3$ and $v(+\infty)=2/3$. This completes the proof of part (v) in Theorem \ref{thm_triangle_main}.

Let us now consider an equilibrium $(v,v,u)$ with $v>u$. This case is equivalent to the previous one, except that now we have
$u/v=\e^t$ and $t<0$. One can check that $t$ also must satisfy  \eqref{triangle_graph_eqn1}, and that 
\eqref{triangle_graph_eqn1} has a negative solution if and only if $\alpha \in (1,4/3)$. This solution $t=t(\alpha)$ is unique, and 
it satisfies $\frac{\d t}{\d \alpha}>0$,  which translates into the property that $v(\alpha)=1/(2+\e^t)$ is a decreasing function.
Since $t(4/3)=0$ and $t(1)=-\infty$ we see that $v(1+)=1/2$ and $v(4/3-)=1/3$.
\end{proof}

\begin{figure}
\centering
\captionsetup{width=0.8\textwidth}
\FIGS{\includegraphics[height =6cm]{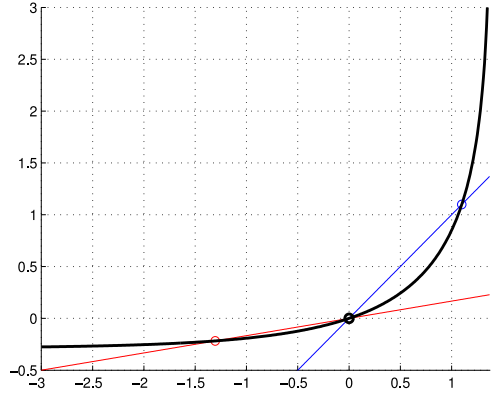}}
\caption{{\small Finding equilibriums of the form $(v,u,u)$ and $(v,v,u)$. The black curve is the graph of the function 
$y=h(t)=\ln(3/(4-\exp(t)))$, the straight lines correspond to graphs of the functions $y=(\alpha-1)t$ for 
$\alpha=2$ (blue) and $\alpha=7/6$ (red).}}
\label{fig1}
\end{figure}

\begin{LEM}\label{lemma_triangle_N2}
For $\alpha \in (1,4/3)$, there are no equilibria other than (i)-(v)  of Theorem \ref{thm_triangle_main}.
\end{LEM}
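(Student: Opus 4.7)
My plan is to exploit the combinatorial feature specific to the triangle: the two edges adjacent to each vertex are precisely the complement of a single edge. Writing $S = v_1^\alpha + v_2^\alpha + v_3^\alpha$, we have $v_i^\alpha + v_j^\alpha = S - v_k^\alpha$ whenever $\{i,j,k\} = \{1,2,3\}$. Substituting into \eqref{system1} and rearranging, every interior equilibrium (i.e.~with all $v_i > 0$) must satisfy
\begin{equation}
g(v_i) := 3 v_i^{1-\alpha} + \frac{1}{S - v_i^\alpha} = T\quad\text{for each }i\in\{1,2,3\},
\label{pf_plan_key}
\end{equation}
where $T := \sum_{\ell=1}^3 (S - v_\ell^\alpha)^{-1}$ is independent of $i$.

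Next I would verify that, for $\alpha > 1$, $g$ is strictly convex on $(0, S^{1/\alpha})$. Indeed, the second derivative of $3x^{1-\alpha}$ equals $3\alpha(\alpha-1)x^{-\alpha-1} > 0$, and the second derivative of $(S-x^\alpha)^{-1}$ equals $\alpha(\alpha-1)x^{\alpha-2}(S-x^\alpha)^{-2} + 2\alpha^2 x^{2\alpha-2}(S-x^\alpha)^{-3} > 0$. Hence the level set $\{x : g(x) = T\}$ contains at most two points, which forces the multiset $\{v_1,v_2,v_3\}$ to contain at most two distinct values. In particular, no interior equilibrium has three distinct positive coordinates.

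With this reduction the remaining cases are short. If all three coordinates agree, we obtain $(1/3,1/3,1/3)$, equilibrium (i). If exactly two distinct positive values appear, then (listing in decreasing order) the equilibrium is of the form $((v)_k,(u)_{3-k})$ with $v > u > 0$ and $k \in \{1,2\}$; by Lemma \ref{lemma_triangle_N1} the case $k=2$ (giving $(v,v,u)$) occurs only for $\alpha \in (1,4/3]$, which is equilibrium (iv), while the case $k=1$ (giving $(v,u,u)$) requires $\alpha \geq 4/3$ and is therefore excluded in the open interval $(1,4/3)$.

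Finally, I would treat boundary equilibria (where \eqref{pf_plan_key} does not apply because $v_i^{1-\alpha}$ blows up as $v_i \downarrow 0$). If exactly one coordinate vanishes, say $v_3 = 0$, the third equation of \eqref{system1} is trivially $0=0$, while the first two reduce to $v_j = 1/3 + (v_j^\alpha/3)/(v_1^\alpha + v_2^\alpha)$ for $j=1,2$, which is precisely the equilibrium system for the WARM star graph on two edges; Theorem \ref{thm_line_graph_two_edges} then gives the unique solution $v_1=v_2=1/2$ for every $\alpha < 3$, yielding equilibrium (ii) and its permutations. If two coordinates vanish, the third equals $1$, and substituting into \eqref{system1} immediately produces $1 = 2/3$, a contradiction. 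The main obstacle in this plan is spotting the triangle-specific identity that collapses \eqref{system1} to the scalar equation \eqref{pf_plan_key}; once this is in hand, the strict convexity of $g$ together with Lemma \ref{lemma_triangle_N1} and Theorem \ref{thm_line_graph_two_edges} dispose of all remaining cases.
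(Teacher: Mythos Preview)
Your argument is correct and is in fact cleaner than the paper's. The key observation that the triangle relation $v_i^\alpha+v_j^\alpha=S-v_k^\alpha$ collapses \eqref{system1} to the single scalar equation $g(v_i)=T$ with $g(x)=3x^{1-\alpha}+(S-x^\alpha)^{-1}$ strictly convex on $(0,S^{1/\alpha})$ is valid for \emph{every} $\alpha>1$, and immediately forces any interior equilibrium to have at most two distinct coordinates. The remaining reductions (Lemma~\ref{lemma_triangle_N1} to exclude $(v,u,u)$ with $v>u$ when $\alpha<4/3$ and to pin down the unique $(v,v,u)$ equilibrium, and Theorem~\ref{thm_line_graph_two_edges} for the boundary case $v_3=0$) are exactly as you describe.

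By contrast, the paper does not see this convexity shortcut. It instead divides equation~(2) of \eqref{system1} by equation~(1), introduces the variables $e^{-s}=(v_2/v_1)^\alpha$ and $a=(v_2/v_3)^\alpha$, and proves by a direct (and somewhat laborious) derivative computation that the resulting function $f_a(s)$ satisfies $f_a'(s)>1/4$ for all $a\ge 1$ and $s>0$. Since $(1-1/\alpha)\le 1/4$ precisely when $\alpha\le 4/3$, this forces $s=0$, i.e.\ $v_1=v_2$. That argument is tailored to the range $\alpha\in(1,4/3)$; the companion Lemma~\ref{lemma_triangle_N3} then needs a separate, equally hands-on estimate for $\alpha\ge 4/3$. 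Your convexity argument handles both ranges at once and would allow the two lemmas to be merged into a single short statement.
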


\begin{proof}
Assume that $(v,u,0)$ is an equilibrium. Then $(v,u)$ is an equilibrium for the line graph with two edges, and Theorem \ref{thm_line_graph_two_edges} shows that for $\alpha \in (1,3]$ the only such equilibrium is $(1/2,1/2)$, and for $\alpha>3$ there are two such equilibria, 
$(1/2,1/2,0)$ and $(v,1-v,0)$. This shows that there do not exist any other equilibria of the form
$(v,1-v,0)$. 
Let us consider $(v_1,v_2,v_3)$, where $v_1\ge v_2\ge v_3>0$. We will show that if $\alpha\in (1,4/3)$ and $(v_1,v_2,v_3)$ is
an equilibrium, then necessarily $v_1=v_2$.  Assume $v_1>v_2$. 
We introduce the new variables $s>0$ and $a\ge 1$
\beqq
\left(\frac{v_2}{v_1}\right)^{\alpha}=\e^{-s}, \qquad\quad \left(\frac{v_2}{v_3}\right)^{\alpha}=a.
\eeqq
Dividing the second equation in \eqref{system1} by the first one we get
\beqq
\frac{v_2}{v_1}=\frac{\frac{1}{1+\left(\frac{v_3}{v_2}\right)^{\alpha}}+\frac{1}{1+\left(\frac{v_1}{v_2}\right)^{\alpha}}}
{\frac{1}{1+\left(\frac{v_2}{v_1}\right)^{\alpha}}+\frac{1}{1+\left(\frac{v_3}{v_1}\right)^{\alpha}}}
\eeqq
In our new notation, this is equivalent to 
\beqq
\e^{-\frac{s}{\alpha}}=\frac{\frac{1}{1+a^{-1}}+\frac{1}{1+\e^s}}{\frac{1}{1+\e^{-s}}+\frac{1}{1+a^{-1}\e^{-s}}}.
\eeqq
We rewrite the above equation as 
\beqq
\e^{\sss (1-\frac{1}{\alpha})s}=\frac{\frac{a}{1+a}+\frac{1}{1+\e^s}}{\frac{1}{1+\e^s}+\frac{a}{1+a\e^{s}}},
\eeqq
and this is equivalent to
\beq\label{eqn_main}
\left(1-\frac{1}{\alpha}\right)s=\ln(1+2a+a\e^s)+\ln(1+a\e^s)-\ln(1+a+2a\e^s)-\ln(1+a)=:f_a(s).
\eeq
We will show that for all $a\ge 1$ and for all $\beta:=(1-1/\alpha) \in [0, 1/4]$, the equation $f_a(s)=\beta s$, $s\ge 0$ has 
a unique solution $s=0$, which implies that $v_1=v_2$. 
We calculate 
\beqq
f_a'(s)=1-\frac{1+2a}{1+2a+a\e^s}-\frac{1}{1+a\e^s}+\frac{1+a}{1+a+2a\e^s},
\eeqq
which shows that
\beqq
4f_a'(s)-1=\frac{6a^3\e^{3s} + 3a^2(a+1)\e^{2s} + (6a^3 - 8a^2 - 4a)\e^{s} - 2a^2 - 3a - 1}
{(1+2a+a\e^s)(1+a\e^s)(1+a+2a\e^s)}.
\eeqq
Note that, for all $s > 0$,
\beqq
6a^3\e^{3s}+(3a^3 - 8a^2 - 4a)\e^{s} > 6a^3\e^{s}+(6a^3 - 8a^2 - 4a)\e^{s}=
4a\e^{s}(3a^2-2a-1) \ge 0, \;\;\; {\textnormal{for all}} \; a \ge 1,
\eeqq
and 
\beqq
3a^2(a+1)\e^{2s}- 2a^2 - 3a - 1 > 3a^3+a^2-3a-1 =(3a+1)(a^2-1) \ge 0, \;\;\; {\textnormal{for all}} \; a \ge 1.
\eeqq
Therefore we have proved that $f_a'(s)>1/4$ for all $a\ge 1$ and all $s > 0$. As a result,
for all $\beta\in(0,1/4)$ it is true that the function $s\mapsto f_a(s)-\beta s$ is strictly increasing, and since $f_a(0)=0$ it shows 
that the only non-negative solution to $f_a(s)=\beta s$ is $s=0$. 
\end{proof}

\begin{LEM}\label{lemma_triangle_N3}
For $\alpha \ge 4/3$ there are no equilibria other than (i)-(v)   of Theorem \ref{thm_triangle_main}.
\end{LEM}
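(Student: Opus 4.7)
I would proceed exactly as in Lemma \ref{lemma_triangle_N2}, splitting into three cases depending on the number of zero and equal coordinates of an ordered equilibrium $v_1 \ge v_2 \ge v_3$. Equilibria with $v_3 = 0$ reduce via Theorem \ref{thm_line_graph_two_edges} to the two-edge star graph, yielding $(1/2, 1/2, 0)$ for all $\alpha > 1$ (equilibrium (ii)) and $(v, u, 0)$ with $v > u$ for $\alpha > 3$ (equilibrium (iii)). Equilibria with two equal positive coordinates satisfy the equation $h(t) = (\alpha-1)t$ from Lemma \ref{lemma_triangle_N1}, where $h(t) = \ln(3/(4-\e^t))$ is convex with $h'(0) = 1/3$; hence positive (resp.\ negative) roots exist iff $\alpha > 4/3$ (resp.\ $\alpha < 4/3$), so for $\alpha \ge 4/3$ only equilibria (i) and (v) appear in this subcase.

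The main obstacle is to rule out equilibria with $v_1 > v_2 > v_3 > 0$. Setting $u = (v_1/v_2)^\alpha > 1$, $w = (v_2/v_3)^\alpha > 1$, and $\beta = 1 - 1/\alpha \ge 1/4$, I would divide equation (3) by equation (2) in \eqref{system1} and clear denominators to obtain
\begin{equation*}
w^{1-\beta}(1+u)(2+w+uw) = (1+uw)(1+2w+uw).
\end{equation*}
The polynomial identity
\begin{equation*}
(1+uw)(1+2w+uw) - (1+u)(2+w+uw) = (w-1)(1+2u+2uw+u^2 w)
\end{equation*}
then rewrites this as
\begin{equation*}
(1+u)(2+w+uw) \cdot \frac{w^{1-\beta} - 1}{w-1} = 1 + 2u + 2uw + u^2 w.
\end{equation*}
Strict concavity of $w \mapsto w^{1-\beta}$ gives $(w^{1-\beta} - 1)/(w-1) < 1-\beta$ for $w > 1$, so at any such equilibrium one must have the strict inequality $(1-\beta)(1+u)(2+w+uw) > 1 + 2u + 2uw + u^2 w$.

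The crux is to establish the reverse (non-strict) inequality
\begin{equation*}
(1-\beta)(1+u)(2+w+uw) \le 1 + 2u + 2uw + u^2 w \qquad \forall\, u, w \ge 1,\ \beta \ge 1/4,
\end{equation*}
which would yield the contradiction. A direct expansion reveals that the difference of the right minus the left side equals $\beta(1+u)[2+(1+u)w] - (1+w)$, which is non-negative iff $\beta \ge (1+w)/[(1+u)(2+(1+u)w)]$. The right-hand fraction is manifestly decreasing in $u$ (its denominator is strictly increasing in $u$) and evaluates to exactly $1/4$ at $u = 1$, independent of $w$. Hence it is bounded above by $1/4$ throughout $\{u \ge 1,\, w \ge 1\}$, so the required inequality holds for all $\beta \ge 1/4$. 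The pleasing feature of this argument is that the sharp constant $1/4$ matches precisely the bifurcation value $\beta = 1/4$ (i.e.\ $\alpha = 4/3$) above which the lemma applies.
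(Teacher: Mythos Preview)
Your proof is correct. The overall structure---reducing to the case $v_1>v_2>v_3>0$, dividing the third equilibrium equation by the second, and playing a concavity bound for $w\mapsto w^{1-\beta}$ against an algebraic bound with sharp constant $1/4$---coincides with the paper's strategy. The difference lies in the algebraic bookkeeping for the ``other side'' of the inequality. The paper rewrites the equation as $L=R$ with $L=1-(\rho-1)/(\rho^{\alpha}-1)$ (where $\rho=v_2/v_3$) and $R$ a rational expression in $x=v_2/v_1$, $y=v_3/v_2$; it then shows $L>1-1/\alpha\ge 1/4$ by the same secant/tangent inequality you use, and separately proves $R\le 1/4$ by a two-variable optimization: first maximizing a rational function in $s=x^{\alpha}$, then checking monotonicity of $t^{\alpha-1}/(3+t^{\alpha})$ in $t=y$, which is where the hypothesis $\alpha\ge 4/3$ enters. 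Your polynomial factorization $(1+uw)(1+2w+uw)-(1+u)(2+w+uw)=(w-1)(1+2u+2uw+u^2w)$ sidesteps this entirely: after dividing by $w-1$ the target inequality collapses to $\beta\ge (1+w)/[(1+u)(2+(1+u)w)]$, which is visibly at most $1/4$ by monotonicity in $u$ alone, with equality at $u=1$ for \emph{every} $w$. This is shorter, avoids the two-variable supremum, and makes the sharpness of the constant $1/4$ (hence of the threshold $\alpha=4/3$) more transparent; the paper's route, on the other hand, keeps the original coordinates throughout and does not require spotting the factorization.
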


\begin{proof}
 We assume that $\alpha\ge 4/3$ and $v_2>v_3>0$, our goal is to show that this leads to a contradiction. 
 We start by rewriting the second and the third equations in \eqref{system1} as follows
\beqq
\frac{3}{v_2^{\alpha-1}}&=&\frac{a+2b+c}{(a+b)(b+c)}, \\
\frac{3}{v_3^{\alpha-1}}&=&\frac{a+b+2c}{(a+c)(b+c)},
\eeqq
where we have denoted $a=v_1^{\alpha}$, $b=v_2^{\alpha}$ and $c=v_3^{\alpha}$.
Dividing the second equation by the first one we obtain
\beqq
\left(\frac{v_2}{v_3}\right)^{\alpha-1}=\frac{(a+b+2c)(a+b)}{(a+2b+c)(a+c)}.
\eeqq
Some simple algebra shows that the above equation is equivalent to
\beqq
\left(\frac{v_2}{v_3}\right)^{\alpha-1}-1=\frac{b^2-c^2}{(a+2b+c)(a+c)}.
\eeqq
Since $b^2-c^2=(b-c)(b+c)=(b/c-1)(b+c)c$, the previous equation can be rewritten as
\beq\label{eqn1}
\frac{v_2}{v_3}\times \frac{\left(\frac{v_2}{v_3}\right)^{\alpha-1}-1}{\left(\frac{v_2}{v_3}\right)^{\alpha}-1}=\frac{v_2}{v_3}\times \frac{(b+c)c}{(a+2b+c)(a+c)}.
\eeq
Let us denote the expression in the left-hand side \{resp. in the right-hand side\} as $L$ \{resp. $R$\}. Our first goal is to 
prove that $L>1/4$.  Let us denote $w=v_2/v_3$, note that $w>1$. Then 
\beq\label{def_L}
L:=w\frac{w^{\alpha-1}-1}{w^{\alpha}-1}=1-\frac{w-1}{w^{\alpha}-1}. 
\eeq
It is easy to check that for all $\alpha>1$ the function $z\mapsto (z^{\alpha}-1)/(z-1)$ is strictly increasing for $z\in (1,\infty)$, therefore we have
\beqq
\frac{w^{\alpha}-1}{w-1} > \lim\limits_{z\to 1^+} \frac{z^{\alpha}-1}{z-1}=\alpha. 
\eeqq
This implies $(w-1)/(w^{\alpha}-1)<1/\alpha$ and 
\beq\label{inequality_L}
L=1-\frac{w-1}{w^{\alpha}-1}>1-\frac{1}{\alpha}\ge 1/4.
\eeq

Our second goal is to prove that $R\le 1/4$. Let us denote 
$x=v_2/v_1$ and $y=v_3/v_2$, so that $v_2=xv_1$ and $v_3=xyv_1$.  Note that the inequality 
$v_1 \ge v_2 > v_3>0$ implies $0<x\le 1$ and $0<y<1$. We rewrite the right-hand side in \eqref{eqn1} as
\beq\label{def_R}
R:=\frac{v_2}{v_3}\times \frac{(b+c)c}{(a+2b+c)(a+c)}&=&\frac{v_2(v_2^{\alpha}+v_3^{\alpha})v_3^{\alpha-1}}
{(v_1^{\alpha}+2v_2^{\alpha}+v_3^{\alpha})(v_1^{\alpha}+v_3^{\alpha})}\\ \nonumber
&=&
\frac{x^{2\alpha} y^{\alpha-1}(1+y^{\alpha})}{(1+x^{\alpha}(2+y^{\alpha}))(1+x^{\alpha}y^{\alpha})}=:f(x,y).
\eeq
First we check that for all $q>0$ the function  $z\mapsto z^2/((1+z(2+q))(1+zq))$ is increasing for $z>0$, thus
\beqq
\sup\limits_{0<z\le 1} \frac{z^2}{(1+z(2+q))(1+zq)}=\frac{z^2}{(1+z(2+q))(1+zq)} \Big \vert_{z=1}=\frac{1}{(3+q)(1+q)}.
\eeqq
Therefore from the above identity and \eqref{def_R} we obtain
\beq \nonumber\label{eqn2}
R \le 
\sup\limits_{0<t<1} \left[
\sup\limits_{0<s\le 1} f(s,t) \right]&=&
\sup\limits_{0<t<1} t^{\alpha-1}(1+t^{\alpha}) \left[
\sup\limits_{0<s\le 1} \frac{s^{2\alpha}} {(1+s^{\alpha}(2+t^{\alpha}))(1+s^{\alpha}t^{\alpha})}\right]\\
&=&
\sup\limits_{0<t<1} \frac{t^{\alpha-1}}{3+t^{\alpha}}.
\eeq
Consider the function $g(t):=t^{\alpha-1}/(3+t^{\alpha})$. We compute
\beqq
\frac{\d g(t)}{\d t}=\frac{t^{\alpha-2}(3(\alpha-1)-t^{\alpha})}{(3+t^{\alpha})^2}.
\eeqq
Since $3(\alpha-1)\ge 1$ for $\alpha\ge 4/3$, we see that $\d g(t)/ \d t>0$ for $0<t<1$, thus $g(t)$ is increasing for $t\in (0,1)$ and 
\beqq
\sup\limits_{\substack{0<s\le 1\\0<t<1}} f(s,t) =\sup\limits_{0<t<1} \frac{t^{\alpha-1}}{3+t^{\alpha}}=\frac{t^{\alpha-1}}{3+t^{\alpha}} \Big \vert_{t=1}=\frac{1}{4}. 
\eeqq 
The above equation combined with \eqref{eqn1}, \eqref{inequality_L} and \eqref{eqn2} imply 
$1/4<L=R\le 1/4$. 
This shows that our initial assumption $v_2>v_3>0$ can not be true, therefore $v_3=0$ or $v_2=v_3$.
\end{proof}

\begin{LEM}\label{lemma_triangle_N4}
Let us define 
\beqq
\eta:=\frac{\alpha (uv)^{\alpha}}{3(u^{\alpha}+v^{\alpha})^2}.
\eeqq
An equilibrium of the form $(v,u,u)$ or $(u,u,v)$ for $v>u$ is linearly stable if and only if both $\eta<uv$ and $\eta<u-\frac{\alpha}{6}$.
\end{LEM}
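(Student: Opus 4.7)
The plan is to compute the Jacobian $\mathbf{D}(\vec{v})$ at $\vec{v} = (v,u,u)$ (the case $(u,u,v)$ follows by permutation symmetry) using formulas \eqref{Dalpha_ii}--\eqref{Dalpha_ik}, and to exploit the involution swapping coordinates $2$ and $3$ to block-diagonalize $\mathbf{D}$ into a $1\times 1$ and a $2\times 2$ block.

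First I would compute the nine entries of $\mathbf{D}$ using \eqref{Dalpha_ii}--\eqref{Dalpha_ik} with $p_{\{i,j\}} = 1/3$ for each of the three two-element subsets of edges. Writing $U = u^\alpha$, $V = v^\alpha$ and invoking the definition of $\eta$, the entries simplify to $D_{1,1} = -1 + 2\eta/v$, $D_{2,2} = D_{3,3} = -1 + \eta/u + \alpha/(12u)$, $D_{1,2} = D_{1,3} = -\eta/u$, $D_{2,1} = D_{3,1} = -\eta/v$, and $D_{2,3} = D_{3,2} = -\alpha/(12u)$.

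The permutation of coordinates $2$ and $3$ commutes with $\mathbf{D}$, so its $(\pm 1)$-eigenspaces are $\mathbf{D}$-invariant. The vector $(0,1,-1)^{\mathrm T}$ spans the $(-1)$-eigenspace and gives an eigenvalue $\lambda_1 = D_{2,2} - D_{2,3} = -1 + \eta/u + \alpha/(6u)$, so $\lambda_1<0$ iff $\eta<u-\alpha/6$, matching one of the claimed conditions. On the $(+1)$-eigenspace, spanned by $(1,0,0)^{\mathrm T}$ and $(0,1,1)^{\mathrm T}$, $\mathbf{D}$ restricts to
\[
M = \begin{pmatrix} -1 + 2\eta/v & -2\eta/u \\ -\eta/v & -1 + \eta/u \end{pmatrix},
\]
whose two eigenvalues account for the remaining eigenvalues of $\mathbf{D}$; by the Routh--Hurwitz criterion they both have negative real parts iff $\mathrm{tr}(M)<0$ and $\det(M)>0$.

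The key step is the algebraic identity $2\eta/v + \eta/u = \eta/(uv)$, which holds on the equilibrium manifold. It is obtained by rearranging the two equilibrium equations in \eqref{system1} as $V/(V+U) = 3v/2$ and $U/(V+U) = 3u - 1/2$, yielding $2\eta/v = \alpha(3u-1/2)$ and $\eta/u = \alpha v(3u-1/2)/(2u)$; their sum factors as $\alpha(3u-1/2)(2u+v)/(2u)$ and then collapses, via the balance equation $2u+v=1$, to $\alpha(3u-1/2)/(2u)$, which equals $\eta/(uv)$. Granting this identity, $\det(M) = 1 - \eta/(uv)$ and $\mathrm{tr}(M) = -2 + \eta/(uv)$, so $\det(M)>0 \iff \eta<uv$, and this automatically forces $\mathrm{tr}(M)<-1<0$. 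Combined with $\lambda_1<0 \iff \eta<u-\alpha/6$, linear stability holds iff both strict inequalities are in force; criticality corresponds to equality in one condition while the other is strict.

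The main obstacle is verifying the identity $2\eta/v + \eta/u = \eta/(uv)$. Off the equilibrium this is not an algebraic identity, so one must carefully substitute the equilibrium relations; it is precisely the confluence of $V/(V+U) = 3v/2$, $U/(V+U) = 3u-1/2$, and $2u+v=1$ that produces the clean stability condition $\eta<uv$ and the elegant formula $\det(M) = 1 - \eta/(uv)$.
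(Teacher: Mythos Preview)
Your proof is correct and arrives at the same three eigenvalues as the paper, but the organisation differs. The paper simply writes down the $3\times 3$ Jacobian (your entries agree with theirs) and factors the characteristic polynomial directly as
\[
\det({\bf D}-\lambda{\bf I})=-(\lambda+1)\Bigl(\lambda+1-\tfrac{\eta}{uv}\Bigr)\Bigl(\lambda+1-\tfrac{\alpha+6\eta}{6u}\Bigr),
\]
using $v+2u=1$, and reads off the eigenvalues. Your symmetry decomposition into the $(\pm1)$-eigenspaces of the swap of coordinates $2$ and $3$ is a more conceptual explanation of \emph{why} this factorisation occurs, and it generalises more readily; the paper's route is shorter but less illuminating.

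One remark: you significantly overcomplicate what you call the ``key identity'' $2\eta/v+\eta/u=\eta/(uv)$. This follows immediately from the balance equation alone, since $2/v+1/u=(2u+v)/(uv)=1/(uv)$; the additional equilibrium relations $V/(V+U)=3v/2$ and $U/(V+U)=3u-1/2$ are not needed here (though they are correct). With this simplification your $\det(M)=1-\eta/(uv)$ and $\mathrm{tr}(M)=-2+\eta/(uv)$ drop out in one line, and in fact the eigenvalues of $M$ are exactly $-1$ and $-1+\eta/(uv)$, matching the paper's list.
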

\begin{proof}
Assume that $(v_1,v_2,v_3)=(v,u,u)$ and $v\neq u$. The Jacobian matrix is of the form
\begin{align}
{\bf D}=\begin{pmatrix}
-1+\frac{2\eta}{v} & -\frac{\eta}{u} & -\frac{\eta}{u}\\
-\frac{\eta}{v} & -1+\frac{\alpha}{12 u}+\frac{\eta}{u} &  -\frac{\alpha}{12 u}\\
-\frac{\eta}{v} & -\frac{\alpha}{12 u} & -1+\frac{\alpha}{12 u}+\frac{\eta}{u}
\end{pmatrix}.
\end{align}
One can check that 
\beqq
{\textnormal{det}}({\bf D}-\lambda {\bf I})=-(\lambda+1)\left(\lambda+1-\frac{\eta}{uv}(v+2u)\right)
\left(\lambda+1-\frac{\alpha+6\eta}{6u}\right).
\eeqq
Since $v+2u=1$ we see that the eigenvalues are
\beqq
\lambda_1=-1, \;\;\; \lambda_2=-1+\frac{\eta}{uv}, \;\;\; \lambda_3=-1+\frac{\alpha+6\eta}{6u}.
\eeqq
\end{proof}

\begin{LEM}\label{lemma_triangle_N5}
The equilibrium of Theorem \ref{thm_triangle_main}(iv) is not linearly stable.
\end{LEM}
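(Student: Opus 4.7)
The plan is to diagonalise the Jacobian at a Case~(iv) equilibrium and exhibit a non-negative eigenvalue. I first replicate the Jacobian computation of Lemma~\ref{lemma_triangle_N4}, but now for the multiset $\{v,v,u\}$ (with $v>u$) instead of $\{v,u,u\}$, using \eqref{Dalpha_ii}--\eqref{Dalpha_ik}. The reflection symmetry swapping the two ``$v$'' coordinates still applies, and diagonalising on the invariant subspaces spanned by $(1,-1,0)$ (antisymmetric side) and by $(1,1,0),(0,0,1)$ (symmetric side) produces three eigenvalues
\[
\lambda_1=-1,\qquad \lambda_2=-1+\frac{\eta}{uv},\qquad \lambda_3=-1+\frac{\alpha+6\eta}{6v},
\]
with $\eta=\alpha(uv)^{\alpha}/[3(u^{\alpha}+v^{\alpha})^2]$ as in Lemma~\ref{lemma_triangle_N4}. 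Only the denominator of $\lambda_3$ changes (from $6u$ to $6v$), since $v$ rather than $u$ is now the repeated value. Linear stability in particular requires $\lambda_2<0$, i.e.\ $\eta<uv$.

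Next I turn the stability condition into a single inequality in $v$ and $\alpha$. The equilibrium equations at $(v,v,u)$ give $v^{\alpha}/(v^{\alpha}+u^{\alpha})=3v-\tfrac{1}{2}$ and $u^{\alpha}/(v^{\alpha}+u^{\alpha})=\tfrac{3u}{2}$, so $\eta=(\alpha u/2)(3v-\tfrac{1}{2})$; using $u>0$, the condition $\eta<uv$ collapses to $v<w$, where $w:=\alpha/(2(3\alpha-2))$. Dividing the two ratio identities and substituting $u=1-2v$ reduces the equilibrium condition to $F(v):=3v^{\alpha}-(6v-1)(1-2v)^{\alpha-1}=0$. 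One checks $F(1/3)=0$, $F(1/2)=3\cdot 2^{-\alpha}>0$, and $F'(1/3)=(1/3)^{\alpha-2}(3\alpha-4)<0$ for $\alpha<4/3$; Lemma~\ref{lemma_triangle_N1} then identifies the Case~(iv) value $v^{*}$ as the unique root of $F$ in $(1/3,1/2)$.

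It therefore suffices to show $F(w)<0$ for $\alpha\in(1,4/3)$, as this forces $v^{*}\in(w,1/2)$ and hence $\lambda_2>0$. Using $6w-1=2/(3\alpha-2)$ and $1-2w=2(\alpha-1)/(3\alpha-2)$, direct substitution reveals that $F(w)$ equals $[3\alpha^{\alpha}-4^{\alpha}(\alpha-1)^{\alpha-1}]/[2^{\alpha}(3\alpha-2)^{\alpha}]$, so the goal becomes the one-variable inequality $\phi(\alpha):=\alpha\ln(4/\alpha)+(\alpha-1)\ln(\alpha-1)>\ln 3$ on $(1,4/3)$. A short calculation yields $\phi'(\alpha)=\ln\bigl(4(\alpha-1)/\alpha\bigr)$, which is strictly negative on $(1,4/3)$ and vanishes at $\alpha=4/3$, while $\phi(4/3)=\tfrac{4}{3}\ln 3-\tfrac{1}{3}\ln 3=\ln 3$. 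This gives the required strict inequality on the open interval, with equality only at the endpoint $\alpha=4/3$ where $v^{*}=w=1/3$ (the Case~(iv) branch merges with the critical $\vec{1}/3$ equilibrium). The main bookkeeping obstacle is the block-symmetric Jacobian diagonalisation in the first step, since Lemma~\ref{lemma_triangle_N4} does not directly cover the $\{v,v,u\}$ multiset; the remaining argument is routine algebra plus the one-variable calculus check.
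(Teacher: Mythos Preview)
Your argument is correct. You target the same eigenvalue as the paper, namely $\lambda_2=-1+\eta/(uv)$, and you show $\eta>uv$ (hence $\lambda_2>0$) by an explicit comparison: reducing the question to $F(w)<0$ with $w=\alpha/(2(3\alpha-2))$, and then to the one-variable inequality $\phi(\alpha)=\alpha\ln(4/\alpha)+(\alpha-1)\ln(\alpha-1)>\ln 3$ on $(1,4/3)$, which you verify by noting $\phi$ is strictly decreasing there with $\phi(4/3)=\ln 3$.

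The paper takes a different, shorter route to the same inequality $\eta>uv$. (A remark on labeling: the paper's written proofs of Lemmas~\ref{lemma_triangle_N5} and~\ref{lemma_triangle_N6} appear to be transposed; the argument relevant to item~(iv), the $(v,v,u)$ branch with $\alpha\in(1,4/3)$, is the one printed under Lemma~\ref{lemma_triangle_N6}.) There, one differentiates the implicit equilibrium relation \eqref{eqn_Lemma1_n1} in the variable $t=\ln(u/v)$ with respect to $\alpha$; using $t<0$ and $dt/d\alpha>0$ (already established in Lemma~\ref{lemma_triangle_N1}) immediately yields
\[
\frac{\e^t}{(2+\e^t)^2}<\frac{\alpha}{3}\cdot\frac{\e^{\alpha t}}{(1+\e^{\alpha t})^2},
\]
which, after substituting $\e^t=u/v$ and $v=1/(2+\e^t)$, is exactly $uv<\eta$. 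This derivative trick is the same mechanism as in Lemma~\ref{star_graph_lemma_4}: stability of the $\eta/(uv)$ eigenvalue is equivalent to a monotonicity direction of the equilibrium branch, and the sign of that monotonicity was already determined when the branch was constructed. Your approach avoids this indirect step at the cost of an extra explicit computation, but it is self-contained and does not rely on the sign of $dt/d\alpha$ from Lemma~\ref{lemma_triangle_N1}. Both are valid; the paper's method generalises more readily (it is reused for star and whisker graphs), while yours gives a concrete quantitative picture of where $v^*$ sits relative to the stability threshold $w$.
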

\begin{proof}
Assume that $(v,u,u)$ is an equilibrium, such that $v>u$ and $\alpha>4/3$.  In order to show that 
$(v,u,u)$ is not a linearly stable equilibrium it is enough to prove that that $\eta>u-\alpha/6$ (see Lemma \ref{lemma_triangle_N4}).
Define $r=v/u$. The condition $\eta>u-\alpha/6$ is equivalent to 
\beqq
\frac{1}{2}+\frac{r^{\alpha}}{(1+r^{\alpha})^2}>\frac{3}{\alpha(2+r)}.
\eeqq
This inequality is obvious if $\alpha>2$, so we only need to consider $\alpha \in (4/3,2]$. Let us introduce the new variable $z=r^{\frac{\alpha}{2}}-1$, 
so that $r=(1+z)^{\frac{2}{\alpha}}$. With this notation, we need to prove that for all $\alpha \in (4/3,2]$ and all $z>0$
\beqq
\frac{1}{2}+\frac{(1+z)^2}{(1+(1+z)^2)^2}>\frac{3}{\alpha(2+(1+z)^{\frac{2}{\alpha}})}.
\eeqq
For all $\alpha \in (4/3,2]$ and all $z>0$ we have $(1+z)^{\frac{2}{\alpha}}\ge 1+z$, therefore
\beqq
\frac{3}{\alpha(2+(1+z)^{\frac{2}{\alpha}})}\leq \frac{3}{\alpha(3+z)}< \frac{9}{4(3+z)}.
\eeqq
So it is enough to show that for all $z>0$ 
\beqq
\frac{1}{2}+\frac{(1+z)^2}{(1+(1+z)^2)^2}>\frac{9}{4(3+z)}.
\eeqq
Multiplying both sides by $(1+(1+z)^2)^2(3+z)$ and simplifying the resulting expressions, we obtain that the above inequality is equivalent to
\beqq
2z^5 + 5z^4 + 8z^3 + 12z^2 + 12z>0 \;\;\; {\textnormal{for all}} \; z>0,
\eeqq
which is obviously true. 
\end{proof}

\begin{LEM}\label{lemma_triangle_N6}
The equilibrium of Theorem \ref{thm_triangle_main}(v) is not linearly stable.
\end{LEM}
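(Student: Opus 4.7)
My plan is to exhibit a positive eigenvalue of the Jacobian at the equilibrium. By the permutation-symmetry of the triangle WARM, the adjugate computation of Lemma~\ref{lemma_triangle_N4} carries over verbatim after the appropriate index permutation, and the three eigenvalues of ${\bf D}$ at an equilibrium of the form $(v,v,u)$ with $v>u$ become
\[
\lambda_1=-1,\qquad \lambda_2=-1+\eta/(uv),\qquad \lambda_3=-1+(\alpha+6\eta)/(6v),
\]
where $\eta=\alpha(uv)^\alpha/(3(u^\alpha+v^\alpha)^2)$ is symmetric in $u,v$. The only change from Lemma~\ref{lemma_triangle_N4} is that the repeated value $v$ replaces $u$ in the denominator of $\lambda_3$; I would verify this by rerunning the row-operation/adjugate computation of that lemma with the indices permuted.

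It will suffice to prove $\lambda_2>0$. Parametrising the equilibrium as in Lemma~\ref{lemma_triangle_N1} via $t=\ln(u/v)<0$ and $v=1/(2+\e^t)$, the equilibrium equation is $3\e^{(1-\alpha)t}=4-\e^t$, i.e.~\eqref{triangle_graph_eqn1}. Substituting $\e^{\alpha t}=3\e^t/(4-\e^t)$ into $\eta=\alpha\e^{\alpha t}/(3(1+\e^{\alpha t})^2)$ and simplifying yields the compact identity
\[
\frac{\eta}{uv} \;=\; \frac{\alpha(4-\e^t)}{4},
\]
so $\lambda_2>0$ is equivalent to $\e^t < x_\star := 4(\alpha-1)/\alpha$.

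To establish this bound, I would set $F(x):=(4-x)x^{\alpha-1}$ on $(0,4)$, so that the equilibrium equation rewrites as $F(\e^t)=3$ and the trivial root $x=1$ corresponds to the central equilibrium $(1/3,1/3,1/3)$. Since $F'(x)=x^{\alpha-2}(4(\alpha-1)-\alpha x)$, the function $F$ is strictly unimodal on $(0,4)$ with unique maximum at $x_\star$. In the relevant range one has $x_\star<1$, so $F(x_\star)>F(1)=3$, and the equation $F(x)=3$ has exactly two roots in $(0,4)$: the trivial $x=1\in(x_\star,4)$ and a second root lying in $(0,x_\star)$. The nontrivial equilibrium of interest has $\e^t<1$, so it must coincide with this second root; hence $\e^t<x_\star$ as required, giving $\lambda_2>0$ and linear instability.

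The main obstacle will be the first step: redoing the adjugate computation of Lemma~\ref{lemma_triangle_N4} with indices permuted and checking carefully that the repeated value $v$ (rather than $u$) emerges in the denominator of $\lambda_3$. Beyond that, everything reduces to a clean unimodality analysis of $F$; at the boundary value of $\alpha$ one has $x_\star=1$ and the two roots of $F=3$ merge, so that the equilibrium degenerates into $(1/3,1/3,1/3)$ and $\lambda_2=0$, consistent with the claimed criticality at the regime transition.
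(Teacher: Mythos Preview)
Your argument is correct and reaches the same conclusion as the paper's proof---namely that $\lambda_2=-1+\eta/(uv)>0$---but by a genuinely different route. The paper differentiates the equilibrium relation \eqref{whisker_eqn1} (here \eqref{eqn_Lemma1_n1}) with respect to $\alpha$, and uses $t<0$ together with $t'=\d t/\d\alpha>0$ (established in Lemma~\ref{lemma_triangle_N1}) to get the strict inequality
\[
\frac{\e^t}{(2+\e^t)^2}<\frac{\alpha}{3}\cdot\frac{\e^{\alpha t}}{(1+\e^{\alpha t})^2},
\]
which is exactly $uv<\eta$. You instead derive the closed-form identity $\eta/(uv)=\alpha(4-\e^t)/4$ by substituting the equilibrium relation into $\eta$, and then locate $\e^t$ below the critical point $x_\star=4(\alpha-1)/\alpha$ via the unimodality of $F(x)=(4-x)x^{\alpha-1}$. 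Your method is more explicit and self-contained (it does not rely on the sign of $t'$ from an earlier lemma), at the cost of a short extra monotonicity analysis; the paper's method is shorter but leans on the implicit-differentiation trick already used in Lemmas~\ref{star_graph_lemma_4} and \ref{lemma_triangle_N6}.

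One small remark: the ``main obstacle'' you flag---recomputing $\lambda_3$ with the repeated value $v$ in the denominator---is in fact unnecessary for your argument, since you only use $\lambda_2$, and the formula $\lambda_2=-1+\eta/(uv)$ is symmetric in $u,v$ and hence carries over from Lemma~\ref{lemma_triangle_N4} without any recomputation.
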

\begin{proof}
We will show that the first condition of Lemma \ref{lemma_triangle_N4} is not satisfied, that is 
$\eta>uv$ for all $\alpha>4/3$.

Assume that $(v,v,u)$ is an equilibrium. Consider the same parameterization as in the proof of  Lemma \ref{lemma_triangle_N1}: $u/v=\e^t$, 
$v=(2+\e^t)^{-1}$. Note that $t<0$ and from the proof of Lemma \ref{lemma_triangle_N1} we know that $\frac{\d t}{\d \alpha}>0$. 
We consider $t$ as a function of $\alpha$. Equation \eqref{eqn_Lemma1_n1} gives us
\beqq
\frac{\d}{\d \alpha}\left[\frac{1}{2+\e^t}\right]=\frac{\d}{\d \alpha} \left[\frac{1}{6}+\frac{1}{3} \cdot \frac{1}{1+\e^{\alpha t}}\right],
\eeqq
which is equivalent to
\beqq
\frac{\e^t t'}{(2+\e^t)^2}=\frac{1}{3} \cdot \frac{\e^{\alpha t}(t+\alpha t')}{(1+\e^{\alpha t})^2},
\eeqq
where $t':=\frac{\d t}{\d \alpha}$. 
Since $t<0$ and $t'>0$,
\beqq
\frac{\e^t}{(2+\e^t)^2}<\frac{1}{3} \cdot \frac{\e^{\alpha t}\alpha}{(1+\e^{\alpha t})^2}.
\eeqq
Since $\e^t=u/v$ and $(2+\e^t)^{-1}=v$, the above inequality gives us
\beqq
uv<\frac{\alpha (uv)^{\alpha}}{3(u^{\alpha}+v^{\alpha})^2}.
\eeqq
Applying Lemma \ref{lemma_triangle_N4}, we conclude that $(v,v,u)$ is not a linearly stable equilibrium.
\end{proof}

\subsection{Whisker graph}
\label{sec:whisker}
Since we already understand the star-graph setting, let us in this section restrict our attention to whisker graphs that are not star graphs.

For the $(r,s)$-whisker graph (with $r+1+s=n$), $\vec{v}\in \Ea$
if and only if $\vec{v}$ satisfies (for all $i=1,\dots, n$)
\begin{equation}
0=F(\vec{v})_i = -v_i+\frac{1}{n+1}\begin{cases}
1 + \frac{v_i^{\alpha}}{\delta_r}, & i\le r,\\
v_{r+1}^{\alpha}\left[\frac{1}{\delta_r}+\frac{1}{\delta_s}\right], & i=r+1,\\
1 + \frac{v_i^{\alpha}}{\delta_s}, & r+2\le i\le n,
\end{cases}
\label{whisk1}
\end{equation}
where $\delta_r=\sum_{i=1}^{r+1}v_i^{\alpha}$ and $\delta_s=\sum_{i=r+1}^{n}v_i^{\alpha}$.  Fixing $\delta_r$ and repeating the proof of Lemma \ref{lem:star_alpha} with $f$ given by \eqref{fdef}, we have that for any equilibrium $\vec{v}$ on a whisker graph, $\{v_1,\dots, v_r\}$ has at most 2 distinct elements (only one element when $\delta_r\notin\frac{(\alpha-1)}{(n+1)^{\alpha}}(1,2^{\alpha})$).  Similarly $\{v_{r+2},\dots, v_n\}$ has at most 2 distinct elements (only one element when $\delta_s\notin\frac{(\alpha-1)}{(n+1)^{\alpha}}(1,2^{\alpha})$).   From this we obtain the following lemma:
\begin{LEM}
\label{lem:whisker_equilib}
For all $\alpha>1$, all equilibria for a whisker graph are of the form 
\begin{align}
((v)_{k_r},(u)_{r-k_r},v_{r+1},(v')_{k_s},(u')_{s-k_s}).\label{whisker_equilibria}
\end{align}
\end{LEM}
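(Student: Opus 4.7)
The plan is to replicate, on each side of the whisker, the one-variable analysis carried out in the proof of Lemma \ref{lem:star_alpha}. Fix an equilibrium $\vec{v}$ and treat the two quantities $\delta_r = \sum_{i=1}^{r+1} v_i^{\alpha}$ and $\delta_s = \sum_{i=r+1}^{n} v_i^{\alpha}$ as prescribed parameters. The first block of equations in \eqref{whisk1}, namely those for $i \in \{1,\dots,r\}$, then reduces to the single univariate condition $f_r(v_i) = n+1$, where
\begin{equation*}
f_r(x) \;=\; x^{-1}\bigl(1 + \delta_r^{-1} x^{\alpha}\bigr).
\end{equation*}
This is exactly the function studied in the proof of Lemma \ref{lem:star_alpha} (with $\delta$ replaced by $\delta_r$), and the same computation yields $f_r'(x) = x^{-2}\bigl((\alpha-1)\delta_r^{-1}x^{\alpha}-1\bigr)$, which has a unique zero on $(0,\infty)$. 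Hence $f_r$ possesses at most one local extremum, so the equation $f_r(x) = n+1$ has at most two solutions in $(0,1)$, and therefore $\{v_1,\dots,v_r\}$ contains at most two distinct values. An identical argument applied to the third block of equations in \eqref{whisk1}, with $\delta_s$ in place of $\delta_r$, shows that $\{v_{r+2},\dots,v_n\}$ also contains at most two distinct values.

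Since the labelling of the $r$ left-leaves (resp.\ the $s$ right-leaves) among themselves is arbitrary, I would then reorder the coordinates on each side so that the (at most) two values appear as $k_r$ copies of some $v$ followed by $r-k_r$ copies of some $u$ on the left, and $k_s$ copies of $v'$ followed by $s-k_s$ copies of $u'$ on the right. The middle edge $v_{r+1}$ sits alone between the two blocks, yielding exactly the form claimed in \eqref{whisker_equilibria}. The middle-edge equation (the $i=r+1$ case of \eqref{whisk1}) need not be invoked for the statement, since it influences the leaf equations only through $\delta_r$ and $\delta_s$, which have been fixed from the outset.

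The main point — and the only place any work is needed — is the observation that the equations governing the left-leaf coordinates and those governing the right-leaf coordinates decouple once $\delta_r$ and $\delta_s$ are prescribed; this is immediate from the block structure of \eqref{whisk1}. After that, the argument of Lemma \ref{lem:star_alpha} runs verbatim on each side, so I do not anticipate any real obstacle in the proof.
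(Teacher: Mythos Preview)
Your proposal is correct and is essentially identical to the paper's own argument: the paper fixes $\delta_r$ and $\delta_s$, invokes the function $f$ of \eqref{fdef} exactly as in Lemma~\ref{lem:star_alpha} on each side, and concludes that each leaf block has at most two distinct values. The only additional remark in the paper is the parenthetical that only one value occurs when $\delta_r\notin\frac{\alpha-1}{(n+1)^{\alpha}}(1,2^{\alpha})$ (and similarly for $\delta_s$), which you do not need for the bare statement.
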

Note that $v_{r+1}\ge 0$ and all other entries are bounded above and below by $2/(n+1)$ and $1/(n+1)$, respectively.  For such $\vec{v}$, we have that $\delta_r=k_rv^{\alpha}+(r-k_r)u^{\alpha}+v_{r+1}^\alpha$ and similarly  $\delta_s=k_s(v')^{\alpha}+(s-k_s)(u')^{\alpha}+v_{r+1}^\alpha$. 

Letting $\xi_r=\frac{\alpha}{(n+1)\delta_r^2}$ and $\xi_s=\frac{\alpha}{(n+1)\delta_s^2}$ we have that 
\begin{align}
D_{i,i}=&-1+\frac{\alpha}{n+1} \begin{cases}
v_i^{\alpha-1}\left[\frac{\delta_r-v_i^{\alpha}}{\delta_r^2}\right], & i\le r,\\ v_{r+1}^{\alpha-1}\left[\frac{(\delta_r-v_{r+1}^{\alpha})}{\delta_r^2}+\frac{(\delta_s-v_{r+1}^{\alpha})}{\delta_s^2}\right], & i=r+1, \label{Diiwhisker}\\
v_i^{\alpha-1}\left[ \frac{\delta_s-v_i^{\alpha}}{\delta_s^2}\right], & r+2\le i\le n.
 \end{cases}\\ \nonumber
 =&-1+\begin{cases}
 \xi_rv^{\alpha-1}(\delta_r-v^{\alpha}), & i\le k_r,\\
  \xi_ru^{\alpha-1}(\delta_r-u^{\alpha}), & k_r+1\le i\le r,\\
 \xi_r v_{r+1}^{\alpha-1}(\delta_r-v_{r+1}^{\alpha})+\xi_s v_{r+1}^{\alpha-1}(\delta_s-v_{r+1}^{\alpha}), & i=r+1\\
  \xi_s (v')^{\alpha-1}(\delta_s-(v')^{\alpha}), & r+2\le i\le r+2+k_s,\\
    \xi_s (u')^{\alpha-1}(\delta_s-(u')^{\alpha}), & r+2+k_s\le i\le n.
  \end{cases}
\end{align}
Moreover $D_{i,\ell}=0$ if $i\le r$ and $\ell\ge r+2$ (or vice versa) and otherwise
\begin{align*}
D_{i,\ell}=-v_i^{\alpha}v_{\ell}^{\alpha-1}\begin{cases}
	\xi_r, & i,\ell\le r+1, i\ne \ell,\\	
\xi_s, & i,\ell\ge r+1, i\ne \ell.
	\end{cases}
\end{align*}

Now ${\bf M}\equiv{\bf D}-\lambda {\bf I}$ is of the form 
\begin{align*}
{\bf M}=\begin{pmatrix}
{\bf A} & \vline \,\vec{g}\, \vline & 0\\
\hline
\vec{h}^{\mathrm{T}} & \vline \, a\,  \vline& \vec{t}^{\mathrm{T}}\\
\hline
0 & \vline\, \vec{z}\, \vline & {\bf B}
\end{pmatrix},
\end{align*}
where ${\bf A}\in \R^{r\times r}$ has the same form as the matrix ${\bf D}-\lambda {\bf I}$ in the case of the star-graph on $r$ edges, 
\begin{align*}
\vec{g}^{\mathrm{T}}=-\frac{\alpha v_{r+1}^{\alpha-1}}{(n+1)\delta_r^2}(v^{\alpha},\dots,v^{\alpha},u^{\alpha},\dots,u^{\alpha})\in \R^r
\end{align*} 
 \begin{align*}
 \vec{h}^{\mathrm{T}}=-\frac{\alpha v_{r+1}^{\alpha}}{(n+1)\delta_r^2}(v^{\alpha-1},\dots,v^{\alpha-1},u^{\alpha-1},\dots,u^{\alpha-1})\in \R^r
 \end{align*}
and  $a=D_{r+1,r+1}-\lambda$ etc.  We have that 
 \begin{align*}
\vec{g}=&-\xi_r v_{r+1}^{\alpha-1}\vec{x}_r,\qquad\quad\vec{h}=v_{r+1}^{\alpha} \vec{w}_r,
\end{align*}
where $\vec{x}_r$ and $\vec{w}_r$ are defined as in \eqref{uwdef} (but with $\xi_r$ instead of $\xi$), i.e.,
\begin{align}
\vec{x}_r^{\mathrm{T}}=&(v^{\alpha},\dots,v^{\alpha},u^{\alpha},\dots, u^{\alpha}), \quad \text{ and }\label{whiskerudef}\\
\vec{w}_r^{\mathrm{T}}=&-\xi_r( v^{\alpha-1},\dots, v^{\alpha-1}, u^{\alpha-1},\dots, u^{\alpha-1}).\label{whiskerwdef}
\end{align}
Similarly,
 \begin{align*}
\vec{z}=&-\xi_s v_{r+1}^{\alpha-1}\vec{x}'_s,\qquad\quad\vec{t}=v_{r+1}^{\alpha} \vec{w}'_s.
\end{align*}

\begin{LEM}
\label{lem:determinant_whisker}
The determinant of ${\bf M}$ is given by
\begin{align}
\det({\bf M})=&a\det({\bf A})\det({\bf B})-\left(\det(  {\bf B} )\vec{h}^{\mathrm{T}}\adj( {\bf A}  )\vec{g}+\det(  {\bf A} )\vec{t}^{\mathrm{T}}\adj( {\bf B}  )\vec{z}\right).\label{Hdet1}
\end{align}
\end{LEM}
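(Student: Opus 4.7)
The plan is to reduce the block matrix ${\bf M}$ to a bordered matrix by simultaneously moving row $r+1$ and column $r+1$ to the last position, and then to apply the Modified Matrix Determinant Lemma. First, I would perform the simultaneous row-and-column permutation that sends position $r+1$ to position $n$ while preserving the relative order of all other positions. This is a conjugation by a permutation matrix and therefore leaves the determinant unchanged. Moreover, because the two off-diagonal zero blocks of ${\bf M}$ remain zero after the rearrangement, the permuted matrix is precisely
\begin{align*}
{\bf M}'=\begin{pmatrix}
{\bf A} & {\bf 0} & \vec{g} \\
{\bf 0} & {\bf B} & \vec{z} \\
\vec{h}^{\mathrm{T}} & \vec{t}^{\mathrm{T}} & a
\end{pmatrix}=\begin{pmatrix} {\bf C} & \vec{y} \\ \vec{w}^{\mathrm{T}} & a \end{pmatrix},
\end{align*}
where ${\bf C}$ is block diagonal with diagonal blocks ${\bf A}$ and ${\bf B}$, $\vec{y}^{\mathrm{T}}=(\vec{g}^{\mathrm{T}},\vec{z}^{\mathrm{T}})$, and $\vec{w}^{\mathrm{T}}=(\vec{h}^{\mathrm{T}},\vec{t}^{\mathrm{T}})$.

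Next, I would establish the bordered-matrix identity
\begin{align*}
\det\begin{pmatrix} {\bf C} & \vec{y} \\ \vec{w}^{\mathrm{T}} & a \end{pmatrix} = a\det({\bf C}) - \vec{w}^{\mathrm{T}}\adj({\bf C})\vec{y}.
\end{align*}
For invertible ${\bf C}$ this follows directly from the Schur-complement formula $\det({\bf M}')=\det({\bf C})(a-\vec{w}^{\mathrm{T}}{\bf C}^{-1}\vec{y})$ together with $\adj({\bf C})=\det({\bf C}){\bf C}^{-1}$. The general case then follows by the same continuity argument as in the proof of Lemma \ref{lem:MMDL}: replace ${\bf C}$ by ${\bf C}-\varepsilon{\bf I}$, apply the identity, and let $\varepsilon\downarrow 0$, using that both sides are polynomial in the entries of ${\bf C}$.

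Finally, I would exploit the block-diagonal structure of ${\bf C}$ to write $\det({\bf C})=\det({\bf A})\det({\bf B})$ and
\begin{align*}
\adj({\bf C})=\begin{pmatrix}\det({\bf B})\adj({\bf A}) & {\bf 0}\\ {\bf 0} & \det({\bf A})\adj({\bf B})\end{pmatrix},
\end{align*}
the latter being immediate from $\adj=\det\cdot(\cdot)^{-1}$ in the invertible case and extended by continuity. Substituting into the bordered-matrix identity and computing
\begin{align*}
\vec{w}^{\mathrm{T}}\adj({\bf C})\vec{y}=\det({\bf B})\vec{h}^{\mathrm{T}}\adj({\bf A})\vec{g}+\det({\bf A})\vec{t}^{\mathrm{T}}\adj({\bf B})\vec{z}
\end{align*}
yields exactly \eqref{Hdet1}. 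The argument is essentially routine linear algebra; the only mildly subtle point is the extension of the two auxiliary identities to singular blocks, which is handled exactly as in the continuity argument already used in the proof of Lemma \ref{lem:MMDL}.
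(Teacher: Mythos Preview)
Your proposal is correct and follows essentially the same approach as the paper: permute row and column $r+1$ to the last position, use the block-diagonal structure of the resulting principal block ${\bf C}$ (the paper calls it ${\bf R}$) to compute its determinant and adjugate, and combine via a bordered-matrix identity. The only cosmetic difference is that the paper first writes $\det({\bf H})=(a+1)\det({\bf R})-\det\bigl({\bf R}+\vec{y}\vec{w}^{\mathrm{T}}\bigr)$ and then applies Lemma~\ref{lem:MMDL}, whereas you invoke the Schur-complement form $a\det({\bf C})-\vec{w}^{\mathrm{T}}\adj({\bf C})\vec{y}$ directly; both routes land on the same expression after one line.
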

\proof Firstly note that $\det({\bf M})=\det({\bf H})$, where 
\begin{align*}
{\bf H}=\begin{pmatrix}
{\bf A} & {\bf 0} & \vec{g}\\
{\bf 0} & {\bf B} & \vec{z}\\
\vec{h}^{\mathrm{T}} & \vec{t}^{\mathrm{T}} & a  
\end{pmatrix}.
\end{align*}
Let ${\bf R}= \begin{pmatrix}
{\bf A} & {\bf 0} \\
{\bf 0} & {\bf B}\end{pmatrix}$.
Then using the block matrix form of ${\bf H}$,
\begin{align*}
\det({\bf H})=& (a+1)\det({\bf R})-\det\left({\bf R}+{\vec{g} \choose \vec{z}}(\vec{h}^{\mathrm{T}},\vec{t}^{\mathrm{T}})\right).
\end{align*}

Now by definition of $\adj$ we have that ${\bf R}\adj({\bf R})=\det({\bf R}){\bf I}$, from which it follows easily that for ${\bf R}$ of the form 
$\begin{pmatrix} 
{\bf A} & {\bf 0}\\
{\bf 0} & {\bf B}
\end{pmatrix}$ 
\begin{align*}
\adj( {\bf R})=\begin{pmatrix}
\det(  {\bf B} )\adj( {\bf A}  ) & {\bf 0}\\
{\bf 0} & \det(  {\bf A} )\adj( {\bf B}  )
\end{pmatrix}
\end{align*}
Combining this with Lemma \ref{lem:MMDL}, we arrive at
\begin{align*}
\det({\bf H})=& (a+1)\det({\bf R})-\left(\det({\bf R})+(\vec{h}^{\mathrm{T}},\vec{t}^{\mathrm{T}})\adj({\bf R}){\vec{g} \choose \vec{z}}\right)\\
=&a\det({\bf R})-\left(\det(  {\bf B} )\vec{h}^{\mathrm{T}}\adj( {\bf A}  )\vec{g}+\det(  {\bf A} )\vec{t}^{\mathrm{T}}\adj( {\bf B}  )\vec{z}\right).
\end{align*}
But $\det({\bf R})=\det({\bf A})\det({\bf B})$, yielding \eqref{Hdet1}.
\hfill\Qed

Now we know that ${\bf A}$ and ${\bf B}$ can be written in the form ${\bf A}={\bf Z}+\vec{x}_r\vec{w}^{\mathrm{T}}_r$ and ${\bf B}={\bf Z'}+\vec{x}_s'(\vec{w}'_s)^{\mathrm{T}}$ 
and where ${\bf Z}$ and ${\bf Z'}$ are diagonal matrices with
\begin{align*}
Z_{ii}=&-(1+\lambda)+\delta_r\xi_r
\begin{cases}
v^{\alpha-1}, & i\le k_r,\\
u^{\alpha-1}, & k_r<i\le r,
\end{cases}\\
Z'_{ii}=&-(1+\lambda)+\delta_s\xi_s
\begin{cases}
(v')^{\alpha-1}, & i\le k_s,\\
(u')^{\alpha-1}, & k_s<i\le n-r-1,
\end{cases}
\end{align*}
for which $\adj({\bf Z})$ is easy to express.  Indeed,
\begin{align*}
\det({\bf Z})=&(-(1+\lambda)+\delta_r\xi_r v^{\alpha-1})^{k_r}(-(1+\lambda)+\delta_r\xi_r u^{\alpha-1})^{r-k_r},\\
\vec{w}_r^{\mathrm{T}}\adj({\bf Z})\vec{u}_r=& -\xi_r
\left(\sum_{i=1}^{k_r}v^{2\alpha-1}\left[(-(1+\lambda)+\delta_r\xi_r v^{\alpha-1})^{k_r-1}(-(1+\lambda)+\delta_r\xi_r u^{\alpha-1})^{r-k_r}\right]\right.\\
& \quad +\left.\sum_{i=k_r+1}^{r}u^{2\alpha-1}\left[(-(1+\lambda)+\delta_r\xi_r u^{\alpha-1})^{k_r}(-(1+\lambda)+\delta_r\xi_r v^{\alpha-1})^{r-k_r-1}\right]\right),
\end{align*}
and if both $k_r\ge 1$ and $r-k_r\ge 1$ this becomes
\begin{align*}
\vec{w}_r^{\mathrm{T}}\adj({\bf Z})\vec{u}_r=& -\xi_r (-(1+\lambda)+\delta_r\xi_r v^{\alpha-1})^{k_r-1}(-(1+\lambda)+\delta_r\xi_r u^{\alpha-1})^{r-k_r-1}\\
&\quad \Big(k_rv^{2\alpha-1}(-(1+\lambda)+\delta_r\xi_r u^{\alpha-1})+(r-k_r)u^{2\alpha-1}(-(1+\lambda)+\delta_r\xi_r u^{\alpha-1})\Big).
\end{align*}
Similarly
\begin{align*}
\det({\bf Z'})=&(-(1+\lambda)+\delta_s\xi_s v^{\alpha-1})^{k_s}(-(1+\lambda)+\delta_s\xi_s (u')^{\alpha-1})^{s-k_s},\\
\vec{w}_s^{\mathrm{T}}\adj({\bf Z'})\vec{u}_s=& -\xi_s\left(\sum_{i=1}^{k_s}(v')^{2\alpha-1}\left[(-(1+\lambda)+\delta_s\xi_s (v')^{\alpha-1})^{k_s-1}(-(1+\lambda)+\delta_s\xi_s (u')^{\alpha-1})^{s-k_s}\right]\right.\\
& \quad +\left.\sum_{i=k_s+1}^{s}(u')^{\alpha-1}\left[(-(1+\lambda)+\delta_s\xi_s (u')^{\alpha-1})^{k_s}(-(1+\lambda)+\delta_s\xi_s (v')^{\alpha-1})^{s-k_s-1}\right]\right).
\end{align*}

The question is whether we can handle the term of the form $\vec{h}^{\mathrm{T}}\adj( {\bf A}  )\vec{g}$.  However, again by Lemma \ref{lem:MMDL},
\begin{align*}
 \vec{h}^{\mathrm{T}}\adj( {\bf A}  )\vec{g}=\det({\bf A}+\vec{g}\vec{h}^{\mathrm{T}})-\det({\bf A}),
 \end{align*}
and we know what to do with $\det({\bf A})$ as above.  On the other hand, since ${\bf A}={\bf Z}+\vec{u}_r\vec{w}_r$ and $\vec{g}=-\xi_r v_{r+1}^{\alpha-1}\vec{u}_r$ and $\vec{h}=v_{r+1}^{\alpha}\vec{w}_r$,
\begin{align*}
{\bf A}+\vec{g}\vec{h}^{\mathrm{T}}={\bf Z}+\vec{u}_r\vec{w}_r^{\mathrm{T}}-\xi_r v_{r+1}^{2\alpha-1}\vec{u}_r\vec{w}_r^{\mathrm{T}}={\bf Z}+(1-\xi_r v_{r+1}^{2\alpha-1})\vec{u}_r\vec{w}_r^{\mathrm{T}}.
\end{align*}
Thus we can express the determinant of ${\bf A}+\vec{g}\vec{h}^{\mathrm{T}}$ as
\begin{align*}
\det({\bf A}+\vec{g}\vec{h}^{\mathrm{T}})=&\det({\bf Z})+(1-\xi_r v_{r+1}^{2\alpha-1})\vec{w}_r^{\mathrm{T}}\adj({\bf Z})\vec{u}_r\\
=& \det({\bf A})-\xi_r v_{r+1}^{2\alpha-1}\vec{w}_r^{\mathrm{T}}\adj({\bf Z})\vec{u}_r,
\end{align*}
since $\det({\bf A})=\det({\bf Z})+\vec{w}_r^{\mathrm{T}}\adj({\bf Z})\vec{u}_r$.
Since we can do the same with the ${\bf B}$ terms we can write an expression for the determinant in terms of all these quantities.

Recall from \eqref{Diiwhisker} that 
\begin{align*}
a=&-(1+\lambda)+\frac{\alpha v_{r+1}^{\alpha-1}}{n+1}
\left[\frac{(\delta_r-v_{r+1}^{\alpha})}{\delta_r^2}+\frac{(\delta_s-v_{r+1}^{\alpha})}{\delta_s^2}\right]\\
=&-(1+\lambda)+v_{r+1}^{\alpha-1}\big(\xi_r(\delta_r-v_{r+1}^{\alpha})+\xi_s(\delta_s-v_{r+1}^{\alpha})\big),
\end{align*}
where $\delta_r-v_{r+1}^{\alpha}=\sum_{i=1}^rv_i^{\alpha}$ and $\delta_s-v_{r+1}^{\alpha}=\sum_{i=r+2}^n v_i^{\alpha}$.  From \eqref{Hdet1} and the above we have established the following lemma:
\begin{LEM}
\label{lem:determinant_whisker_2}
The determinant of ${\bf M}$ satisfies
\begin{align}
\det({\bf M})=&a\left[\det({\bf Z})+\vec{w}_r^{\mathrm{T}}\adj({\bf Z})\vec{u}_r\right]\left[\det({\bf Z'})+\vec{w}_s^{\mathrm{T}}\adj({\bf Z'})\vec{u}_s\right]\label{twostars}\\
&+\left[\det({\bf Z'})+\vec{w}_s^{\mathrm{T}}\adj({\bf Z'})\vec{u}_s\right]\left[\xi_r v_{r+1}^{2\alpha-1}\vec{w}_r^{\mathrm{T}}\adj({\bf Z})\vec{u}_r\right]\label{extra1} \\
&+\left[\det({\bf Z})+\vec{w}_r^{\mathrm{T}}\adj({\bf Z})\vec{u}_r\right]\left[\xi_s v_{r+1}^{2\alpha-1}\vec{w}_s^{\mathrm{T}}\adj({\bf Z'})\vec{u}_s\right].\label{extra2}
\end{align}
\end{LEM}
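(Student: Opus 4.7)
The plan is to combine Lemma \ref{lem:determinant_whisker} with three applications of the Modified Matrix Determinant Lemma (Lemma \ref{lem:MMDL}), exploiting the fact that every relevant matrix in the whisker setting differs from a diagonal matrix by a single rank-one update. First, I would deal with the ``diagonal'' pieces. Since ${\bf A}={\bf Z}+\vec{x}_r\vec{w}_r^{\mathrm{T}}$ and ${\bf B}={\bf Z'}+\vec{x}_s'(\vec{w}_s')^{\mathrm{T}}$, Lemma \ref{lem:MMDL} applied to each gives
\[
\det({\bf A})=\det({\bf Z})+\vec{w}_r^{\mathrm{T}}\adj({\bf Z})\vec{x}_r,\qquad \det({\bf B})=\det({\bf Z'})+(\vec{w}_s')^{\mathrm{T}}\adj({\bf Z'})\vec{x}_s',
\]
which accounts for the ``$a\det({\bf A})\det({\bf B})$'' piece of \eqref{Hdet1} after expansion.

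The main step is to simplify the adjugate sandwiches $\vec{h}^{\mathrm{T}}\adj({\bf A})\vec{g}$ and $\vec{t}^{\mathrm{T}}\adj({\bf B})\vec{z}$ in \eqref{Hdet1} without ever writing $\adj({\bf A})$ or $\adj({\bf B})$ explicitly. Here I would read Lemma \ref{lem:MMDL} in reverse as the identity
\[
\vec{h}^{\mathrm{T}}\adj({\bf A})\vec{g}=\det({\bf A}+\vec{g}\vec{h}^{\mathrm{T}})-\det({\bf A}).
\]
The crucial observation is that since $\vec{g}=-\xi_r v_{r+1}^{\alpha-1}\vec{x}_r$ and $\vec{h}=v_{r+1}^{\alpha}\vec{w}_r$, the rank-one update $\vec{g}\vec{h}^{\mathrm{T}}=-\xi_r v_{r+1}^{2\alpha-1}\vec{x}_r\vec{w}_r^{\mathrm{T}}$ is proportional to the very update that defines ${\bf A}$, so
\[
{\bf A}+\vec{g}\vec{h}^{\mathrm{T}}={\bf Z}+(1-\xi_r v_{r+1}^{2\alpha-1})\vec{x}_r\vec{w}_r^{\mathrm{T}},
\]
which is itself a rank-one perturbation of the \emph{diagonal} matrix ${\bf Z}$. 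A second application of Lemma \ref{lem:MMDL} then yields $\det({\bf A}+\vec{g}\vec{h}^{\mathrm{T}})=\det({\bf Z})+(1-\xi_r v_{r+1}^{2\alpha-1})\vec{w}_r^{\mathrm{T}}\adj({\bf Z})\vec{x}_r$, and subtracting the already-computed expression for $\det({\bf A})$ collapses the difference to
\[
\vec{h}^{\mathrm{T}}\adj({\bf A})\vec{g}=-\xi_r v_{r+1}^{2\alpha-1}\,\vec{w}_r^{\mathrm{T}}\adj({\bf Z})\vec{x}_r.
\]
An entirely symmetric argument on the $({\bf B},{\bf Z'},\vec{x}_s',\vec{w}_s',\vec{z},\vec{t})$ side produces $\vec{t}^{\mathrm{T}}\adj({\bf B})\vec{z}=-\xi_s v_{r+1}^{2\alpha-1}(\vec{w}_s')^{\mathrm{T}}\adj({\bf Z'})\vec{x}_s'$.

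To finish, I would substitute these two identities, together with the expansions of $\det({\bf A})$ and $\det({\bf B})$ above, into formula \eqref{Hdet1} of Lemma \ref{lem:determinant_whisker}. The two minus signs in \eqref{Hdet1} combine with the two minus signs obtained from the rank-one trick, so the $\xi_r v_{r+1}^{2\alpha-1}$ and $\xi_s v_{r+1}^{2\alpha-1}$ contributions reappear with positive signs multiplied by $\det({\bf B})$ and $\det({\bf A})$ respectively; these are exactly the factors that, when expanded via Lemma \ref{lem:MMDL}, produce the bracketed expressions in lines \eqref{extra1} and \eqref{extra2}, while the $a\det({\bf A})\det({\bf B})$ term becomes line \eqref{twostars}. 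The only real care required is bookkeeping of the signs and of the four vectors $\vec{x}_r,\vec{w}_r,\vec{x}_s',\vec{w}_s'$; I do not anticipate a genuine obstacle, since the parallel rank-one structure of ${\bf A}$, $\vec{g}\vec{h}^{\mathrm{T}}$, ${\bf B}$ and $\vec{z}\vec{t}^{\mathrm{T}}$ reduces every computation to a direct invocation of Lemma \ref{lem:MMDL}.
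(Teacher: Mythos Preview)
Your proposal is correct and follows essentially the same route as the paper: starting from Lemma \ref{lem:determinant_whisker}, expanding $\det({\bf A})$ and $\det({\bf B})$ via Lemma \ref{lem:MMDL}, then handling $\vec{h}^{\mathrm{T}}\adj({\bf A})\vec{g}$ by reading Lemma \ref{lem:MMDL} in reverse and exploiting that $\vec{g}\vec{h}^{\mathrm{T}}$ is a scalar multiple of the rank-one update defining ${\bf A}$ from ${\bf Z}$. The paper carries out exactly this computation in the paragraphs immediately preceding the lemma statement (with $\vec{u}_r$ in place of your $\vec{x}_r$), so there is nothing to add.
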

\subsubsection{Special cases}
If $v_{r+1}=0$ then $a=-(1+\lambda)$, the two terms \eqref{extra1} and \eqref{extra2} vanish and we recover the fact (see Theorem \ref{thm:subgraphs}) that the case $v_{r+1}=0$ is linearly stable if and only if each of the remaining star graphs is linearly stable.

\bigskip

Let us now examine the completely symmetric case $r=s=k_r=k_s$, $v=v'$.
\begin{LEM}
\label{lem:stable_symmetric_whisker}
For the symmetric whisker graph with $r=s=k_r=k_s$,  $\vec{v}=((v)_r,v_{r+1},(v)_r)$ is a linearly stable equilibrium if and only if 
\begin{align*} 
\xi_r v_{r+1}^{\alpha-1}v^{\alpha-1}<1, \quad \text{and in the case $r>1$ also}\quad  \delta_r\xi_r v^{\alpha-1}<1.
\end{align*}
\end{LEM}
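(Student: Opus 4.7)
The plan is to apply Lemma \ref{lem:determinant_whisker_2} directly to the symmetric case and read off the eigenvalues from the resulting factorisation of $\det({\bf M})$. Under the assumptions $r=s=k_r=k_s$ and $v=v'$, there are no $u$ or $u'$ components, and one has $\delta_r=\delta_s=rv^{\alpha}+v_{r+1}^{\alpha}$ and $\xi_r=\xi_s$. The diagonal matrices ${\bf Z}$ and ${\bf Z}'$ collapse to scalar multiples of the identity: ${\bf Z}={\bf Z}'=P\,{\bf I}_r$, where $P:=-(1+\lambda)+\delta_r\xi_r v^{\alpha-1}$. Using $\vec{x}_r=v^{\alpha}\vec{1}$ and $\vec{w}_r=-\xi_r v^{\alpha-1}\vec{1}$, a direct computation gives $\vec{w}_r^{\mathrm{T}}\adj({\bf Z})\vec{x}_r=-r\xi_r v^{2\alpha-1}P^{r-1}$ and hence $\det({\bf A})=P^{r-1}Q$ with $Q:=-(1+\lambda)+\xi_r v^{\alpha-1}v_{r+1}^{\alpha}$ (via $\delta_r-rv^{\alpha}=v_{r+1}^{\alpha}$); an identical expression holds for $\det({\bf B})$.

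Substituting into Lemma \ref{lem:determinant_whisker_2}, the two lines \eqref{extra1} and \eqref{extra2} coincide by symmetry, and with $a=-(1+\lambda)+2r\xi_r v^{\alpha}v_{r+1}^{\alpha-1}$ from \eqref{Diiwhisker} I obtain
\[\det({\bf M})=P^{2(r-1)}Q\Big[aQ-2r\xi_r^{2}v^{2\alpha-1}v_{r+1}^{2\alpha-1}\Big].\]
Expanding the bracket, the $2r\xi_r^{2}v^{2\alpha-1}v_{r+1}^{2\alpha-1}$ terms cancel, leaving
\[aQ-2r\xi_r^{2}v^{2\alpha-1}v_{r+1}^{2\alpha-1}=(1+\lambda)^{2}-(1+\lambda)\xi_r v^{\alpha-1}v_{r+1}^{\alpha-1}\bigl[v_{r+1}+2rv\bigr].\]
The balance equation $2rv+v_{r+1}=1$ on the whisker graph then collapses the bracket, yielding $\det({\bf M})=P^{2(r-1)}Q(1+\lambda)\bigl[(1+\lambda)-\xi_r v^{\alpha-1}v_{r+1}^{\alpha-1}\bigr]$. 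The hard part here is the bookkeeping leading to this factorisation: the cancellation is somewhat delicate and relies crucially on the balance equation to produce the clean $(1+\lambda)$ factor.

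From this product I then read off the eigenvalues: $\lambda=-1$ (always negative); $\lambda=-1+\xi_r v^{\alpha-1}v_{r+1}^{\alpha-1}$ from the last factor; $\lambda=-1+\xi_r v^{\alpha-1}v_{r+1}^{\alpha}$ from $Q=0$; and, if $r>1$, $\lambda=-1+\delta_r\xi_r v^{\alpha-1}$ with multiplicity $2(r-1)$ from $P^{2(r-1)}$. Since $v_{r+1}<1$ (again by the balance equation and $v>0$), the eigenvalue from $Q$ is dominated by the eigenvalue $-1+\xi_r v^{\alpha-1}v_{r+1}^{\alpha-1}$, so it contributes no additional condition. Thus linear stability reduces to the stated requirement $\xi_r v^{\alpha-1}v_{r+1}^{\alpha-1}<1$ for all $r\ge 1$, together with $\delta_r\xi_r v^{\alpha-1}<1$ when $r>1$, which proves the lemma.
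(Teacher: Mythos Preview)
Your proof is correct and follows essentially the same approach as the paper's: both apply Lemma~\ref{lem:determinant_whisker_2} to the symmetric case, factor $\det({\bf M})$ as $P^{2(r-1)}Q(1+\lambda)\bigl[(1+\lambda)-\xi_r v^{\alpha-1}v_{r+1}^{\alpha-1}\bigr]$ via the balance relation $2rv+v_{r+1}=1$, and read off the eigenvalues. Your write-up is in fact slightly cleaner than the paper's in two respects: you keep track of the full multiplicity $2(r-1)$ of the eigenvalue $-1+\delta_r\xi_r v^{\alpha-1}$ in one step, and you make explicit the domination argument (via $v_{r+1}<1$) showing that the eigenvalue $-1+\xi_r v^{\alpha-1}v_{r+1}^{\alpha}$ from $Q=0$ is redundant, a point the paper leaves implicit.
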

\proof  We have that ${\bf Z}={\bf Z'}$ etc., and thus
\begin{align*}
\det({\bf M})=&a\left[\det({\bf Z})+\vec{w}_r^{\mathrm{T}}\adj({\bf Z})\vec{u}_r\right]^2\\
&+2\left[\xi_r v_{r+1}^{2\alpha-1}\vec{w}_r^{\mathrm{T}}\adj({\bf Z})\vec{u}_r\right]\left[\det({\bf Z})+\vec{w}_r^{\mathrm{T}}\adj({\bf Z})\vec{u}_r\right]\\
=&\left[\det({\bf Z})+\vec{w}_r^{\mathrm{T}}\adj({\bf Z})\vec{u}_r\right]\left(a\left[\det({\bf Z})+\vec{w}_r^{\mathrm{T}}\adj({\bf Z})\vec{u}_r\right]+2\left[\xi_r v_{r+1}^{2\alpha-1}\vec{w}_r^{\mathrm{T}}\adj({\bf Z})\vec{u}_r\right]\right).
\end{align*}
Here $\det({\bf Z})=(-(1+\lambda)+\delta_r\xi_r v^{\alpha-1})^{r}$ and 
\begin{align*}
\det({\bf Z})+\vec{w}_r^{\mathrm{T}}\adj({\bf Z})\vec{u}_r
=&(-(1+\lambda)+\delta_r\xi_r v^{\alpha-1})^{r}-r\xi_rv^{2\alpha-1}(-(1+\lambda)+\delta_r\xi_r v^{\alpha-1})^{r-1}\\
=&(-(1+\lambda)+\delta_r\xi_r v^{\alpha-1})^{r-1}\left[(-(1+\lambda)+\delta_r\xi_r v^{\alpha-1})-r\xi_rv^{2\alpha-1}\right]\\
=&(-(1+\lambda)+\delta_r\xi_r v^{\alpha-1})^{r-1}\left[-(1+\lambda)+\xi_r v^{\alpha-1}v_{r+1}^{\alpha}\right],
\end{align*}
so 
\begin{align*}
\lambda=\delta_r\xi_r v^{\alpha-1}-1, \quad \text{and} \quad \lambda=\xi_r v^{\alpha-1}v_{r+1}^{\alpha}-1
\end{align*}
are eigenvalues, with the first of multiplicity $r-1$ (vanishing when $r=1$).

Next
\begin{align*}
a=&-(1+\lambda)+v_{r+1}^{\alpha-1}\big(\xi_r(\delta_r-v_{r+1}^{\alpha})+\xi_s(\delta_s-v_{r+1}^{\alpha})\big)\\
=&   -(1+\lambda)+2rv_{r+1}^{\alpha-1}\xi_rv^{\alpha}, 
\end{align*}
so
\begin{align*}
\det({\bf M})=&		\big(-(1+\lambda)+2rv_{r+1}^{\alpha-1}\xi_rv^{\alpha}\big)				(-(1+\lambda)+\delta_r\xi_r v^{\alpha-1})^{r-1}\left[-(1+\lambda)+\xi_r v^{\alpha-1}v_{r+1}^{\alpha}\right]\\
&-2r\xi_rv^{2\alpha-1}(-(1+\lambda)+\delta_r\xi_r v^{\alpha-1})^{r-1}\xi_r v_{r+1}^{2\alpha-1}\\
=&(-(1+\lambda)+\delta_r\xi_r v^{\alpha-1})^{r-1}\big(1+\lambda\big)\left[(1+\lambda)-\xi_r v_{r+1}^{\alpha-1}v^{\alpha-1}\right],
\end{align*}
where we have used $2rv+v_{r+1}=1$.
The corresponding eigenvalues are
\begin{align*}
\lambda=\delta_r\xi_r v^{\alpha-1}-1, \quad \lambda=-1, \quad \text{and}\quad \lambda=\xi_r v_{r+1}^{\alpha-1}v^{\alpha-1}-1,
\end{align*}
with the former not being present when $r=1$.\Qed

We are now ready to state our main result of this section:
\begin{THM}
\label{thm:symmetric_whisker2}
On the symmetric whisker graph, with $r \ge 1$ there exists $\alpha(r)>1$ such that for any $\alpha>\alpha(r)$ there exist two equilibria of the form $((v)_r,u,(v)_r)$,
 both with $v<u$, exactly one of which is linearly stable.
For the linearly stable equilibrium the function $u(\alpha)$ increases to $u(+\infty)=(r+1)^{-1}$. 
For $\alpha<\alpha(r)$ there do not exist equilibria of the form 
$((v)_r,u,(v)_r)$ with $u>0$ 
\end{THM}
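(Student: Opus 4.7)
The plan is to reduce the equilibrium system for the configuration $((v)_r, u, (v)_r)$ (writing $u$ for the central component $v_{r+1}$) to a single scalar equation, and then combine Lemma \ref{lem:stable_symmetric_whisker} with the geometry of this equation. Using the balance $2rv + u = 1$ and the symmetry $\delta_s = \delta_r$, the centre-edge equation in \eqref{whisk1} gives $\delta_r = 2u^{\alpha-1}/(n+1)$; substituting this into the outer-edge equation and introducing $t := \ln(u/v)$, so that $v = 1/(2r + e^t)$ and $u = e^t/(2r + e^t)$, collapses the system to the single transcendental equation
\begin{equation*}
(\alpha-1)\,t \;=\; h(t) \;:=\; \ln(r+1) - \ln(2 - e^t),\qquad t < \ln 2.
\end{equation*}

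Next I analyse this equation geometrically. One checks $h(0) = \ln(r+1)>0$, $h'(t) = e^t/(2-e^t)$ with $h'(0)=1$, $h''(t) = 2e^t/(2-e^t)^2 > 0$, and $h(t)\uparrow\infty$ as $t\uparrow \ln 2$. So $h$ is strictly convex on $(0,\ln 2)$, starts strictly above the line $y=(\alpha-1)t$ at $t=0$, and diverges at the right endpoint. Minimising $h(t)/t$ on $(0,\ln 2)$ determines a unique tangent point $t_*(r)$ and a unique $\alpha(r)>1$ (in fact $\alpha(r)>2$ since $h'(t_*)>h'(0)=1$) with $\alpha(r)-1 = h(t_*)/t_* = h'(t_*)$. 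For $\alpha>\alpha(r)$ the equation has exactly two solutions $0 < t_-(\alpha) < t_*(r) < t_+(\alpha) < \ln 2$, while for $\alpha<\alpha(r)$ it has none. An elementary check shows $h(t)>0>(\alpha-1)t$ for all $t<0$, ruling out equilibria of this form with $u<v$; hence both equilibria have $v<u$.

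I then translate the stability criterion of Lemma \ref{lem:stable_symmetric_whisker} into this parametrisation. Using $\delta_r = 2u^{\alpha-1}/(n+1)$, the first condition $\xi_r(uv)^{\alpha-1}<1$ rewrites as $e^{(\alpha-1)t} > \alpha(r+1)/2$, and since $e^{(\alpha-1)t} = (r+1)/(2-e^t)$ at any equilibrium this is equivalent to $e^t > 2 - 2/\alpha$. A parallel computation (using $v^{\alpha-1}/\delta_r = 2-e^t$) shows the second condition, relevant only when $r>1$, becomes $e^t > 2 - (n+1)/\alpha$, which is strictly implied by the first since $n+1 = 2(r+1)\ge 4$. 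Thus linear stability is equivalent to the single inequality $e^t > 2 - 2/\alpha$. But $h'(t) > \alpha-1$ rearranges to exactly the same inequality, so stability corresponds geometrically to the convex curve $h$ crossing the line $(\alpha-1)t$ from below to above. At the larger intersection $t_+$ the curve must cross upward (since it lies below the line on $(t_-, t_+)$ and above past $t_+$, by convexity), so $h'(t_+) > \alpha-1$ and $t_+$ gives a linearly stable equilibrium; at $t_-$ the opposite inequality holds, so $t_-$ gives an unstable one.

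The monotonicity and the limiting value follow from the same picture. Implicit differentiation of $(\alpha-1)t_+ = h(t_+)$ yields $t_+'(\alpha) = t_+/(h'(t_+) - (\alpha-1)) > 0$, and since $u(t) = e^t/(2r+e^t)$ is strictly increasing in $t$, the stable branch $u(\alpha)$ is strictly increasing in $\alpha$. As $\alpha\to\infty$, $t_+(\alpha)$ cannot remain bounded away from $\ln 2$ (otherwise $h(t_+)$ would stay bounded while $(\alpha-1)t_+\to\infty$), so $t_+\to\ln 2$ and $u(\alpha)\to 2/(2r+2) = 1/(r+1)$. The main obstacle in this plan is the somewhat tedious algebraic reduction of the two eigenvalue inequalities of Lemma \ref{lem:stable_symmetric_whisker} to the single transparent condition $e^t > 2 - 2/\alpha$; once that translation is in hand, the convexity of $h$ immediately forces the stability dichotomy between $t_-$ and $t_+$.
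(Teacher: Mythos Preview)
Your proof is correct and follows the paper's approach closely: the same substitution $t=\ln(u/v)$ reducing to $(\alpha-1)t=h(t)$, the same convexity analysis yielding $\alpha(r)$ and the two branches $t_\pm$, and the same stability criterion from Lemma~\ref{lem:stable_symmetric_whisker} (the paper phrases the first condition as $\partial t/\partial\alpha>0$, which is exactly your $h'(t)>\alpha-1$). Your verification that the second eigenvalue condition is automatically implied by the first, via the comparison $e^t>2-(n+1)/\alpha$ versus $e^t>2-2/\alpha$, is cleaner than the paper's, which instead rewrites the second condition as $e^{(\alpha-1)t_2}>\alpha/2$, checks it at the tangent point, and then argues by monotonicity in $\alpha$ and $t_2$ separately.
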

\proof 
To establish the existence of such equilibria we need to show that the equation
\beq\label{equation_whisker_u1}
v=\frac{1}{2(r+1)}+\frac{1}{2(r+1)}\frac{v^{\alpha}}{u^{\alpha}+rv^{\alpha}},
\eeq
or, equivalently, 
\beq\label{equation_whisker_v1}
u=\frac{1}{r+1}\frac{u^{\alpha}}{u^{\alpha}+rv^{\alpha}}, 
\eeq
has a solution $u>0$, $v>0$, satisfying $u+2rv=1$. We define $u/v=\e^t$, then $v=1/(2r+\e^t)$ and we find from \eqref{equation_whisker_u1} that
\beq\label{whisker_eqn1}
\frac{1}{2r+\e^t}=\frac{1}{2(r+1)}+\frac{1}{2(r+1)} \cdot \frac{1}{1+\e^{\alpha t}}.
\eeq
Solving this equation for $\e^{\alpha t}$ we obtain
\beq\label{whisker_eqn0}
\e^{\alpha t}=\frac{(r+1) \e^t}{2-\e^t}
\eeq
which is equivalent to
\beq\label{whisker_eqn1b}
(\alpha-1)t=\ln\left(\frac{r+1}{2-\e^t}\right).
\eeq
The function $h_r(t):=\ln((r+1)/(2-\e^t))$ is convex, increasing and strictly positive on $t\in (-\infty,\ln(2))$. The graph of this function is shown in Figure \ref{fig_hr}. Since the function is convex, increasing and $h_r(0)>0$ it is clear that there exists
$\alpha(r)$ such that 
the equation $h_r(t)=(\alpha-1)t$ will have two solutions for $\alpha>\alpha(r)$ and no solutions for $\alpha<\alpha(r)$.  
\begin{figure}
\centering
\captionsetup{width=0.8\textwidth}
\FIGS{\includegraphics[height =6cm]{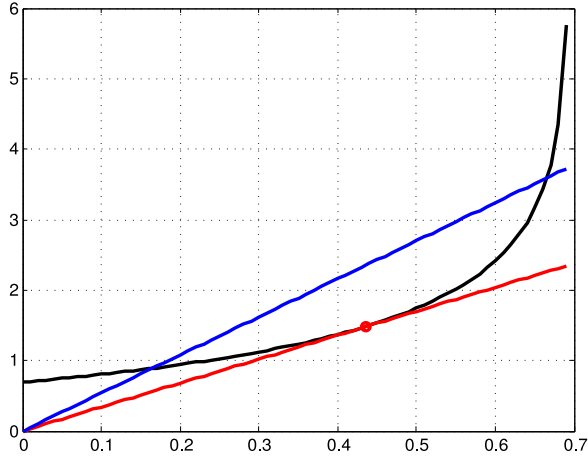}}
\caption{{\small The graph of functions $y=h_r(t)$ (black), $y=(\alpha(r)-1)t$ (red) and
$y=(\alpha-1)t$ for $\alpha>\alpha(r)$ (blue). The point $(t(r), h_r(t(r)))$ is marked by a red circle.}}
\label{fig_hr}
\end{figure}

Let us define for $\alpha>1$
\beq\label{def_ga}
g(\alpha):=(\alpha-1)\ln(2(\alpha-1))-\alpha \ln(\alpha).
\eeq
One can check that $g''(\alpha)>0$, and that $g'(1+)=-\infty$ and $g(+\infty)=+\infty$. The graph of the function 
$\alpha\mapsto g(\alpha)$ is shown in Figure \ref{fig_ga}. For $r\ge 1$ we define $\alpha_*(r)$ to be the unique positive solution to the equation
\beq\label{equation_for_alpha_r}
 g(\alpha)=\ln\left(\frac{1+r}{2}\right), \;
 \textnormal{or equivalently } \; \left[ 2\left(1-\frac{1}{\alpha}\right)\right]^{\alpha-1}=\frac{\alpha}{2}(r+1).
\eeq
  We can see $\alpha(1)$ (the solution to $g(\alpha)=0$) marked by a red circle on Figure
 \ref{fig_ga}. 

\begin{figure}
\centering
\captionsetup{width=0.8\textwidth}
\FIGS{\includegraphics[height =6cm]{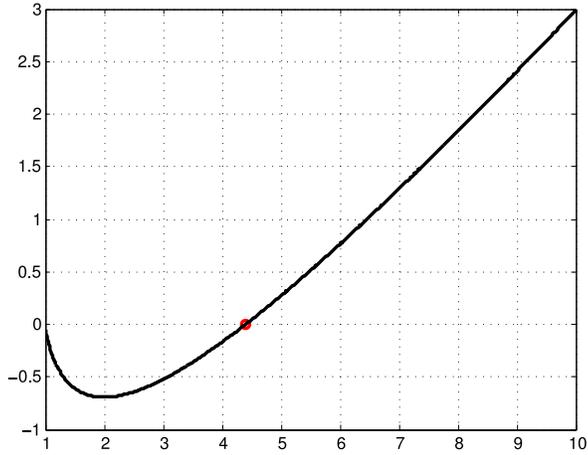}}
\caption{{\small The graph of the function $\alpha\mapsto g(\alpha)$ defined in \eqref{def_ga}.}}
\label{fig_ga}
\end{figure}

Let us show that $\alpha(r)$ satisfies equation \eqref{equation_for_alpha_r} (i.e.~that $\alpha(r)=\alpha_*(r)$. From the graph in Figure \ref{fig_hr} it is clear that $\alpha(r)$ is characterized by the following system of two equations 
 \beqq
 h_r(t)&=&(\alpha-1)t, \qquad\qquad h_r'(t)=\alpha-1. 
 \eeqq
This system expresses the fact that the graph of the straight line $y=(\alpha-1)t$ must be a tangent line to the curve $y=h_r(t)$ at the point of their intersection $t=t(r)$.  From the equation $h_r'(t)=\alpha-1$ we find that 
\beq\label{eqn_tr}
 \e^t=2\left(1-\frac{1}{\alpha}\right)
\eeq 
and substituting this result into the first equation $h_r(t)=(\alpha-1)t$ (or into the equivalent equation 
\eqref{whisker_eqn0}) we obtain \eqref{equation_for_alpha_r}. 

Thus we have now proved that: (i) For $\alpha<\alpha(r)$ there do not exist equilibria of the form $((v)_r,u,(v)_r)$; (ii) For  all $\alpha>\alpha(r)$ the equation \eqref{whisker_eqn1b} has two solutions, $0<t_1(\alpha)<t_2(\alpha)<\ln(2)$, such that $t_1(\alpha)$ is decreasing in $\alpha$ and $t_2(\alpha)$ is increasing in $\alpha$. These 
two solutions give us two equilibria of the form $((v)_r,u,(v)_r)$ (recall that $u=1/(2r+\e^t)$). 

Next, let us investigate stability of these equilibria. According to Lemma \ref{lem:stable_symmetric_whisker}, the equilibrium is linearly stable if and only if 
\beq\label{condition1}
\frac{\alpha (uv)^{\alpha-1}}{2(r+1)(u^{\alpha}+rv^{\alpha})^2}<1, 
\eeq
and in the case $r=1$,
\beq\label{condition2}
\frac{\alpha v^{\alpha-1}}{2(r+1)(u^{\alpha}+rv^{\alpha})}<1. 
\eeq

Applying the same ideas as in the proof of Lemmas \ref{star_graph_lemma_4} and \ref{lemma_triangle_N6} (taking the derivative $\partial /\partial \alpha$ of equation \eqref{whisker_eqn1b}) we check that the inequality
\eqref{condition1} is equivalent to $\frac{\partial t}{\partial \alpha}>0$. One of the two equlibriums that we have found (the one corresponding to the solution $t_1(\alpha)$) is decreasing in $\alpha$, therefore it can not possibly be a stable equilibrium. 
At the same time the second solution $t_2(\alpha)$ is increasing in $\alpha$, therefore the condition \eqref{condition1} is satisfied. Let us look at the remaining condition 
\eqref{condition2}. Using \eqref{equation_whisker_v1}, we see that this is equivalent to 
\beq\label{inequality_lambda2}
\left(\frac{u}{v} \right)^{\alpha-1}=\e^{\sss (\alpha-1)t_2}>\frac{\alpha}{2}. 
\eeq
Recall that we have denoted the unique solution to the equation $h_r(t)=(\alpha(r)-1)t$ by $t(r)$ (see Figure \ref{fig_hr}).
Note that $\alpha>\alpha(r)$ and $t_2>t(r)$. Inequality \eqref{inequality_lambda2} is satisfied when 
$\alpha=\alpha(r)$ and $t_2=t(r)$, since from formulas  \eqref{equation_for_alpha_r} and \eqref{eqn_tr}  it follows that
\beqq
\e^{\sss (\alpha(r)-1)t(r)}=\left[ 2\left(1-\frac{1}{\alpha(r)}\right)\right]^{\alpha(r)-1}=\frac{\alpha(r)}{2} (r+1)>\frac{\alpha(r)}{2}. 
\eeqq
If we increase $\alpha$ (while keeping $t_2=t(r)$ constant),  then the inequality is still true, as the function on the left-hand side increases faster than the function on the right-hand side. Increasing $t_2$ will only increase the left-hand side, while keeping the right-hand side constant, and the required inequality is still true.  Thus we have proved that the second equilibrium (the one with $v/u=\e^{t_2(\alpha)}$) is linearly stable. 
\qed

%
%
%
%

\subsection{Complete graph}
\label{sec:complete}

\begin{THM}\label{thm_complete_graph_1_over_n}
Consider a complete graph on $n_v$ vertices and $n:=n_v(n_v-1)/2$ edges. For $n_v=3$, the equilibrium $\vec{1}/n$ is linearly stable if $\alpha<4/3$ (critical if equality holds), and it is linearly unstable if $\alpha>4/3$. For $n_v\ge 4$, the equilibrium $\vec{1}/n$ is linearly unstable. 
\end{THM}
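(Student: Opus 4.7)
The plan is to compute the Jacobian $\mathbf{D}(\vec{1}/n)$ explicitly and identify its spectrum via the edge-adjacency matrix of the complete graph.

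First I would apply \eqref{D_vec1n}. Under Condition \ref{cond:Vuniform} for $K_{n_v}$ the only nonzero $p_A$ are the $n_v$ vertex-neighbourhoods, each with probability $1/n_v$ and cardinality $d=n_v-1$. Since every edge lies in exactly two such neighbourhoods, and two distinct edges $i\neq k$ share some $A$ iff they share a common endvertex (in which case exactly one $A$ contains them both), \eqref{D_vec1n} gives
\begin{align*}
D_{i,i}(\vec{1}/n) &= -1+\alpha n\cdot 2\cdot\tfrac{1}{n_v}\cdot\tfrac{n_v-2}{(n_v-1)^2}=-1+\tfrac{\alpha(n_v-2)}{n_v-1},\\
D_{i,k}(\vec{1}/n) &= -\tfrac{\alpha n}{n_v(n_v-1)^2}\,\mathbf{1}_{\{i\sim k\}}=-\tfrac{\alpha}{2(n_v-1)}\,\mathbf{1}_{\{i\sim k\}},
\end{align*}
where $i\sim k$ means the edges $i,k$ of $K_{n_v}$ share a vertex. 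Hence
\[
\mathbf{D}(\vec{1}/n)=\Bigl(-1+\tfrac{\alpha(n_v-2)}{n_v-1}\Bigr)\mathbf{I}-\tfrac{\alpha}{2(n_v-1)}\mathbf{M},
\]
where $\mathbf{M}$ is the adjacency matrix of the line graph $L(K_{n_v})$, i.e.\ the triangular graph $T(n_v)$.

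Next I would invoke the classical spectrum of $T(n_v)$ (equivalently the Johnson scheme $J(n_v,2)$): the eigenvalues of $\mathbf{M}$ are $2(n_v-2)$ with multiplicity $1$, $n_v-4$ with multiplicity $n_v-1$, and $-2$ with multiplicity $n_v(n_v-3)/2$. Substituting each into the formula above gives the three eigenvalues of $\mathbf{D}(\vec{1}/n)$:
\begin{align*}
\lambda_1 &= -1, \\
\lambda_2 &= -1+\tfrac{\alpha n_v}{2(n_v-1)} \quad\text{(multiplicity $n_v-1$)},\\
\lambda_3 &= -1+\alpha \quad\text{(multiplicity $n_v(n_v-3)/2$)}.
\end{align*}

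Finally I would read off the stability criterion. For $n_v=3$ the third eigenvalue is absent (multiplicity zero), $\lambda_1=-1<0$, and the stability is governed entirely by $\lambda_2=-1+3\alpha/4$, which is negative iff $\alpha<4/3$ (and zero at $\alpha=4/3$, giving the critical case). For $n_v\geq 4$ the multiplicity of $\lambda_3=-1+\alpha$ is strictly positive, so $\lambda_3>0$ for every $\alpha>1$, and hence $\vec{1}/n$ is linearly unstable. The main technical ingredient is the well-known spectrum of the triangular graph $T(n_v)$; everything else is a direct computation from \eqref{D_vec1n}.
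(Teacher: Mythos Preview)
Your proof is correct and follows essentially the same approach as the paper: expressing $\mathbf{D}(\vec{1}/n)$ as a scalar shift of the line-graph adjacency matrix of $K_{n_v}$ and reading off the eigenvalues from the known spectrum of $T(n_v)$. The only difference is that you invoke the full triangular-graph spectrum and thereby handle $n_v=3$ and $n_v\ge 4$ in one stroke, whereas the paper cites only the eigenvalue $-2$ (of multiplicity $n-n_v$) for the instability when $n_v\ge 4$ and defers the case $n_v=3$ to the earlier detailed analysis of the triangle graph.
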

\begin{proof}
The case of $n_v=3$ (triangle graph) was considered in full detail in Theorem \ref{thm_triangle_main}. Let us assume that $n_v \ge 4$. 
Let $K_{n_v}$ be the complete graph on $n_v$ vertices. We recall that the line-graph $L=L(K_{n_v})$ is defined by considering edges of $K_{n_v}$ as vertices of $L$, and the vertices of $L$ are adjacent if and only if the corresponding edges of $K_{n_v}$ are both incident to some vertex in $K_{n_v}$. 
The equations \eqref{D_vec1n} give us 
\begin{align}
D_{i,j}(\vec{1}/n)=
\begin{cases}
-1+\alpha-\frac{\alpha}{n_v-1}, \;\;\; &{\textnormal{ if }} \; i=j, \\
-\frac{\alpha}{2(n_v-1)}, \;\;\; &{\textnormal{ if }} \; i\ne j,  \text{ and }i,j \text{ are both  incident to some vertex }x,\\
0, \;\;\; & \text{ otherwise.}
\end{cases}
\end{align}
Note that 
\begin{align}
{\bf D}=\left(-1+\alpha-\alpha\frac{1}{n_v-1}\right){\bf I}-\frac{\alpha}{2(n_v-1)} {\bf A},
\end{align}
where ${\bf A}$ is the adjacency matrix of $L$. According to \cite[Corollary 1.4.2]{Brouwer}, the matrix ${\bf A}$ has an eigenvalue $-2$ of degree 
$n-n_v$. This shows that the matrix ${\bf D}$ has an eigenvalue 
\beqq
-1+\alpha-\alpha\frac{1}{n_v-1}-\frac{\alpha}{2(n_v-1)} \times (-2)=-1+\alpha>0
\eeqq
 of multiplicity $n-n_v$, and therefore 
$\vec{1}/n$ is a linearly unstable equilibrium. 
\end{proof}

\subsection{Circle graph}
\label{sec:circle}

\begin{LEM}
The equilibrium $\vec{v}=\vec{1}/n$ is linearly stable if and only if $n$ is odd and $\alpha<\cos\left(\tfrac{\pi}{2n} \right)^{-2}$.
\end{LEM}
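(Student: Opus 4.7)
My plan is to compute the derivative matrix ${\bf D}(\vec{1}/n)$ explicitly and then diagonalize it using the spectral theory of circulant matrices. Under Condition~\ref{cond:Vuniform}, the active subsets $A$ are precisely the $n$ pairs of edges incident to each vertex, all with probability $p_A = 1/n$, and each edge belongs to exactly $2$ of these pairs. Substituting into~\eqref{D_vec1n} with $|A|=2$ throughout gives
\begin{align*}
D_{i,i}(\vec{1}/n) = -1 + \alpha n \cdot 2 \cdot \tfrac{1}{n} \cdot \tfrac{1}{4} = -1 + \tfrac{\alpha}{2},
\end{align*}
while for $k\ne i$, $D_{i,k}(\vec{1}/n) = -\alpha/4$ if the edges $i$ and $k$ share a vertex in $C_n$ (in which case there is exactly one such $A$ containing both), and $D_{i,k}(\vec{1}/n) = 0$ otherwise. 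Hence
\[
{\bf D}(\vec{1}/n) = \bigl(-1+\tfrac{\alpha}{2}\bigr){\bf I} - \tfrac{\alpha}{4}{\bf A},
\]
where ${\bf A}$ is the adjacency matrix of the line graph of $C_n$.

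The key observation is that the line graph of $C_n$ is again $C_n$, so ${\bf A}$ is the circulant adjacency matrix of the cycle on $n$ vertices, whose eigenvalues are the classical values $2\cos(2\pi k/n)$ for $k=0,1,\dots,n-1$. Therefore the eigenvalues of ${\bf D}(\vec{1}/n)$ are
\[
\lambda_k = -1 + \tfrac{\alpha}{2}\bigl(1 - \cos(2\pi k/n)\bigr), \qquad k=0,1,\dots,n-1.
\]
Linear stability is equivalent to $\lambda_k < 0$ for all $k$, i.e.\ $\alpha < 2/(1-\cos(2\pi k/n))$ for every $k\ne 0$, so the binding constraint comes from the $k$ maximizing $1-\cos(2\pi k/n)$.

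Finally I would handle the parity split. If $n$ is even, taking $k=n/2$ gives $\cos(\pi)=-1$, so $\lambda_{n/2}=-1+\alpha$, which is non-negative for $\alpha>1$; hence $\vec{1}/n$ is not linearly stable. If $n$ is odd, the maximum of $1-\cos(2\pi k/n)$ is attained at $k=(n\pm 1)/2$ and equals $1+\cos(\pi/n) = 2\cos^2(\pi/(2n))$ by the half-angle identity. Thus the stability condition collapses to
\[
\alpha < \frac{2}{2\cos^2(\pi/(2n))} = \cos\!\bigl(\tfrac{\pi}{2n}\bigr)^{-2},
\]
and criticality occurs precisely at equality, completing the proof. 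The only mildly non-routine step is recognizing $L(C_n)=C_n$ and invoking the circulant eigenvalue formula; everything else is direct computation plus a half-angle identity.
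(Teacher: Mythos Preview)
Your proof is correct and follows essentially the same route as the paper: both compute ${\bf D}(\vec{1}/n)$ explicitly, recognize it as a circulant matrix (you via the observation $L(C_n)\cong C_n$, the paper by direct inspection of the three nonzero bands), write down the eigenvalues $\lambda_k=-1+\tfrac{\alpha}{2}(1-\cos(2\pi k/n))$, and then handle the even/odd parity split with the same half-angle identity. The line-graph framing is a nice touch but not a genuinely different argument.
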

\proof
For the circle graph with $n$ vertices and edges, we label the edges $\{0,\dots, n-1\}$ around the circle (in the obvious way) and use addition and subtraction $\mod(n-1)$.  Then, $\vec{v}$ is an equilibrium if and only if  
\begin{align}
v_i=\frac{1}{n}\frac{v_i^{\alpha}}{v_i^{\alpha}+v_{i+1}^{\alpha}}+\frac{1}{n}\frac{v_i^{\alpha}}{v_i^{\alpha}+v_{i-1}^{\alpha}}.\label{circle_equilibrium}
\end{align}
Moreover,
\begin{align*}
F(\vec{v})_i=-v_i+\frac{1}{n}\frac{v_i^{\alpha}}{v_i^{\alpha}+v_{i+1}^{\alpha}}+\frac{1}{n}\frac{v_i^{\alpha}}{v_i^{\alpha}+v_{i-1}^{\alpha}}
\end{align*}
has derivatives
\begin{align*}
D_{i,i}(\vec{v})=&-1+\frac{\alpha v_i^{\alpha-1}}{n}\left[\frac{1}{v_i^{\alpha}+v_{i+1}^{\alpha}}+\frac{1}{v_i^{\alpha}+v_{i-1}^{\alpha}}-\frac{v_i^{\alpha}}{(v_i^{\alpha}+v_{i+1}^{\alpha})^2}+\frac{v_i^{\alpha}}{(v_i^{\alpha}+v_{i-1}^{\alpha})^2}\right],\\
D_{i,i+1}(\vec{v})=&-\frac{\alpha v_{i+1}^{\alpha-1}}{n}\frac{v_i^{\alpha}}{(v_i^{\alpha}+v_{i+1}^{\alpha})^2},\\
D_{i,i-1}(\vec{v})=&-\frac{\alpha v_{i-1}^{\alpha-1}}{n}\frac{v_i^{\alpha}}{(v_i^{\alpha}+v_{i-1}^{\alpha})^2},\\
D_{i,k}(\vec{v})=&~0 \quad\text{ otherwise.}
\end{align*}
For $\vec{v}=\vec{1}/n$, these reduce to 
\begin{align*}
D_{i,i}(\vec{1}/n)=&-1+\frac{\alpha}{2},\\
D_{i,i+1}(\vec{1}/n)=&D_{i,i-1}(\vec{1}/n)=-\frac{\alpha}{4},\\ 
D_{i,k}(\vec{1}/n)=&~0 \quad \text{ otherwise.}
\end{align*}
Thus, ${\bf D}$ is a circulant matrix with 3 consecutive $(\!\!\!\!\mod (n-1))$ non-zero entries $-\alpha/4$, $-1+\alpha/2$, $-\alpha/4$.  Therefore its eigenvalues are of the form 
\begin{align*}
\lambda_j=&-1+\frac{\alpha}{2}-\frac{\alpha}{4}\e^{2\pi i j/n} -\frac{\alpha}{4}\e^{-2\pi i j/n}\\
=&-1+\frac{\alpha}{2}-\frac{\alpha}{2}\cos(2\pi j/n),
\end{align*}
for $j=0,\dots, n-1$.
All of these eigenvalues are negative if and only if for every $j=0,\dots,n-1$,
\begin{align*}
\alpha [1-\cos(2\pi j/n)]<2.
\end{align*}
When $n$ is even, the left hand side attains its maximum of $2\alpha$ at $j=n/2$ for which the stability criterion is $\alpha<1$.  When $n$ is odd, the left hand side attains its maximum at $j=(n+1)/2$ for which the stability criterion becomes 
\begin{align*}
\alpha<\frac{2}{1-\cos(\pi(1+1/n))}=\frac{2}{1+\cos(\pi/n)}=\frac{1}{\cos\left(\pi/2n\right)^2},
\end{align*}
where the right hand side is greater than 1.  
\hfill\Qed

Note that for $n=3$, this reduces to $\alpha<2/(1+1/2)=4/3$, which must be the case since for $n=3$ this corresponds to the case of fixed $m=2$, uniform $A_t$ (with $n=3$).  By Theorem \ref{thm:subgraphs}, for $n$ even, the vector $\vec{v}_{\text{alt}}=2(1,0,1,0,\dots,1,0)/n$ is a linearly-stable equilibrium for all $\alpha>1$.

\section{Discussion and open problems}
\label{sec:discussion}
\paragraph{Regarding Conjecture \ref{con:whisker-forest}.}
We have shown that when $G$ is the triangle graph and $\alpha>4/3$, any stable equilibrium has some $v_i=0$.  We believe that the same is true (for $\alpha>\alpha_{\sss G}$) when $G$ is the line graph on 4 edges.  Assuming that this can be verified, it is reasonable to expect that for any fixed $G$, and all $\alpha$ sufficiently large, the only linearly-stable equilibria are those admitted by whisker-forests.

We have shown that for all $\alpha>\alpha(r)$ there is a linearly-stable equilibrium (or a unique equilibrium that is critical) on a symmetric whisker-graph.  We expect that the symmetry property is not needed.  If this can be verified, it would imply that for any $G$, any whisker-forest admits a stable equilibrium for $\alpha$ sufficiently large.  There are a great many problems about WARMs that remain open, among them are the following:

\begin{itemize}
\item[(i)]
Is it true that all $\vec{v}\in \Ea$ for a WARM line graph with $3$ edges are symmetric (i.e.,~that $v_1=v_3$)?
\item[(ii)]
Can one prove non-convergence to linearly-unstable equilibria in our general setting?
\item[(iii)] Is it in our general setting true that $\mc{A}\subset \Sa$ when $\Sa\ne \varnothing$?
\end{itemize}

\paragraph{More general models.} This work is inspired by modelling of the brain. We think of the signal entering, giving rise to our generalized P\'olya 
urn. However, in the brain, signals are transmitted between several neurons, suggesting a model where signals perform a random motion (with or without branching of the signal). Without branching, this could be modelled using edge-reinforced random walks (see e.g., \cite{Dav90, DurKesLim02, Pema88, Pem07, LimTar07, MerRol09} and the references therein) on graphs, killed at certain vertices. With branching, this would give rise to a certain kind of branching reinforced walk with killing. Such problems have attracted substantial attention oven the past decade.

\section*{Acknowledgements}
MH thanks Florina Halasan for helpful discussions regarding Lemma \ref{lem:MMDL}.  The work of RvdH was supported in part by the Netherlands Organisation for Scientific
Research (NWO). Holmes's research was supported in part by the Marsden Fund, administered by RSNZ. A. Kuznetsov acknowledges the support by the Natural Sciences and Engineering Research Council of Canada.

\begin{appendices}
\numberwithin{equation}{section}
\section{- $\quad $Proof of Theorem \ref{thm:convergence}}
\label{sec:appendix}

The proof of Theorem \ref{thm:convergence} follows the proof of \cite[Theorem 1.2]{BBCL12} very closely. We repeat this argument almost exactly, only modifying the expression of the Lyapunov function and some related objects. We have included this material for the sake of completeness. 

The main idea of the proof of Theorem \ref{thm:convergence} is to interpret the evolution of the WARM as a stochastic approximation algorithm (see \cite{Benaim1999}). We introduce several definitions and notations. We recall that 
 $N_t^{\sss (i)}$ denotes the number of balls of colour $i$ at time $t\in \Z^+$, $N_0^{\sss (i)}=1$ and $n$ is the total number of colours.
We assume that $p_{\varnothing}=0$, therefore the total number of balls at time $t$ is $n+t$. 
We denote $X_t^{\sss (i)}:=N_t^{\sss (i)}/(n+t)$ to be the proportion of balls of colour $i$.
We define $C_t^{\sss (i)}$ be the number of balls of colour $i$ which is added to the urn at time $t$, that is
$C_t^{\sss (i)}:=N_{t+1}^{\sss (i)}-N_t^{\sss (i)}$.  
We denote ${\mathcal F}_t:=\sigma\{\vec{N}_u \colon 1\le u  \le t\}$. Note that 
$C_t^{\sss (i)} \in \{0,1\}$ is a Bernoulli random variable, such that
\beq\label{prob_Cti}
{\mathbb P}(C_t^{\sss (i)}=1\mid {\mathcal F}_t)=\sum\limits_{A \colon i\in A} p_A \frac{(X_t^{\sss (i)})^{\alpha}}{\sum\limits_{j \in A} (X_t^{\sss (j)})^{\alpha}},
\eeq
moreover, we have $\sum_{i=1}^n C_t^{\sss (i)}=1$ (since only one ball is added to the urn at time $t$). 
By definition, we have $N_{t+1}^{\sss (i)}=N_t^{\sss (i)}+C_t^{\sss (i)}$, therefore
\beq\label{convergence_eqn_1}
X_{t+1}^{\sss (i)}-X_t^{\sss (i)}=\frac{1}{n+t+1} \left( -X_t^{\sss (i)}+C_{t}^{\sss (i)} \right).
\eeq
Denoting
\beqq
F_i(x_1,x_2,\dots,x_n):=-x_i+\sum\limits_{A \colon i\in A} p_A \frac{x_i^{\alpha}}{\sum\limits_{j \in A} x_j^{\alpha}},
\eeqq
and using \eqref{prob_Cti}, we can rewrite \eqref{convergence_eqn_1} in the form
\beq\label{convergence_eqn}
\vec{X}_{t+1}-\vec{X}_t=\gamma_t ( F(\vec{X}_t)+\vec{u}_t),
\eeq
where $F=(F_1,F_2,\dots,F_n)$, $\gamma_t:=1/(n+t+1)$ and $u_t^{\sss (i)}:=C^{\sss (i)}_t-{\mathbb E}[C^{\sss (i)}_t \mid {\mathcal F}_t]$. 
Formula \eqref{convergence_eqn} expresses the WARM as a stochastic approximation algorithm. This is a classical approach to studying convergence of generalized Polya urns, as there exists a well-developed theory for stochastic approximation algorithms (see
\cite{Benaim1999,chen,Kushner_Yin}). In particular, the result of Theorem \ref{thm:convergence}, (ii) follows at once from 
\eqref{convergence_eqn} and \cite[Proposition 7.5]{Benaim1999}.

We write $A\sqsubset [n]$ when $A\subset [n]$ and $p_A>0$.  Let us denote $c:=\tfrac{1}{2}\min \{ p_A \colon A\sqsubset [n]\}$. 
We define $\Delta$ to be the set of $n$-tuples $(x_1,x_2,\dots,x_n) \in \R^n$ such that 
\begin{enumerate}
\item $x_i \ge 0$ and $\sum_{i=1}^n x_i=1$, and
\item for all $A\sqsubset [n]$ we have $\sum_{i \in A} x_i \ge c$.
\end{enumerate}
Clearly $F: \Delta \mapsto T\Delta$ is Lipschitz. The following lemma is an analogue of \cite[Lemma 3.4]{BBCL12}:
\begin{LEM}\label{lem_positive_invariance}
$\Delta$ is positively invariant under the ODE  $\;\frac{\d \vec{v}(t)}{\d t}=F(\vec{v}(t))$. 
\end{LEM}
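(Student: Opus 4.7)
The plan is to apply the standard tangent-cone (Nagumo) criterion for positive invariance of a closed set $K$ under a Lipschitz ODE $\dot{\vec{v}}=F(\vec{v})$: $\Delta$ is positively invariant provided $F(\vec{x})$ lies in the Bouligand tangent cone of $\Delta$ at every $\vec{x}\in\Delta$. Since $\Delta$ is cut out by the affine equality $\sum_i x_i=1$, the coordinate inequalities $x_i\ge 0$, and the inequalities $g_A(\vec{x}):=\sum_{i\in A}x_i-c\ge 0$ (one per $A\sqsubset [n]$), I will check at every boundary point that each active constraint is either preserved or strictly pushed inward.

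First, the affine constraint is preserved: swapping the order of summation,
\[
\sum_{i=1}^n F_i(\vec{x})=-1+\sum_{A\sqsubset[n]}p_A\sum_{i\in A}\frac{x_i^{\alpha}}{\sum_{j\in A}x_j^{\alpha}}=-1+\sum_{A\sqsubset[n]}p_A=0,
\]
so $F$ is tangent to the simplex everywhere. Next, on a coordinate face $\{x_i=0\}\cap\Delta$, every $A\ni i$ with $p_A>0$ satisfies $\sum_{j\in A}x_j\ge c>0$, so the denominator $\sum_{j\in A}x_j^{\alpha}$ is strictly positive; since $x_i^{\alpha}=0$, this gives $F_i(\vec{x})=0$, and the face is locally invariant.

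The main computation concerns the inequality constraints $g_A\ge 0$. Fix $A\sqsubset[n]$ and a point $\vec{x}\in\Delta$ with $\sum_{i\in A}x_i=c$. Interchanging summations and isolating the $B=A$ term (which contributes $p_A\cdot 1=p_A$ since $\sum_{i\in A}x_i^{\alpha}/\sum_{j\in A}x_j^{\alpha}=1$), while dropping the non-negative terms for $B\ne A$, we obtain
\begin{align*}
\frac{\d}{\d t}\sum_{i\in A}x_i=\sum_{i\in A}F_i(\vec{x})&=-\sum_{i\in A}x_i+\sum_{B\sqsubset[n]}p_B\frac{\sum_{i\in A\cap B}x_i^{\alpha}}{\sum_{j\in B}x_j^{\alpha}}\\
&\ge -c+p_A\ge \tfrac{1}{2}p_A>0,
\end{align*}
where the final inequality uses $c=\tfrac{1}{2}\min_{B\sqsubset[n]}p_B\le\tfrac{1}{2}p_A$ by definition of $c$. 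Thus the flow strictly pushes $g_A$ into $\{g_A>0\}$ whenever this constraint becomes active, and the solution cannot cross this piece of the boundary.

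The three checks together show that $F(\vec{x})$ belongs to the tangent cone of $\Delta$ at every $\vec{x}\in\Delta$, and Nagumo's theorem yields positive invariance. There is no real obstacle; the only mild subtlety is that several constraints can be active simultaneously (for example a coordinate face together with a $g_A=0$ face), but the tangent cone of $\Delta$ at such a point is precisely the intersection of the half-space tangent cones of the individual active constraints, so the conditions verified separately above combine to give membership in the full tangent cone.
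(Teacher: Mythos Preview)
Your proof is correct and follows essentially the same approach as the paper: the paper verifies exactly the same two boundary conditions (that $F_i=0$ on coordinate faces and that $\tfrac{\d}{\d t}\sum_{i\in A}v_i\ge -c+p_A>0$ on the constraint faces $g_A=0$). You are somewhat more careful in explicitly invoking Nagumo's criterion, verifying the affine constraint $\sum_i F_i=0$, and noting how multiple active constraints combine, but the substance is identical.
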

\begin{proof}
If $v$ belongs to the boundary of $\Delta$, then either $v_i=0$ for some $i \in [n]$, or there exists a set $A\sqsubset [n]$
with $\sum_{i \in A} v_i =c$. In the former case, since $F_i(\vec{v})=0$ if $v_i=0$, it is clear that $v(t)$ will stay on the corresponding boundary. Let us consider the latter case. 
Given a set $A$ with $p_A>0$, we have
\beqq
\frac{\d}{\d t} \sum\limits_{i\in A} v_i=\sum\limits_{i \in A} \Big(-v_i+\sum\limits_{B \colon i \in B} p_B
\frac{v_i^{\alpha}}{\sum\limits_{j\in B} v_j^{\alpha}} \Big) \ge 
\sum\limits_{i \in A} \Big(-v_i+ p_A
\frac{v_i^{\alpha}}{\sum\limits_{j\in A} v_j^{\alpha}} \Big)=-\sum\limits_{i \in A} v_i+p_A.
\eeqq
If $v$ is on the boundary of $\Delta$ and there exists a set $A$ such that $\sum_{i \in A} v_i =c$, then
\beqq
\frac{\d}{\d t} \sum\limits_{i\in A} v_i \ge -\sum\limits_{i \in A} v_i+p_A=-c+p_A>0,
\eeqq
which means that $F$ points inward on the boundary of $\Delta$. 
\end{proof}

We recall that ${\mathcal E}={\mathcal E}_{\alpha}$ denotes the set of equilibria of the WARM (the set of solutions to 
$F(\vec{v})=\vec{0}$). 
\begin{DEF}[Strict Lyapunov function] A strict Lyapunov function for a vector field $F$ is a continuous map $L: \Delta \mapsto \R$ which is strictly monotone along any integral curve of $F$ outside of ${\mathcal E}$. In this case, we call $F$ gradient-like.
\end{DEF}

We define a function $L \colon \Delta \mapsto \R$ as
\beq\label{def_Lyap}
L(x_1,x_2,\dots,x_n)=-\sum\limits_{i=1}^n x_i + \frac{1}{\alpha} \sum\limits_{A} p_A \ln \Big(\sum\limits_{j\in A} x_j^{\alpha} \Big).
\eeq
One can check that 
\beq\label{L_main_property}
x_i \frac{\partial L}{\partial x_i}=-x_i+\sum\limits_{A : i \in A} p_A \frac{x_i^{\alpha}}{\sum\limits_{j\in A} x_j^{\alpha}}=F_i(\vec{x})
\eeq
The following result is an analogue of \cite[Lemma 4.1]{BBCL12}:
\begin{LEM}
$L$ is a strict Lyapunov function for $F$. 
\end{LEM}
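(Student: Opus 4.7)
The goal is to show that $L$ defined in \eqref{def_Lyap} is continuous on $\Delta$ and strictly monotone along every integral curve of $F$ that does not sit in $\mc{E}$. The first task is routine: on $\Delta$ the quantities $\sum_{j\in A}x_j^\alpha$ appearing under the logarithm are bounded away from $0$ for every $A\sqsubset[n]$, since by H\"older (as in \eqref{abound}) $\sum_{j\in A}x_j^\alpha\ge|A|^{1-\alpha}(\sum_{j\in A}x_j)^\alpha\ge |A|^{1-\alpha}c^\alpha>0$. Hence $L$ is even $C^1$ on $\Delta$.

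The heart of the argument is to compute $\frac{\d}{\d t}L(\vec{v}(t))$ along a solution of $\dot{\vec v}=F(\vec v)$ and use the identity \eqref{L_main_property}. The chain rule gives
\[
\frac{\d}{\d t}L(\vec{v}(t))=\sum_{i=1}^n \frac{\partial L}{\partial x_i}(\vec{v}(t))\,F_i(\vec{v}(t)).
\]
I would handle the sum index-by-index: when $v_i(t)>0$, \eqref{L_main_property} says $\partial L/\partial x_i=F_i/v_i$, so the $i$-th summand equals $F_i(\vec v(t))^2/v_i(t)\ge 0$; when $v_i(t)=0$, the formula for $F_i$ (which has $v_i^\alpha$ in every numerator and $-v_i$ in front) forces $F_i(\vec v(t))=0$ since $\alpha>1$ and, by the discussion above, every denominator $\sum_{j\in A}v_j^\alpha$ with $p_A>0$ stays positive on $\Delta$. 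Thus that summand is zero as well. Consequently,
\[
\frac{\d}{\d t}L(\vec{v}(t))=\sum_{i:\,v_i(t)>0}\frac{F_i(\vec{v}(t))^2}{v_i(t)}\ge 0.
\]

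To finish, I would observe that this derivative vanishes if and only if $F_i(\vec v(t))=0$ for every $i$ with $v_i(t)>0$; combined with $F_i=0$ on the coordinate hyperplanes $v_i=0$ noted above, this is equivalent to $F(\vec v(t))=\vec 0$, i.e.~$\vec v(t)\in\mc{E}$. Hence $L$ is nondecreasing along every trajectory and strictly increasing as long as the trajectory stays outside $\mc{E}$, which is precisely the definition of a strict Lyapunov function. There is no serious obstacle: the only mildly delicate point is the behaviour on the boundary $\{v_i=0\}$, which is handled by noting that both $F_i$ and the offending term vanish simultaneously thanks to the factor $v_i^\alpha$ (using $\alpha>1$) and the positive-invariance of $\Delta$ from Lemma \ref{lem_positive_invariance}.
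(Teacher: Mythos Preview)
Your proposal is correct and follows essentially the same route as the paper: compute $\frac{\d}{\d t}L(\vec v(t))$ via the chain rule and invoke the identity \eqref{L_main_property}. The only cosmetic difference is that the paper writes the resulting sum as $\sum_i v_i\big(\partial L/\partial x_i\big)^2$ (which is manifestly well-defined and nonnegative even at $v_i=0$, since $\partial L/\partial x_i=-1$ there), whereas you write it as $\sum_{i:v_i>0}F_i^2/v_i$ and treat the boundary indices separately; your extra care with the $v_i=0$ case and with continuity of $L$ on $\Delta$ is sound but not strictly needed once one uses the paper's form of the sum.
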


\begin{proof} Assume that $v(t)=(v_1(t),v_2(t),\dots,v_n(t))$ is an integral curve of $F$, which means that 
$\frac{\d v}{\d t}=F(v)$, then
\beqq
\frac{\d}{\d t} L(v(t))=\sum\limits_{i=1}^n \frac{\partial L}{\partial x_i} \frac{\d v_i}{\d t}=\sum
\limits_{i=1}^n v_i \left(\frac{\partial L}{\partial x_i}\right)^2 \ge 0.
\eeqq
The last expression is zero if and only if $v_i \left(\frac{\partial L}{\partial x_i}\right)^2=0$ for all $i \in[n]$, which is equivalent to $F(v)=0$ (or $v \in {\mathcal E}$). 

\end{proof}

The proof of Theorem \ref{thm:convergence}(i) relies on the following result (see \cite{Benaim1996}, \cite{Benaim1999}
and \cite[Theorem 3.3]{BBCL12}):
\begin{THM}\label{thm_Benaim}
Let $F\colon \R^n\mapsto \R^n$ be a continuous gradient-like vector field with unique integral curves, let ${\mathcal E}$ be its equilibria set, let
$L$ be a strict Lyapunov function, and let $\vec{X}_t$ be a solution to the recursion
\eqref{convergence_eqn}, 
where $(\gamma_t)_{t\ge 0}$ is a decreasing  sequence and $(\vec{u}_t)_{t\ge 0} \subset \R^n$. Assume that 
\begin{itemize}
\item[(i)] $({\vec{X}}_t)_{t\ge 0}$ is bounded,
\item[(ii)] for each $T>0$, 
\beqq
\lim\limits_{t\to +\infty} \Big( \sup\limits_{\{k \colon 0\le \tau_k-\tau_n \le T\}} \Big\| \sum\limits_{i=n}^{k-1} \gamma_i \vec{u}_i  \Big\|\Big)=0,
\eeqq
where $\tau_n=\sum\limits_{i=0}^{n-1} \gamma_i$, and
\item[(iii)] $L({\mathcal E}) \subset \R$ has empty interior.
\end{itemize}
Then the limit set of $(\vec{X}_t)_{t\ge 0}$ is a connected subset of ${\mathcal E}$.
\end{THM}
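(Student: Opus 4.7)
The plan is to recast the recursion \eqref{convergence_eqn} as a stochastic approximation scheme and follow the standard pipeline of \cite{Benaim1999}: pass from the discrete recursion to a continuous-time interpolation, verify the \emph{asymptotic pseudo-trajectory} (APT) property, invoke the general theory that limit sets of bounded APTs are internally chain transitive (ICT), and finally use the strict Lyapunov function $L$ to force ICT sets into $\mathcal{E}$.

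First I would define $\tau_0=0$, $\tau_n=\sum_{i=0}^{n-1}\gamma_i$, and let $\vec{Y}\colon [0,\infty)\to\R^n$ be the piecewise-affine interpolation of $(\vec{X}_n)$ that agrees with $\vec{X}_n$ at $\tau_n$. By hypothesis (i) the trajectory of $\vec{Y}$ stays in a compact set on which $F$ is $K$-Lipschitz. The first (routine) step is to verify, for each $T>0$, the APT estimate
\[
\lim_{\tau\to\infty}\sup_{0\le s\le T}\|\vec{Y}(\tau+s)-\Phi_s(\vec{Y}(\tau))\|=0,
\]
where $\Phi$ denotes the semi-flow of $\dot{\vec{x}}=F(\vec{x})$. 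Setting $e_k=\vec{X}_k-\Phi_{\tau_k-\tau_n}(\vec{X}_n)$ and telescoping \eqref{convergence_eqn}, one obtains
\[
e_k=\sum_{i=n}^{k-1}\gamma_i\bigl[F(\vec{X}_i)-F(\Phi_{\tau_i-\tau_n}(\vec{X}_n))\bigr]+R_{n,k}+\sum_{i=n}^{k-1}\gamma_i\vec{u}_i,
\]
where the Riemann-sum remainder $R_{n,k}$ is $O(T\max_{i\ge n}\gamma_i)$ (which tends to zero since $\gamma_i\downarrow 0$) and the noise term vanishes by hypothesis (ii). Discrete Gronwall with constant $K$ then delivers the APT property.

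Next, by \cite[Theorem 5.7]{Benaim1999}, the limit set $\mathcal{L}:=\bigcap_{T\ge 0}\overline{\{\vec{X}_n:n\ge T\}}$ of a bounded APT is compact, connected, $\Phi$-invariant, and internally chain transitive. Connectedness is already half of the conclusion. For the inclusion $\mathcal{L}\subset\mathcal{E}$ I would use the Lyapunov structure: on any ICT $\Phi$-invariant set $\Lambda$ the function $L$ must be constant, since otherwise one could close up an $(\varepsilon,T)$-pseudo-orbit at a base point along which $L$ strictly increases, contradicting chain transitivity. Hence $L(\Lambda)=\{c\}$ for some $c$; combined with the strict monotonicity of $L$ along any non-equilibrium orbit piece and the $\Phi$-invariance of $\Lambda$, this forces $\Lambda\subset\mathcal{E}\cap L^{-1}(c)$.

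The main obstacle is the last step, where hypothesis (iii) enters in a crucial but somewhat subtle way: without the empty-interior condition on $L(\mathcal{E})$, a connected ICT set could in principle sweep across a continuum of equilibria carrying different values of $L$, so one would only obtain $\mathcal{L}\subset \overline{\mathcal{E}}$ rather than $\mathcal{L}\subset\mathcal{E}$. Condition (iii) pins the fibres of $L|_{\mathcal{E}}$ to be totally disconnected in the image, which, combined with connectedness of $\mathcal{L}$, is what finally upgrades the conclusion. Applying this discussion to $\Lambda=\mathcal{L}$ yields the theorem.
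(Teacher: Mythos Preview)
The paper does not give its own proof of this statement: it is quoted as a black box from \cite{Benaim1996}, \cite{Benaim1999} and \cite[Theorem 3.3]{BBCL12}, and then \emph{applied} to prove Theorem~\ref{thm:convergence}(i). Your proposal is therefore not to be compared with anything in the paper, but with Bena\"im's original argument, and at the structural level you have it right: interpolate, verify the asymptotic pseudo-trajectory property via a Gronwall estimate plus hypothesis (ii), invoke \cite[Theorem~5.7]{Benaim1999} to get that the limit set is compact, connected and internally chain transitive, and then use the Lyapunov function together with (iii) to force the ICT set into $\mathcal{E}$ (this last step is \cite[Proposition~6.4]{Benaim1999}).

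There is, however, a genuine gap in your Lyapunov step. You assert that ``on any ICT $\Phi$-invariant set $\Lambda$ the function $L$ must be constant'' and justify this by an informal pseudo-orbit closing argument, bringing in hypothesis (iii) only afterwards as a cosmetic upgrade. That assertion is false without (iii): take $\Lambda$ to be a connected arc of equilibria on which $L$ varies continuously over an interval; $\Lambda$ is trivially ICT (every point is fixed) yet $L$ is not constant on it. The role of (iii) is not peripheral but central. The correct argument runs in the opposite order: one first shows, using invariance, strict monotonicity of $L$ off $\mathcal{E}$, and a careful chain-recurrence argument, that $L(\Lambda)\subset L(\Lambda\cap\mathcal{E})$; since $L(\Lambda)$ is a compact interval and $L(\mathcal{E})$ has empty interior by (iii), this interval must be a single point; \emph{then} constancy of $L$ on $\Lambda$ together with strict monotonicity off $\mathcal{E}$ forces $\Lambda\subset\mathcal{E}$. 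Your final paragraph also mis-describes what fails without (iii): one does not get $\mathcal{L}\subset\overline{\mathcal{E}}$ either, since an ICT set can contain genuine non-equilibrium orbit segments when $L(\mathcal{E})$ has nonempty interior.
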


\noindent
{\bf Proof of Theorem \ref{thm:convergence}(i).}
Again, the proof follows the proof of \cite[Theorem 1.2]{BBCL12} very closely.
Note that $\gamma_t=1/(n+t+1)$ satisfies 
\beqq
\lim\limits_{t\to +\infty} \gamma_t=0,\qquad {\textnormal{ and }} \qquad \sum\limits_{t\ge 0} \gamma_t=+\infty.
\eeqq
It is obvious from the definition that $(\vec{X}_t)_{t\ge 0}$ is bounded, thus 
condition (i)  of Theorem \ref{thm_Benaim} is satisfied. Let us verify condition (ii). We define
\beqq
\vec{M}_t:=\sum\limits_{s=0}^t \gamma_s \vec{u}_s.
\eeqq
It is clear that $(\vec{M}_t)_{t\ge 0}$ is a martingale adapted to the filtration $({\mathcal F}_t)_{t\ge 0}$. Furthermore, 
since for any $t\ge 0$, 
\beqq
\sum\limits_{s=0}^t {\mathbb E}\left[ \| \vec{M}_{s+1}-\vec{M}_s \|^2 | {\mathcal F}_s\right] \le \sum\limits_{s=0}^t \gamma_{s+1}^2 \le \sum\limits_{s=0}^{\infty} \gamma_{s}^2 < \infty,
\eeqq
the sequence $(\vec{M}_t)_{t\ge 0}$ converges almost surely and in $L^2$ to a finite random vector. In particular, it is a Cauchy sequence and therefore, the condition (ii) holds almost surely. 

Now we need to verify condition (iii) in Theorem \ref{thm_Benaim}. We need to distinguish between equilibria lying in the interior of 
${\mathcal E}$ and those lying on the boundary. For each subset $S\subset [n]$, we define
\beqq
\Delta_S:=\{ v\in \Delta \colon v_i=0 {\textnormal{ iff }} i \notin S\}. 
\eeqq
We see that $\Delta_S$ is a face of $\Delta$, it is also a manifold with corners, and, extending the result of Lemma \ref{lem_positive_invariance}, it is easy to see that $\Delta_S$ is positively invariant under the ODE
$\frac{\d \vec{v}}{\d t}=F(\vec{v})$. 

\begin{DEF}
$\vec{v}\in \Delta_S$ is an $S$-singularity for $L$ if 
\beqq
\frac{\partial L}{\partial v_i}(\vec{v})=0 \; {\textnormal{ for all }} i \in S. 
\eeqq
\end{DEF}
Let ${\mathcal E}_{\sss S} \subset \Delta_S$ denote the set of $S$-singularities for $L$. 

\begin{LEM}
${\mathcal E} = \cup_{S\subset [n]} {\mathcal E}_{\sss S}$.
\end{LEM}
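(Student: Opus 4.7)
The plan is to use the algebraic identity \eqref{L_main_property}, $v_i\,\partial L/\partial v_i(\vec{v})=F_i(\vec{v})$, to match zeros of $F$ with critical points of $L$ restricted to each face $\Delta_S$ of the simplex. For the easy inclusion $\bigcup_S\mathcal{E}_S\subseteq \mathcal{E}$, I would fix $\vec{v}\in \mathcal{E}_S$. Since $\vec{v}\in \Delta_S\subset \Delta$, condition~2 in the definition of $\Delta$ yields $\sum_{j\in A}v_j\ge c>0$, hence $\sum_{j\in A}v_j^{\alpha}>0$, for every $A\sqsubset [n]$; in particular every $F_i(\vec{v})$ is well-defined and $L$ is $C^1$ at $\vec{v}$. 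For $i\in S$, \eqref{L_main_property} gives $F_i(\vec{v})=v_i\,\partial L/\partial v_i(\vec{v})=0$ directly from the assumption $\partial L/\partial v_i(\vec{v})=0$. For $i\notin S$, $v_i=0$ and, since $\alpha>1$, both the $-v_i$ term and every $v_i^\alpha$ in the numerators vanish, so $F_i(\vec{v})=0$.

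For the reverse inclusion $\mathcal{E}\subseteq \bigcup_S\mathcal{E}_S$, I would take $\vec{v}\in \mathcal{E}$ and define $S=\{i\in [n]:v_i>0\}$. The key preliminary step is to verify that $\vec{v}\in \Delta_S$, which amounts to checking condition~2 of $\Delta$. This follows by summing the equilibrium equation $F(\vec{v})=\vec{0}$ over $i\in A'$ for any $A'\sqsubset [n]$ and isolating the summand corresponding to $A=A'$, which contributes exactly $p_{A'}$; the remaining summands are non-negative, giving $\sum_{i\in A'}v_i\ge p_{A'}\ge 2c$. This is essentially the argument already used in the proof of Theorem~\ref{thm:number_equi}. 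Once $\vec{v}\in \Delta_S$, applying \eqref{L_main_property} to each $i\in S$ (where $v_i>0$) yields $\partial L/\partial v_i(\vec{v})=F_i(\vec{v})/v_i=0$, so $\vec{v}\in \mathcal{E}_S$.

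The only real obstacle is ensuring well-definedness of $F$ and the partials of $L$ at boundary points of $\Delta$, which could a priori involve indeterminate forms $0/0$ in the denominators $\sum_{j\in A}v_j^{\alpha}$. The minimum-probability lower bound built into condition~2 of $\Delta$ is precisely what rules this out, after which the lemma reduces to the one-line algebraic content of \eqref{L_main_property}.
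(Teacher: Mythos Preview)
Your proposal is correct and follows essentially the same approach as the paper: both rest on the identity $v_i\,\partial L/\partial v_i=F_i$ from \eqref{L_main_property}, which immediately reduces $F(\vec{v})=\vec{0}$ to the condition that for each $i$ either $v_i=0$ or $\partial L/\partial v_i(\vec{v})=0$. You are more careful than the paper in explicitly verifying that an equilibrium $\vec{v}$ lies in $\Delta$ (condition~2), recycling the bound $\sum_{i\in A}v_i\ge p_A$ from the proof of Theorem~\ref{thm:number_equi}; the paper's proof leaves this implicit.
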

\begin{proof}
$\vec{v} \in {\mathcal E}$ means that $F(\vec{v})=0$, and due to \eqref{L_main_property} this is equivalent to 
$v_i \frac{\partial L}{\partial v_i}=0$. Therefore, $\vec{v} \in {\mathcal E}$ implies that for all $i \in [n]$,  either $v_i=0$ or $\frac{\partial L}{\partial v_i}=0$. 
\end{proof}

In order to check condition (iii) of Theorem \ref{thm_Benaim}, we need to show that $L({\mathcal E})=0$. 
For any $S\subset [n]$, the function $L$ restricted to $\Delta_S$ is a $C^{\infty}$ function, thus by Sard's theorem 
$L({\mathcal E}_{\sss S})$ has zero Lebesgue measure, which implies that $L({\mathcal E})$ has zero Lebesgue measure, which in turn implies that 
$L({\mathcal E})$ has empty interior. This verifies condition (iii) in Theorem \ref{thm_Benaim}, and ends the proof of 
Theorem \ref{thm:convergence}(i).
\qed

\end{appendices}

\bibliographystyle{plain}

\end{document}